\begin{document}
\baselineskip=14pt

\newtheorem{defin}{Definition}[section]
\newtheorem{Prop}{Proposition}
\newtheorem{teo}{Theorem}[section]
\newtheorem{ml}{Main Lemma}
\newtheorem{con}{Conjecture}
\newtheorem{cond}{Condition}
\newtheorem{conj}{Conjecture}
\newtheorem{prop}[teo]{Proposition}
\newtheorem{lem}{Lemma}[section]
\newtheorem{rmk}[teo]{Remark}
\newtheorem{cor}{Corollary}[section]
\renewcommand{\theequation}{\thesection .\arabic{equation}}

\newcommand{\beq}{\begin{equation}}
\newcommand{\eeq}{\end{equation}}
\newcommand{\beqn}{\begin{eqnarray}}
\newcommand{\beqnn}{\begin{eqnarray*}}
\newcommand{\eeqn}{\end{eqnarray}}
\newcommand{\eeqnn}{\end{eqnarray*}}
\newcommand{\bprop}{\begin{prop}}
\newcommand{\eprop}{\end{prop}}
\newcommand{\bteo}{\begin{teo}}
\newcommand{\bcor}{\begin{cor}}
\newcommand{\ecor}{\end{cor}}
\newcommand{\bcon}{\begin{con}}
\newcommand{\econ}{\end{con}}
\newcommand{\bcond}{\begin{cond}}
\newcommand{\econd}{\end{cond}}
\newcommand{\bconj}{\begin{conj}}
\newcommand{\econj}{\end{conj}}
\newcommand{\eteo}{\end{teo}}
\newcommand{\brm}{\begin{rmk}}
\newcommand{\erm}{\end{rmk}}
\newcommand{\blem}{\begin{lem}}
\newcommand{\elem}{\end{lem}}
\newcommand{\ben}{\begin{enumerate}}
\newcommand{\een}{\end{enumerate}}
\newcommand{\bei}{\begin{itemize}}
\newcommand{\eei}{\end{itemize}}
\newcommand{\bdf}{\begin{defin}}
\newcommand{\edf}{\end{defin}}
\newcommand{\bpr}{\begin{proof}}
\newcommand{\epr}{\end{proof}}

\newcommand{\halmos}{\rule{1ex}{1.4ex}}
\def \qed {{\hspace*{\fill}$\halmos$\medskip}}

\newcommand{\fr}{\frac}
\newcommand{\Z}{{\mathbb Z}}
\newcommand{\R}{{\mathbb R}}
\newcommand{\E}{{\mathbb E}}
\newcommand{\C}{{\mathbb C}}
\renewcommand{\P}{{\mathbb P}}
\newcommand{\N}{{\mathbb N}}
\newcommand{\var}{{\mathbb V}}
\renewcommand{\S}{{\mathcal S}}
\newcommand{\T}{{\mathcal T}}
\newcommand{\W}{{\mathcal W}}
\newcommand{\X}{{\mathcal X}}
\newcommand{\Y}{{\mathcal Y}}
\newcommand{\h}{{\mathcal H}}
\newcommand{\f}{{\mathcal F}}
\newcommand{\ex}{{\mathcal E}}

\renewcommand{\a}{\alpha}
\renewcommand{\b}{\beta}
\newcommand{\g}{\gamma}
\newcommand{\G}{\Gamma}
\renewcommand{\L}{\Lambda}
\renewcommand{\l}{\lambda}
\renewcommand{\d}{\delta}
\newcommand{\D}{\Delta}
\newcommand{\e}{\epsilon}
\newcommand{\s}{\sigma}
\newcommand{\B}{{\mathcal B}}
\renewcommand{\o}{\omega}

\newcommand{\nn}{\nonumber}
\renewcommand{\=}{&=&}
\renewcommand{\>}{&>&}
\newcommand{\<}{&<&}
\renewcommand{\le}{\leq}
\newcommand{\+}{&+&}

\newcommand{\pa}{\partial}
\newcommand{\ffrac}[2]{{\textstyle\frac{{#1}}{{#2}}}}
\newcommand{\dif}[1]{\ffrac{\partial}{\partial{#1}}}
\newcommand{\diff}[1]{\ffrac{\partial^2}{{\partial{#1}}^2}}
\newcommand{\difif}[2]{\ffrac{\partial^2}{\partial{#1}\partial{#2}}}

\parindent=0cm
\def\8{\infty}
\def\la{\lambda}
\newcommand{\eps}{\varepsilon}
\newcommand{\cvlaw}{\stackrel{\rm law}{\longrightarrow}}
\newcommand{\eqlaw}{\stackrel{\rm law}{=}}
\newcommand{\uu}{{\tt u}}

\title{
Last passage percolation  and traveling fronts 
}


\author{Francis Comets$^{1,4}$, Jeremy Quastel$^{2}$ and Alejandro F. Ram\'\i rez$^{3,4}$}

\thanks{ AMS 2000 {\it subject classifications}. Primary  
60K35, 82C22;
 secondary 
 60G70, 82B43.}

\thanks{{\it Key words and phrases.} 
Last passage percolation, Traveling wave, Interacting Particle Systems, Front propagation,
Brunet-Derrida correction.}

\thanks{$^1$Universit\'e Paris Diderot. Partially supported by CNRS, Laboratoire de Probabilit\'es et Mod\`eles
 Al\'eatoires,
UMR 7599.}

\thanks{$^2$University of Toronto. Partially supported by NSERC, Canada}

\thanks{$^3$Pontificia Universidad Cat\'olica de Chile. Partially supported by Fondo Nacional de Desarrollo Cient\'\i fico
y Tecnol\'ogico grant 1100298}

\thanks{$^4$Partially supported by ECOS-Conicyt grant CO9EO5}

\address[Francis Comets]{Universit\'e Paris Diderot - Paris 7\\
Math\'ematiques, case 7012\\
F-75 205 Paris Cedex 13, France}
\email{comets@math.univ-paris-diderot.fr}

\address[Jeremy Quastel]{Departments of Mathematics and Statistics\\
University of Toronto\\
40 St. George Street\\
Toronto, Ontario M5S 1L2, Canada}
\email{quastel@math.toronto.edu}
\address[Alejandro F. Ram\'\i rez]{Facultad de Matem\'aticas\\
Pontificia Universidad Cat\'olica de Chile\\
Vicu\~na Mackenna 4860, Macul\\
Santiago, Chile}

\email{aramirez@mat.puc.cl}
\bigskip

\date{January 28, 2013}
\maketitle


\begin{abstract} We consider a system of $N$ particles with a stochastic 
dynamics introduced by Brunet and Derrida \cite{BD04}. The particles can be interpreted as last passage times in directed percolation on $\{1,\dots,N\}$ of mean-field type.
The particles remain grouped and  
move like a traveling 
front, subject to discretization and driven by a random noise. As $N$ increases, we obtain estimates for the speed of the front and its profile, for different laws of the driving noise. As shown in \cite{BD04}, the model
with Gumbel distributed jumps has a simple structure. We establish  that the scaling limit is a 
L\'evy process in this case.
We study other jump distributions. We  prove a result showing that the limit for large $N$ is stable under small perturbations of the Gumbel.
In the opposite case of bounded jumps,  a completely different behavior is found, where  finite-size corrections are extremely small.
\end{abstract} 


\section{Definition of the model } \label{sec:def}
We consider the following stochastic process introduced by Brunet 
and Derrida \cite{BD04}. It consists in a fixed number $N \geq 1$
of particles on the real line, 
initially  at the positions
$X_1(0),\ldots, X_N(0)$. With $\{ \xi_{i,j}(s):1\le i,j\le N, s \geq 1\}$ an  i.i.d.~family of real random 
variables, the positions evolve as
\begin{equation}
  \label{eq:defXi}
  X_i(t+1)=\max_{1\le j\le N}\big\{X_j(t)+\xi_{i,j}(t+1)\big\}.
\end{equation}
The components of the $N$-vector $X(t)=(X_i(t), 1 \leq i \leq N)$
are not ordered.
The vector $X(t)$
 describes the location 
after the $t$-th step of a population under reproduction, mutation and selection keeping
the size constant. Given the current positions of the population, the next  positions
are a $N$-sample of the maximum of the full set of previous ones evolved by an independent step.
It can be also viewed as long-range directed polymer 
in random medium with $N$ sites in the transverse direction,
\begin{equation}  \label{eq:Xipolymer}
  X_i(t)=\max \big\{X_{j_0}(0)+ \sum_{s=1}^{t}\xi_{j_{s},j_{s-1}}(s);
1\le j_s \le N \;\forall s =0,\ldots t-1, j_{t}=i
\big\},
\end{equation}
as can be checked by induction ($1\leq i \leq N$). The model is long-range since the maximum in 
(\ref{eq:defXi}) ranges over all $j$'s. For comparison with a short-range model,
taking  the maximum over $j$ neighbor of $i$
in $\mathbb Z$ in (\ref{eq:defXi})  would define the standard oriented last passage percolation model with
passage time $\xi$ on edges in two dimensions.
\medskip

By the selection mechanism, the $N$ particles remain grouped even when $N \to \infty$, they are 
essentially pulled by the leading ones,
and the global motion is similar to a front propagation in reaction-diffusion equations
with traveling waves. Two ingredients of major interest are:
(i) the discretization effect of a finite $N$, (ii) the presence of
a random noise in the evolution. Such fronts are of interest, but poorly understood; see \cite{panja}
for a survey from a physics perspective.
\medskip

Traveling fronts appear in mean-field models for random growth. This was discovered 
by Derrida and Spohn \cite{DeSp88} for directed polymers in random medium on the tree,
and then extended to other problems \cite{MajKrap02, MonthusGarel}. 

\medskip

The present model was introduced by Brunet and Derrida in \cite{BD04} to compute the corrections
for large but finite system size to some continuous limit equations in front propagation.  
Corrections are due to finite size, quantization and stochastic effects.
They predicted, for a large class of such models  where the front is pulled by the farmost particles \cite{BD04, BDMM06}, that the motion and the particle structure have universal features, depending on just a few parameters related to the upper tails. 
Some of these predictions have been rigorously proved in specific contexts, such as the corrections to the speed of the Branching Random Walk 
(BRW) under the effect of a  selection \cite{BeGo10}, of the solution to  KPP equation with a small stochastic noise \cite{MuMyQu11}, or the genealogy of  branching Brownian motions with selection
\cite{BeBeSc11}.
For the so-called $N$-BBM (branching Brownian motion with killing of leftmost particles 
to keep the population size constant and equal to $N$)
the renormalized  fluctuations for the position of the killing barrier converge to a Levy process as $N$ diverges  \cite{Ma11b}.

\medskip

We mention other related references. For a continuous-time model with mutation and selection  conserving the total mass, the empirical measure converges to a free boundary problem
with a convolution kernel \cite{DR10}. Traveling waves are given by a Wiener-Hopf equation.
For a different model mimicking competition between infinitely many competitors,  called Indy-500,
quasi-stationary probability measures for competing particles seen from the leading edge  corresponds to a superposition of Poisson processes \cite{AizRuz04}. For diffusions interacting through their rank,
the spacings are tight \cite{PalPit08}, and the self-normalized exponential converge to a Poisson-Dirichlet  law \cite{CP10}.
In \cite{BRT}, particles  jump forward at a rate depending on their relative position with respect to the 
center of mass, with a higher rate for the particle behind: convergence to a traveling front is proved, 
which is given in some cases by the Gumbel distribution.

\medskip

We now give a flavor  of our results.
The Gumbel law ${\rm G}(0,1)$ has  distribution function 
$\P(\xi \leq x)= \exp\ -e^{-x}, x \in \R$. In \cite{BD04} it is shown that an appropriate measure
of the front location of a state $X \in \R^N$ in this case is
$$
\Phi(X)= \ln \sum_{1 \leq j \leq N} e^{X_j}\;,
$$
and that $\Phi(X(t))$ is a random walk, a feature which simplifies the analysis.
For an arbitrary distribution of $\xi$, the speed of the front with $N$ particles can be defined as the almost sure limit
$$
v_N = \lim_{t \to \8} t^{-1} \Phi(X(t))\;.
$$
We emphasize that $N$ is fixed in the previous formula, though it is sent to infinity in the next 
result.
Our first result is the scaling limit as the number $N$ of particles diverges.
\begin{teo} \label{cor:mettre en intro}     
Assume  $\xi_{i,j}(t)  \sim {\rm G}(0,1)$. Then, for all sequences $m_N \to \8$ as $N \to \8$,
$$
\frac{ \Phi(X([m_N\tau])) - \beta_N m_N \tau }{m_N/\ln N}  \cvlaw \S(\tau)
$$
in the Skorohod topology with $\S(\cdot)$ a totally asymmetric Cauchy process with L\'evy
exponent $\psi_C$ from (\ref{eq:exponent}),
where
$$
\beta_N = \ln b_N + Nb_N^{-1} \ln m_N ,
$$
with $\ln b_N=   \ln N + \ln \ln N - \frac{\gamma}{\ln N} + {\mathcal O}(\frac{1}{\ln^2 N} )$, see 
(\ref{eq:b_N}).
\end{teo}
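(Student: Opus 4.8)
The plan is to exploit the algebra of the Gumbel law to reduce the statement to a functional limit theorem for a random walk, and then to pin the step distribution down precisely enough to read off the Cauchy exponent and the centering. First I would record the max-stability identity: if $\zeta_1,\dots,\zeta_N$ are i.i.d.\ ${\rm G}(0,1)$ and $a_1,\dots,a_N\in\R$, then $\max_j\{a_j+\zeta_j\}\eqlaw {\rm G}\big(\ln\sum_j e^{a_j},1\big)$. Applied conditionally on $X(t)$ in \eqref{eq:defXi}, this shows that $X_i(t+1)-\Phi(X(t))$, $1\le i\le N$, are i.i.d.\ ${\rm G}(0,1)$ (they are functions of the disjoint families $\{\xi_{i,\cdot}(t+1)\}_i$), whence $\Phi(X(t+1))-\Phi(X(t))=\ln\sum_i e^{X_i(t+1)-\Phi(X(t))}$ is distributed as $W:=\ln\sum_{i=1}^N e^{\zeta_i}$. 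Thus $\Phi(X(t))=\Phi(X(0))+\sum_{s\le t}W_s$ with $W_s$ i.i.d.\ copies of $W$ --- the fact already used in \cite{BD04} --- and the theorem becomes a functional CLT for these partial sums.

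Next I would analyse the law of $W$. Since $e^{\zeta}\eqlaw 1/{\mathcal E}$ with ${\mathcal E}$ standard exponential, $\sum_i e^{\zeta_i}\eqlaw S_N:=\sum_{i=1}^N 1/{\mathcal E}_i$ and $\E[e^{-\lambda S_N}]=(2\sqrt\lambda\,K_1(2\sqrt\lambda))^N$, with $K_1$ the modified Bessel function; the small-argument expansion $2\sqrt\lambda K_1(2\sqrt\lambda)=1+\lambda\ln\lambda+(2\gamma-1)\lambda+O(\lambda^2\ln^2\lambda)$ gives $N\ln\E[e^{-\lambda S_N}]=N\lambda\ln\lambda+(2\gamma-1)N\lambda+o(1)$ on the scale $\lambda\asymp 1/b_N$, which fixes $b_N$ through \eqref{eq:b_N}, yields $S_N/b_N\to1$ in probability and $\ln b_N=\ln N+\ln\ln N-\gamma/\ln N+O(\ln^{-2}N)$. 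To get the characteristic function of $W=\ln S_N$ I would use $\E[S_N^{z}]=\frac{z}{\Gamma(1-z)}\int_0^\infty\lambda^{-z-1}(1-\E[e^{-\lambda S_N}])\,d\lambda$, continued to $z=i\theta$; substituting $\lambda=\mu/b_N$, controlling the large-$\mu$ tail by the exponential decay of $K_1$ and the small-$\mu$ part by the expansion above, one localizes to get
$$
\E\big[e^{i\theta W}\big]=b_N^{\,i\theta}\exp\!\Big(\tfrac{1}{\ln b_N}\,\psi_C(\theta)+o(\tfrac1{\ln N})\Big),
$$
where $\psi_C$ is the exponent in \eqref{eq:exponent}; its leading form $\psi_C(\theta)\sim-\tfrac\pi2|\theta|-i\theta\ln|\theta|$ is that of a totally asymmetric Cauchy process, the logarithmic $1$-stable anomaly coming precisely from the $\mu\ln\mu$ term above.

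Then I would pass to the limit. By independence,
$$
\E\Big[\exp\Big(i\theta\tfrac{\ln N}{m_N}\big(\textstyle\sum_{s=1}^{[m_N\tau]}W_s-\beta_N m_N\tau\big)\Big)\Big]=\exp\Big(-i\theta\tau\ln N\,(\beta_N-\ln b_N)+\tfrac{[m_N\tau]}{\ln b_N}\,\psi_C\big(\tfrac{\theta\ln N}{m_N}\big)+o(1)\Big).
$$
Under the rescaling $\theta\mapsto\theta\ln N/m_N$ followed by the $[m_N\tau]$-fold convolution, $\psi_C$ picks up an anomalous drift proportional to $\theta\tau\ln m_N$ ($1$-stable laws not being strictly stable); the role of the correction $\beta_N-\ln b_N=Nb_N^{-1}\ln m_N$ is exactly to cancel it, leaving $\tau\psi_C(\theta)$, the exponent of $\S(\tau)$. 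Equivalently, in Lévy--Khintchine terms, for the null i.i.d.\ array $Y_{N,s}=\frac{\ln N}{m_N}(W_s-\beta_N)$, $1\le s\le[m_N\tau]$, I would check: $[m_N\tau]\,\P(Y_{N,1}>y)\to\tau/y$ and $[m_N\tau]\,\P(Y_{N,1}<-y)\to0$ for $y>0$ (from the tail estimate $\P(W-\ln b_N>x)\sim\big((e^x-1)\ln N\big)^{-1}$); that the truncated variances tend to $0$ with the truncation level; and that the truncated means converge --- this last point being where the precise form of $\beta_N$, and the comparison between $m_N$ and $\ln N$, really enters. This gives convergence of the finite-dimensional distributions, and tightness in the Skorohod topology on $D([0,T])$ then follows from the classical criterion for convergence of partial-sum processes of a null array to a Lévy process.

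The hardest part will be the asymptotic analysis behind the displayed characteristic function: one needs a one-term expansion of $\E[e^{i\theta\ln S_N}]$ with error $o(1/\ln N)$ that is \emph{uniform} over the frequency window $\theta\sim\ln N/m_N$ actually used (this window shrinks or grows according to how $m_N$ grows relative to $\ln N$, so the bound must be controlled on both small and large frequencies), and one must then match the resulting $1$-stable drift against the centering $\beta_N$ sharply enough to land on the Cauchy exponent on the nose. The source of the difficulty is that $e^{\zeta}$ has infinite mean, so $\ln S_N$ --- although it has exponential tails --- sits exactly at the borderline where the logarithmic $1$-stable correction is produced, and every error term has to be tracked at that precision.
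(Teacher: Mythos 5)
Your reduction to the i.i.d.\ walk with step $W=\ln\sum_{i}\ex_i^{-1}$ is exactly the paper's starting point (Theorem \ref{th:gumbel}), and closing the argument through a null-array/L\'evy--Khintchine criterion parallels the paper's appeal to \cite{Jacod} in Theorem \ref{th:levy}; the difference is that you try to replace the soft input (the stable limit (\ref{eq:cvstable}) after the parameter rescaling (\ref{eq:a_N})) by a sharp one-step expansion of $\E[e^{i\theta W}]$. That displayed expansion is, however, incorrect: writing $W-\ln b_N=\ln\big(1+\lambda_N\S^{(N)}\big)$ with $\lambda_N=N/b_N\sim 1/\ln N$, the one-step exponent is $\Psi_C(\lambda_N\theta)+o(\lambda_N)=\lambda_N\psi_C(\theta)+i\theta\,\lambda_N\ln(1/\lambda_N)+o(\lambda_N)$; the non-strict $1$-stability produces an anomalous drift $i\theta\,\lambda_N\ln(1/\lambda_N)\asymp i\theta\,\ln\ln N/\ln N$ per step --- the same term responsible for the $\ln\ln N/\ln N$ contribution to $v_N$ in (\ref{eq:vgumbel}) --- which is much larger than your claimed $o(1/\ln N)$ error and is not contained in $(1/\ln b_N)\psi_C(\theta)$. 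Consequently your two displays are mutually inconsistent: inserting the first into the second, the centering $\beta_N-\ln b_N=\lambda_N\ln m_N$ cancels the $i\theta\tau\ln m_N$ created by rescaling $\psi_C$, but a divergent phase $-i\theta\tau\ln\ln N$ remains, precisely because the compensating per-step drift was dropped.

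More seriously, the error bookkeeping cannot close as announced: after the $[m_N\tau]$-fold product you need a per-step error $o(1/m_N)$ at the frequencies $|\theta|\ln N/m_N$ actually used, not $o(1/\ln N)$, and this is where the dependence on $m_N$ is genuinely delicate rather than technical. Your own (correct) tail estimate $\P(W-\ln b_N>x)\sim\big((e^x-1)\ln N\big)^{-1}$ shows that the per-step overshoot has a $1/x$-type tail only for $x\lesssim 1$, with an exponential cutoff beyond; hence $[m_N\tau]\,\P(Y_{N,1}>y)\to\tau/y$ when $m_N/\ln N\to0$, but if $m_N/\ln N\to c\in(0,\infty)$ one gets $\tau c/(e^{cy}-1)$, and if $m_N/\ln N\to\infty$ the rescaled jump intensity vanishes, the truncated means do not match $\beta_N$ (the true mean per step is $\ln b_N+(\ln\ln N+1)/\ln N+o(1/\ln N)$), and the $\beta_N$-centered, $(m_N/\ln N)$-normalized sum concentrates and drifts to $-\infty$. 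So the scheme you outline can deliver the Cauchy limit only in the regime $m_N=o(\ln N)$; the point you defer as ``the hardest part'' --- uniformity of the expansion over the frequency window and the comparison of $m_N$ with $\ln N$ --- is not an estimate to be tightened but the place where the argument, and indeed the statement's claimed validity for all $m_N\to\infty$, must be confronted, and the proposal offers no argument there. (Note that the paper's own proof, which uses the fixed-$u$ asymptotics $\chi_N(u)=\exp\{\Psi_0(u)(1+o(1))\}$ at the $N$-dependent frequency $u/m_N$ before invoking \cite{Jacod}, passes over the same point, so it cannot be used to fill this gap by citation.)
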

Fluctuations of the front location are Cauchy distributed in the large $N$ limit.
Keeping $N$ fixed, the authors in \cite{BD04} find that
they are asymptotically Gaussian as $t \to \8$. We prove here that, as $N$ is sent to infinity, they are stable with index 1, a fact which has been overlooked in \cite{BD04}. When large populations are considered, this
is  the relevant point of view.
The Cauchy limit also holds true in the boundary case when time is not speeded-up
($m_N=1$) and $N \to \8$. 
For most growth models,  finding the scaling limit is  notoriously difficult.  In the present  model, 
 it is not difficult for the Gumbel distribution, but remains an open question for any other distribution.
\medskip

We next consider the case when $\xi$ is a perturbation of the Gumbel law.
Define $\eps(x) \in [-\8,1]$ by
\begin{equation} \label{eq:eps}
 \eps(x)=1+e^x \ln {\P(\xi \leq x)} .
\end{equation}
Note that $\eps \equiv 0$ 
is the case of  $\xi \sim {\rm G}(0,1)$.
The empirical 
distribution function (more precisely, its complement to 1)
of the $N$-particle system (\ref{eq:defXi})
is the random function
\begin{equation}
  \label{def:empiric}
U_N(t,x) = N^{-1} \sum_{i=1}^N {\bf 1}_{X_i(t) > x}
\end{equation}
This  is a non-increasing step function with jumps of size $1/N$ and limits $U_N(t,-\8)=1, $
$U_N(t,+\8)=0$. It has the  shape of a front wave, propagating at mean speed $v_N$, 
and it combines two interesting aspects: randomness and discrete values. We will call it the 
front profile, and we study in the next result  its relevant part, around the front location.
\begin{teo}
\label{th:cvTW} Assume  that
\begin{equation} \label{eq:eps2}
\lim_{x \to +\8} \eps(x)=0, \qquad {\rm and} \quad  \eps(x) \in [-\delta^{-1},1-\delta], 
\end{equation}
for all $x$ and some $\delta  >0$.
 Then, for all initial configurations $X(0) \in \R^N$, all   
$k \geq 1$, all $K_N \subset \{1,\dots,N\}$ with cardinality $k$,
and all $t \geq 2$
we have
\begin{equation} \label{eq:cvTW1}
\Big(X_j(t) - \Phi(X(t-1)); j \in K_N\Big) 
\cvlaw G(0,1)^{\otimes k}, \quad N \to \8,
\end{equation}
with $\Phi$ from (\ref{eq:Phigumbel}), and moreover,
\begin{equation} \label{eq:cvTW2}
U_N\big(t,  \Phi(X(t\!-\!1))+x \big) \longrightarrow u(x)=1-e^{-e^{-x}} 
\end{equation}
uniformly in  probability as  $N \to \8$.
\end{teo}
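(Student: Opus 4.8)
The plan is to prove both statements by conditioning on the configuration $X(t-1)$ and exploiting the elementary max-of-Gumbel algebra. Recall that if $\xi_{i,j}\sim\mathrm G(0,1)$ are i.i.d., then for fixed weights $a_j$ the variable $\max_{1\le j\le N}\{a_j+\xi_{i,j}\}$ is again Gumbel, centered at $\ln\sum_j e^{a_j}$; more precisely it equals $\Phi(a)+\eta_i$ where $\Phi(a)=\ln\sum_j e^{a_j}$ and $\eta_i\sim\mathrm G(0,1)$. Applied with $a_j=X_j(t-1)$, this gives $X_i(t)=\Phi(X(t-1))+\eta_i(t)$ with $\eta_i(t)$ i.i.d.\ $\mathrm G(0,1)$, {\it exactly}, not merely asymptotically. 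Wait --- this is too strong: the hypothesis \eqref{eq:eps2} allows $\eps\not\equiv 0$, so the jumps are only a perturbation of the Gumbel. So the real content is a robustness statement: even when the driving law is not exactly Gumbel, after one step the relevant coordinates look Gumbel in the $N\to\8$ limit.

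First I would reduce to a one-step statement. Conditionally on $X(t-1)$, the vector $\big(X_j(t)-\Phi(X(t-1))\big)_{j\in K_N}$ is a function of the i.i.d.\ family $\{\xi_{i,j}(t)\}$ and of the empirical measure of $X(t-1)$. The key observation is that $X_i(t)=\max_j\{X_j(t-1)+\xi_{i,j}(t)\}$, and the distribution function of this max, conditionally on $X(t-1)$, is $\prod_j \P(\xi\le x-X_j(t-1))$. Taking logs and using the definition \eqref{eq:eps} of $\eps$, $\ln\P(X_i(t)\le \Phi(X(t-1))+x\mid X(t-1)) = \sum_j \ln\P(\xi\le \Phi(X(t-1))+x-X_j(t-1)) = -\sum_j e^{X_j(t-1)-\Phi(X(t-1))-x}\,(1-\eps(\cdot)) = -e^{-x}\sum_j w_j (1-\eps_j)$, where $w_j=e^{X_j(t-1)}/\sum_k e^{X_k(t-1)}$ are probability weights summing to $1$ and $\eps_j=\eps(\Phi(X(t-1))+x-X_j(t-1))$. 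If one can show that $\sum_j w_j\eps_j\to 0$ in probability as $N\to\8$, then the conditional law of $X_i(t)-\Phi(X(t-1))$ converges to $\mathrm G(0,1)$; the product structure over $i\in K_N$ (for a {\it fixed} realization of $X(t-1)$ the $X_i(t)$, $i\in K_N$, are conditionally independent since they use disjoint noise variables) then yields \eqref{eq:cvTW1}, and \eqref{eq:cvTW2} follows because $U_N(t,\Phi(X(t-1))+x)$ is the empirical mean of the conditionally i.i.d.\ indicators ${\bf 1}_{X_i(t)>\Phi(X(t-1))+x}$ whose common conditional probability tends to $1-e^{-e^{-x}}=u(x)$, so a conditional law of large numbers gives uniform convergence in probability (uniformity via monotonicity of $U_N$ and $u$, i.e.\ a Dini/Polya-type argument on the finitely many values).

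The main obstacle, and the place where the hypothesis $t\ge 2$ enters, is controlling $\sum_j w_j\eps_j$ uniformly, i.e.\ showing the weights $w_j$ concentrate on indices $j$ for which $\Phi(X(t-1))-X_j(t-1)$ is large (so that $\eps_j\approx\eps(+\8)=0$ by \eqref{eq:eps2}), regardless of the initial configuration $X(0)$. The weights $w_j$ are themselves random, being built from $X(t-1)=X(t-1)$ which for $t\ge 2$ has already undergone at least one max-step; the first step spreads the configuration out so that the weights $w_j$ are typically of order much smaller than $1$ and no single term dominates. The bound $\eps\in[-\delta^{-1},1-\delta]$ gives the crude a priori control $\sum_j w_j\eps_j\le \delta^{-1}$ but more importantly $1-\eps\ge\delta>0$, which keeps the conditional distribution function genuinely between $0$ and $1$ and prevents degeneration; combined with $\eps(x)\to0$ one splits the sum at a threshold $x-( \Phi-X_j)\ge M$ versus $<M$, bounding the first piece by $\sup_{y\ge M}|\eps(y)|$ and the second by $\delta^{-1}\sum_{j:\,\Phi(X(t-1))-X_j(t-1)<x-M}w_j$, then showing this last mass $\to0$ in probability. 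That tail estimate on the weights after one Brunet--Derrida step --- essentially that the post-step configuration, viewed from its own $\Phi$, has exponentially light right tail uniformly in $N$ and in the starting point --- is the technical heart, and I expect it to be handled by the explicit one-step computation above applied at time $t-1$ (an induction on $t$ down to the base case $t=1$, where one uses that $X(1)$ is already a max over $N$ i.i.d.\ perturbed-Gumbel variables). Everything else --- conditional independence, the conditional LLN for $U_N$, and upgrading pointwise-in-$x$ to uniform-in-$x$ convergence --- is routine.
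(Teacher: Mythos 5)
Your reduction is exactly the paper's: the exact conditional formula $\ln\P\big(X_i(t)\le\Phi(X(t-1))+x\mid \f_{t-1}\big)=-e^{-x}\sum_j w_j(1-\eps_j)$, the splitting of $\sum_j w_j\eps_j$ at a threshold, conditional independence across $i\in K_N$, and the conditional law of large numbers plus a Dini argument for (\ref{eq:cvTW2}) all coincide with the published proof. The problem is that the step you yourself call ``the technical heart'' is left as an expectation rather than an argument, and it is precisely where the content lies. What must be shown is that, uniformly over the configuration at time $t-2$, the total weight carried by particles within any fixed distance $A$ of $\Phi(X(t-1))$ vanishes in probability; the paper proves the stronger fact $\Phi(X(t-1))-\max_i X_i(t-1)\to+\8$ in probability, so that the set $I_2$ is eventually empty. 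The mechanism is quantitative: for $N$ variables with exponential upper tails, the log-sum-exp $\Phi$ exceeds the maximum by about $\ln\ln N$. The paper gets this by rewriting the two-sided bound on $\eps$ as the stochastic domination (\ref{eq:HIC}), $g+c\leq_{\rm sto}\xi\leq_{\rm sto} g+d$, coupling the $\xi_{i,j}(t-1)$ with exact Gumbels $g_{i,j}(t-1)$, and transferring the exact computations of the Gumbel section: $\Phi(\tilde X(t-1))-\Phi(X(t-2))=\ln N+\ln\ln N+o(1)$ via (\ref{eq:cvstable}), while $\max_i\tilde X_i(t-1)-\Phi(X(t-2))=\ln N+O_P(1)$ by extreme value theory, whence a gap of order $\ln\ln N-(d-c)\to+\8$, uniformly in $X(t-2)$.

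Your sketch (``handled by the explicit one-step computation applied at time $t-1$, an induction on $t$ down to $t=1$'') does not supply this. No induction is needed --- only the single step from $t-2$ to $t-1$ matters --- and the exact one-step formula alone does not yet give the weight estimate: from it you would still have to prove that $\sum_i e^{X_i(t-1)-\Phi(X(t-2))}$ is of order $N\ln N$ (this is where the lower bound $1-\eps\ge\delta$ acts, giving a lower exponential tail, and it requires a truncation/concentration argument for a sum attracted to a stable law of index $1$), while $\max_i X_i(t-1)-\Phi(X(t-2))\le\ln N+o(\ln\ln N)$ with high probability (this is where $\eps\ge-\delta^{-1}$ acts, via a union bound). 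Such a direct route can be made to work as an alternative to the paper's coupling, but none of it appears in your proposal; as written, ``the first step spreads the configuration out so that no single term dominates'' is both weaker than what is needed and unproved. So the proposal is incomplete exactly at the point where the hypothesis (\ref{eq:eps2}) --- equivalently the coupling (\ref{eq:HIC}) --- does its real work, and where the restriction $t\ge 2$ is actually used.
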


As is well known, it is rare to find rigorous  perturbation results from exact computations for
such models. For example, the above  mentioned,  last passage oriented percolation model 
on the planar lattice,  
 is exactly sovable for  exponential passage times \cite{Joh00} or geometric ones \cite{BDJ99} on sites, and the fluctuations asympotically have a Tracy-Widom distribution.
 However, no  perturbative result has been obtained after a decade.  
 Even though our assumptions seem to be strong, it is somewhat surprising 
 that we can prove this result.
The second condition is equivalent to the following stochastic domination: there exist finite constants  $c<d$ ($c=\ln \delta, d=\ln(1+\delta^{-1})$) such that
\begin{equation} \label{eq:HIC}
g + c \leq_{\rm sto} \xi  \leq_{\rm sto} g + d, \qquad g \sim G(0,1).
\end{equation}•
This condition is reminiscent of assumption (1.13) in \cite{No11} used to control the fluctuations of the front location for KPP equation in random medium.
By Theorem \ref{th:cvTW}, as $N \to \8$, the front remains sharp and its profile, 
which is defined microscopically as the empirical 
distribution function of
particles, converges to the Gumbel distribution as $N \to \8$. 
Hence  the Gumbel distribution is not only stable, but it is also an attractor. 
\medskip


Finally, we study the finite-size corrections to the front speed in
a case when the distribution of $\xi$ is quite different from the Gumbel law. 

\begin{teo} 
\label{th:gap}
Let $b<a$ and $p \in (0,1)$, and assume that  the $\xi_{i,j}(t)$'s are integrable and satisfy
\begin{equation}
  \label{eq:abc}
  \P(\xi >a)=\P(\xi \in (b, a))=0, \qquad \P(\xi=a)=p,  \qquad \P(\xi \in (b-\eps,b])>0 
\end{equation}
for all $\eps >0$. Then, as $N \to \8$,
$$
  v_N =
a - (a-b) (1-p)^{N^2} 2^N + o\big((1-p)^{N^2} 2^N\big).
$$
\end{teo}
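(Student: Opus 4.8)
The plan is to analyze $\Phi(X(t))=\ln\sum_j e^{X_j(t)}$ directly from the recursion, exploiting that the jump law is concentrated at the top value $a$ with an atom of size $p$. Fix $t\ge 1$ and condition on $X(t)$. For each target $i$, the new coordinate is $X_i(t+1)=\max_j\{X_j(t)+\xi_{i,j}(t+1)\}$. Because $\xi\le a$ a.s.\ and $\P(\xi=a)=p$, the event that \emph{no} source $j$ contributes its atom to target $i$ has probability $(1-p)^N$; on the complementary event, $X_i(t+1)=M(t)+a$ where $M(t)=\max_jX_j(t)$. So to leading order every particle sits at $M(t)+a$ after one step, and the rare deficient particles lie at most $a-b$ below (since the next possible jump value is $\le b$). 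First I would make this precise: after one step, with probability $1-o(1)$ (in fact failing with probability $\le N(1-p)^N$) \emph{all} $N$ particles are within $[M(t)+b,\,M(t)+a]$ and at least one is exactly at $M(t)+a$. From the second step onward we may therefore assume the configuration is ``flat'' in a band of width $a-b$ just below its max, and this property is self-reproducing. The speed is $v_N=\lim_t t^{-1}\Phi(X(t))$, and since $\Phi$ is within $\ln N$ of $M$, it suffices to compute $\lim_t t^{-1}M(t)$, or equivalently the stationary expected increment $\E[M(t+1)-M(t)]$ in the flat regime.

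The heart of the matter is then to estimate $\E[M(t+1)-M(t)\mid \text{flat configuration}]$. We have $M(t+1)=M(t)+a$ unless \emph{every} target $i$ fails to receive the atom $a$ from \emph{every} source $j$ that currently sits at $M(t)$ — call $m$ the number of such top sources, $1\le m\le N$. The probability that a fixed target misses all $m$ top atoms is $(1-p)^m$, and these $N$ targets are independent given $X(t)$, so $\P(M(t+1)<M(t)+a\mid X(t))=(1-p)^{mN}$. On that event, $M(t+1)$ equals $M(t)$ plus the best available combination, which is at least $b'+a'$-type quantity but in any case lies in $[M(t)+2b-\text{something},M(t)+a-(a-b)]$; the precise loss is bounded, $M(t+1)\ge M(t)+a-(a-b)=M(t)+b$, and one checks that with the dominant probability on this rare event exactly one particle sits at $M(t)+a$ and the ``second layer'' is at height $\ge M(t)+$ (value $b$ or a sum involving $b$), so the decrement $a-M(t+1)+M(t)$ is $a-b+o(1)$ in the relevant sense. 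The subtle point is that $m$ itself fluctuates: I would show that in the flat regime $m$ is typically of order... actually no — $m$ is the count of particles \emph{exactly} at the max, which after a generic step is the number of targets that drew the atom, a Binomial$(N,1-(1-p)^{m'})$ where $m'$ was the previous top-count; this chain has $m=N$ as an absorbing-like value with overwhelming probability, since $1-(1-p)^{m'}$ is bounded below and a Binomial$(N,\text{const})$ equals $N$ only with exponentially small probability — so actually $m<N$ typically. Let me instead track $q_N:=\P(m=N)$-type events carefully: the cleanest route is to show $\E[a-(M(t+1)-M(t))]\mid X(t)] = (a-b)(1-p)^{mN}(1+o(1))$ and that $m$ concentrates at a value making $(1-p)^{mN}$ comparable to $(1-p)^{N^2}$; since $m\le N$ always, $(1-p)^{mN}\ge(1-p)^{N^2}$, and the combinatorial factor $2^N$ must come from summing over which subset of sources carried the atom in the two-step look-back. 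Concretely, $M(t+2)-M(t)<2a$ requires a deficiency at step $t+1$ or $t+2$, and enumerating the $\sim 2^N$ patterns of atom-transmission over the intermediate configuration produces the $2^N(1-p)^{N^2}$ order.

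Assembling: by the renewal/ergodic structure, $v_N=a-\E_{\rm stat}[a-(M(t+1)-M(t))]$, and the above gives $\E_{\rm stat}[a-(M(t+1)-M(t))]=(a-b)(1-p)^{N^2}2^N(1+o(1))$, where the $2^N$ is an entropic factor counting the number of ``minimal'' configurations of top-particles across one step that still force a deficiency, and the last hypothesis $\P(\xi\in(b-\eps,b])>0$ guarantees the second layer is genuinely near $b$ (not strictly below) so the constant is exactly $a-b$ and not smaller. The main obstacle I anticipate is controlling the fluctuations of the number $m$ of extremal particles and showing the Markov chain on configurations mixes fast enough that the Cesàro limit defining $v_N$ equals the one-step stationary expectation with the claimed precision — i.e., ruling out that rare excursions where $m$ is small (hence deficiencies more likely) contribute at the same order $(1-p)^{N^2}2^N$. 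This requires a coupling showing that from any configuration the system returns to ``$m=N-O(1)$, flat band'' within $O(1)$ steps with probability $1-o((1-p)^{N^2}2^N)$, after which the elementary estimate above closes the argument.
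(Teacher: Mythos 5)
Your heuristic for the leading order is the right one: the dominant way the front loses ground is a two-step pattern (some subset of targets receives the atom $a$ at the intermediate step, then all $N$ targets miss it), and summing over the $\sim 2^N$ intermediate subsets gives the factor $(1-p)^{N^2}2^N$; this is exactly the computation behind the key estimate $P_N(T_0<T_N)\sim q^{N^2}2^N$ in the paper's Bernoulli analysis. But as a proof the proposal has two genuine gaps, both at the points you yourself flag as ``obstacles''. First, you never reduce the general real-valued law to a tractable chain, and several of your structural claims fail for such a law: there is no atom (indeed possibly no mass) exactly at $b$, only mass in $(b-\eps,b]$, so the bound $M(t+1)\ge M(t)+b$ on the failure event is false, the ``flat band of width $a-b$'' is not self-reproducing (a particle misses the atom with probability $(1-p)^m$, not $(1-p)^N$, and when it misses it can land arbitrarily far below $b$ with positive probability), and the configuration seen from the max lives in a non-finite state space, so ``the stationary expected increment'' is not something you can compute by the one-step counting you describe. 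The paper handles this by a monotone coupling $\breve\xi\le\xi\le\hat\xi$ with a two-valued majorant and an $(1+\eps)$-lattice minorant, proving the expansion for integer-valued laws (Theorems \ref{th:bernoulli} and \ref{th:multinomial}) and then sandwiching $v_N$ and letting $\eps\searrow 0$; this is also precisely where the hypothesis $\P(\xi\in(b-\eps,b])>0$ enters (it guarantees assumption (R), i.e.\ $p_{k-1}>0$, for the minorant), not merely to ``make the constant exactly $a-b$''.

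Second, even granting a reduction to the leader-count chain, you do not rule out that longer or rarer excursions (three or more steps spent with few leaders before a failure, or failures occurring from atypical states reached during an excursion) contribute at the same order $(1-p)^{N^2}2^N$; you only state that a coupling ``returning to the flat band in $O(1)$ steps with probability $1-o((1-p)^{N^2}2^N)$'' would be needed, which is both unproved and not quite the right formulation. In the paper this is the bulk of the work: the speed is expressed exactly through Kac's formula, $v_N=1-(E_NT_0)^{-1}$, the return time is decomposed along visits to the all-leaders state, and the multi-step strategies are controlled by the inductive bound on $a_\ell,b_\ell$ via the $2\times 2$ matrix recursion (\ref{eq:prodmatr}), showing $\sum_{\ell\ge 2}a_\ell=o(2^N)$; the countable-valued case additionally needs the hitting-time lemmas (Lemmas \ref{ll1}, \ref{bound-moment}, \ref{red-bern}) to reduce to the Bernoulli computation. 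Without some substitute for these estimates, your argument establishes the correct guess for the correction but not the theorem.
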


We note that in such a case, in the leading order terms of the expansion as $N \to \8$, the value of the speed depends only on a few features of the distribution of $\xi$: the largest value $a$, its
probability mass $p$ and the gap $a-b$ with second largest one. All these involve the top of the support of the distribution, the other details being irrelevant. Such a behavior is expected for pulled fronts. 
\medskip

Though the mechanisms are  different, we make a parallel between the model considered here, and the BRW with selection, in order to discuss the Brunet-Derrida correction of the front speed
$v_N$
with respect to its asymptotic value. For definiteness, denote by $\eta$
the displacement variable, assume that $\eta$ is a.s. bounded from above by a constant $a$, and assume the branching 
is constant and equal to ${\beta}>1$.
The results of \cite{BeGo10} are obtained 
for 
$\beta \times \P(\eta =a)<1$
(Assumption A3 together with Lemma 5 (3) in \cite{BeGo11}), 
resulting in a logarithmic correction: This case corresponds to the Gumbel distribution for $\xi$ in our model, e.g., to Theorems \ref{cor:mettre en intro}     and \ref{th:cvTW} .
In contrast, the assumptions of Theorem \ref{th:gap} yield a
much smaller correction (of order exponential of negative $N^2$). This other case 
corresponds for large $N$ to the assumption $\beta \times \P(\eta =a)>1$ for the BRW with selection, where the corrections are exponentially small \cite{CG13}, precisely given by
$\rho^N$ with $\rho<1$ the extinction probability of the supercritical Galton-Watson 
process of particles located at site $ta$ at time $t$. In our model, the branching number is
$N$ and $\rho$ is itself exponentially small, yielding the correct exponent of negative $N^2$, but not the factor $2^N$.

\medskip

The paper is organised as follows. Section 2 contains some standard facts for the model. Section 3 deals with the front location in the case of the Gumbel law for $\xi$. In Section 4, we study the asymptotics as $N \to \8$ of the front profile (for Gumbel law and small perturbations), and their
relations to traveling waves and reaction-diffusion equation. In Sections 5 and 6, we 
expand the speed in the case of  integer valued, bounded from above, $\xi$'s, starting with the Bernoulli case. Theorems \ref{cor:mettre en intro}  and \ref{th:gap}
are proved in Sections \ref{sec:scalingGumbel} and \ref{proof {th:gap}}
respectively.

\section{Preliminaries for fixed $N$} \label{sec:fixedN}

For any fixed $N$, we show here the existence of large time
asymptotics for the $N$-particles system. 
It is convenient to shift the whole 
system by the position of the leading particle, because we show that
there exists an invariant measure for the shifted process.

\medskip

{\bf The ordered process:}
We now consider the process $\tilde X=(\tilde X(t), t \in \N)$ obtained 
by ordering the components of $X(t)$ at each time $t$, i.e., the set
$\{\tilde X_{1}(t),\tilde X_{2}(t),
\ldots,\tilde X_N(t)\}$ coincides with 
$\{X_{1}(t),X_{2}(t),
\ldots, X_N(t)
\}$ and
$
\tilde X_{1}(t)\ge \tilde X_{2}(t)\ge\cdots \geq  \tilde X_{N}(t).
$
Then, $\tilde X$ is a Markov chain with state space 
$$\Delta_N:=\{ y \in \R^N: y_1 \geq y_2 \geq \ldots \geq y_N\}.$$
Given $\tilde X(t)$, the vector $X(t)$ is uniformly distributed on the
$N!$ permutations of $\tilde X(t)$. Hence, it is sufficient to study 
$\tilde X$ instead of $X$.
It is easy to see that the sequence  $\tilde X$ has the same 
law as $Y=(Y(t), t \geq 0)$, given by as a recursive sequence
\begin{equation}
  \label{eq:defY}
  Y(t+1)= {\rm ordered\ vector}
\Big(\max_{1\le j\le N}\big\{Y_j(t)+\xi_{i,j}(t+1)\big\}, 1 \leq i \leq N\Big).
\end{equation}
Note that, when $X(0)$ is not ordered, 
 $\tilde X(1)$  is not a.s. equal to $Y(1)$ starting from $Y(0)=\tilde X(0)$.
In this section we study the sequence $Y$, which is nicer than $\tilde X$ because of the recursion
(\ref{eq:defY}):
Denote by $T_{\xi(t+1)}$ the above mapping $Y(t) \mapsto Y(t+1)$
on $\Delta_N$, and observe first that
\begin{equation}
  \label{eq:T}
 Y(t) = T_{\xi(t)}\ldots  T_{\xi(2)}T_{\xi(1)} Y(0).
\end{equation}
For $y, x  \in \Delta_N$, write $y\leq x$ if $y_i\leq x_i$ for all $i \leq N$.
The mapping  $T_{\xi(t)}$ is monotone for the partial order on 
 $\Delta_N$, 
  i.e., for the solutions $Y, Y'$
of (\ref{eq:defY}) starting from $Y(0), Y'(0)$ we have
$$
 Y(0) \leq  Y'(0)  \Longrightarrow Y(t) \leq  Y'(t) ,
$$
and moreover, with ${\bf 1}=(1,1,\ldots,1)$, $r \in \R$ and $y \in \R^ N$,
\begin{equation}\label{eq:sa1}
T_{\xi(t)} (y + r {\bf 1}) = r {\bf 1}+ T_{\xi(t)} (y) .
\end{equation}

\medskip

{\bf The process seen from the leading edge:}
For each $x \in \R^N$, we consider its shift $x^0$ by the maximum,
$$
x^0_i = x_i - \max_{1\le j\le N}x_j,
$$
and the corresponding processes $X^0, Y^0$. We call $X^0, Y^0$,  the unordered 
process, respectively, the ordered process, 
seen from the leading edge. Note that
$T_{\xi(t)}(y^0)= T_{\xi(t)}(y)-(\max_j y_j) {\bf 1}$  by (\ref{eq:sa1}), which yields
$$
\Big( T_{\xi(t)}(y^0)\Big)^0=\Big( T_{\xi(t)}(y)\Big)^0;
$$
a similar relation holds for $x$'s instead of $y$'s. 
Then  $X^0, Y^0$ are Markov chains, with $Y^0$ taking 
values in 
$\Delta_N^0:=\{y \in \Delta_N: y_1=0\}$, and we denote by $\nu_t$
the law of $Y^0(t)$. 
\medskip

\begin{prop} 
\label{one}
There exists an unique invariant measure $\nu$ for the process $Y^0$ seen 
from the leading edge,
and we have
\begin{equation}
\label{three}
\lim_{t\to\infty}\nu_t=\nu.
\end{equation}
Furthermore, there exists a $\delta_N>0$ such that
\begin{equation}
\label{four}
||\nu_t-\nu||_{TV}\le (1-\delta_N)^t.
\end{equation}
\end{prop}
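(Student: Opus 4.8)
The plan is to establish a Doeblin-type minorization for the transition kernel of $Y^0$ in one step, from which both uniqueness of $\nu$, convergence \eqref{three}, and the geometric rate \eqref{four} follow by the standard coupling argument. First I would exploit the monotonicity \eqref{eq:sa1} and the fact that $T_{\xi(t)}$ is order-preserving on $\Delta_N$: the key observation is that, after shifting by the leading edge, the configuration $Y^0(t+1)$ is a measurable function of $Y^0(t)$ and the fresh noise $\xi(t+1)$, and that there is a positive probability, \emph{uniform in the starting state} $y^0 \in \Delta_N^0$, that $Y^0(t+1)$ lands in a fixed "nice" set and, in fact, has a law that dominates a fixed nonzero measure. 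Concretely, conditionally on all the $\xi_{i,j}(t+1)$ being close to each other (say, all lying in a small window), the max in \eqref{eq:defY} is achieved for every $i$ at essentially the same $j$ (the index of the leading particle $Y_1(t)=0$), so $Y_i(t+1) \approx \xi_{i,1}(t+1)$ up to the common shift; after recentering by the leading edge the dependence on the past configuration is washed out except through which particle was leading. This produces a component $\epsilon$-piece of the kernel that does not depend on $y^0$.

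The key steps, in order, would be: (1) Fix a reference configuration and a window $[-A,A]$; show there is $\delta_N>0$ such that for every $y^0\in\Delta_N^0$, the event that $Y^0(t+1)$ (built from $y^0$ and one step of noise) belongs to a prescribed compact set $\mathcal{K}\subset\Delta_N^0$ has probability at least $\delta_N$, and moreover on a further sub-event the conditional law of $Y^0(t+1)$ restricted there has a density bounded below by a fixed measure $\mu$ with $\mu(\Delta_N^0)=\delta_N$. (2) Deduce the Doeblin minorization $\nu_1(y^0,\cdot)\ge \delta_N\,\mu(\cdot)$ for the one-step kernel. (3) Invoke the classical consequence: such a kernel is a contraction in total variation with ratio $(1-\delta_N)$, hence admits a unique invariant probability $\nu$, and $\|\nu_t-\nu\|_{TV}\le(1-\delta_N)^t$, which is exactly \eqref{four}; \eqref{three} is then immediate. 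Existence of \emph{some} invariant measure could alternatively be obtained first by a Krylov–Bogolyubov / tightness argument using the shift-invariance and the monotone structure (the process seen from the edge stays in $\Delta_N^0$ and one can control the spread using the integrability or tail assumptions on $\xi$), but the Doeblin route gives existence, uniqueness and the rate simultaneously, so I would go that way.

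The main obstacle is Step (1): producing the uniform-in-$y^0$ lower bound on the kernel. The difficulty is that $\Delta_N^0$ is noncompact — the trailing coordinates $y^0_2,\dots,y^0_N$ can be arbitrarily negative — so one cannot simply quote continuity/positivity of a density on a compact set. The resolution is precisely the pulled-front structure: because we recenter by $\max_j Y_j(t)$, a trailing particle that is very far behind contributes $Y_j(t)+\xi_{i,j}(t+1)$ which is with high probability \emph{not} the maximizer, so the image configuration is insensitive to how far back the laggards are; one only needs the noise to be large enough, with fixed positive probability, to let the leading particle dominate every coordinate $i$. Making this quantitative requires a mild lower bound on $\mathbb{P}(\xi>x)$ for the relevant $x$ (available here since $\xi$ is assumed real-valued with the stated support properties) and a careful choice of the set $\mathcal{K}$ and measure $\mu$; the constant $\delta_N$ will degrade as $N\to\infty$, but that is acceptable since Proposition \ref{one} is stated for fixed $N$. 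The remaining steps are the textbook coupling/contraction argument and are routine.
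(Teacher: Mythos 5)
Your overall strategy --- a one-step minorization uniform in the starting state, followed by the standard contraction/coupling conclusion --- is the right shape and is essentially what the paper does. But the concrete mechanism you propose for Step (1) has a genuine gap. Conditioning on ``all $\xi_{i,j}(t+1)$ lying in a small window'' does not produce a component of the kernel that is independent of $y^0$: if several coordinates of $y^0$ lie within the window's width of the leader (e.g.\ $y^0=\oplus$), the argmax in \eqref{eq:defY} need not be the leader's index, and while you do get the bracketing $\xi_{i,1}\le \max_j\{y_j+\xi_{i,j}\}\le \xi_{i,1}+\epsilon$ for every $i$, approximate equality of the image positions is not enough for a total-variation minorization --- the one-step laws from different $y^0$ must share an \emph{exact} common component. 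Your proposed repair (``conditional law \ldots has a density bounded below'') smuggles in an absolute-continuity/positivity assumption that the model does not have: the proposition is stated for arbitrary i.i.d.\ real $\xi$, and the paper later applies it to Bernoulli and lattice-valued $\xi$; likewise there is no stated support property supplying the lower bound on $\P(\xi>x)$ that you invoke.

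The fix (and the paper's actual argument) is to replace the window event by the event that, for every row $i$, the leader's column attains the row maximum: $\xi_{i,1}(t+1)=\max_{j\le N}\xi_{i,j}(t+1)$ for all $i$. Since $y_1=0\ge y_j$ for $y\in\Delta_N^0$, on this event $\max_j\{y_j+\xi_{i,j}\}=\xi_{i,1}$ exactly, simultaneously for every starting configuration; the chain therefore coalesces from all of $\Delta_N^0$ in one step. By exchangeability within each row this event has probability at least $N^{-N}$, with no regularity, tail, or integrability assumptions. Iterating, the coalescence time is stochastically geometric, which yields a renewal structure and hence existence and uniqueness of $\nu$, convergence of $\nu_t$, and $\|\nu_t-\nu\|_{TV}\le(1-\delta_N)^t$ with $\delta_N=N^{-N}$. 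So keep your Doeblin framing, but choose the uniform minorizing event so that the recentered image is exactly, not approximately, independent of the past; your noncompactness discussion and the ``laggards do not matter'' intuition are correct, and this event is precisely how the paper makes that intuition quantitative.
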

Similar results hold for the unordered process $X^0$, by the remark 
preceeding (\ref{eq:defY}). Also, we mention that the value of $\delta_N$ is not sharp.
\medskip

\begin{proof}
Consider the  random variable
$$\tau=\inf\big\{ t \geq 1: \xi_{i,1}(t)=\max\{\xi_{i,j}(t); j \leq N\}
\; \forall i \leq N\big\}.
$$
Then, $\tau$ is a stopping time for the filtration $({\mathcal F}_t)_{t \geq 0}$, with 
${\mathcal F}_t=\sigma \{\xi_{i,j}(s); s \leq t, i, j \geq 1\}$.
It is geometrically distributed with parameter not smaller than 
\begin{equation} \label{deltaN}
\delta_N = (1/N)^N.
\end{equation}
Denote by $\oplus, \ominus$ the configuration vectors
$$
\oplus=(0,0,\ldots,0), \qquad \ominus =(0,-\8,\ldots,-\8).
$$
They are extremal configurations in  (the completion of) $\Delta_N^0$ since
$\ominus \leq y \leq \oplus$ for all $y \in \Delta_N^0$.
Now, by definition of $\tau$ and  (\ref{eq:defY}),
$$
 T_{\xi(\tau)} \oplus =  T_{\xi(\tau)} \ominus = T_{\xi(\tau)} y \qquad
\forall y \in \Delta_N^0.
$$
Hence, for all $t \geq \tau$ and all $y \in \Delta_N$ such that 
$\max_{1\le j\le N}y_j=\max_{1\le j\le N}Y_j(0)$,
$$
Y(t) = T_{\xi(t)}\ldots  T_{\xi(2)}T_{\xi(1)} y .
$$
We can construct a renewal structure. Define $\tau_1=\tau$, and recursively
for $k \geq 0$, 
$\tau_{k+1}=\tau_k + \tau \circ \theta_{\tau_k}$ with $\theta$ the 
time-shift. This sequence is the success time sequence in a Bernoulli process,
we have $1 \leq \tau_1 < \tau_2 <\ldots<
\tau_k < \ldots < \8$ a.s. The following observation is plain but
fundamental.

\begin{lem}[Renewal structure] \label{lem:renewal}
The sequence $$(Y^0(s); 0 \leq s < \tau_1), (Y^0(\tau_1 \! + \! s);  0 \leq s < \tau_2 \!- \!\tau_1 ), 
 (Y^0(\tau_2 \! + \! s);  0 \leq s < \tau_3 \!- \!\tau_2 ), \ldots $$ is independent. 
Moreover,  for all $k \geq 1$, $ (Y^0(\tau_k \! + \! s);  s \geq 0)$ has the same law as 
$ (Y^0(1 \! + \! s); s \geq 0)$ starting from $ Y^0(0)= \oplus$. 
\end{lem}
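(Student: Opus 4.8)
The plan is to prove the Renewal structure lemma by exhibiting the claimed independence and identical-distribution properties directly from the definition of $\tau$ as a stopping time for the i.i.d.\ noise filtration $({\mathcal F}_t)$, together with the key coalescence identity already established just above, namely that $T_{\xi(\tau)}\oplus = T_{\xi(\tau)}\ominus = T_{\xi(\tau)}y$ for every $y \in \Delta_N^0$. The underlying point is that at each renewal time $\tau_k$ the process $Y^0$ forgets its past: whatever $Y^0(\tau_k-1)$ was, the map $T_{\xi(\tau_k)}$ collapses it (after the leading-edge shift) to the same value, so $Y^0(\tau_k)$ is a deterministic function of $\xi(\tau_k)$ alone. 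Hence the post-$\tau_k$ trajectory is a functional of the noise $\{\xi(s): s > \tau_{k-1}\}$ only — more precisely, of $\{\xi(s): \tau_{k-1} < s\}$ for the $k$-th block — and these noise blocks over disjoint time intervals are independent because $\tau$ is a stopping time and the $\xi$'s are i.i.d.

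First I would recall the strong Markov / renewal apparatus for i.i.d.\ sequences: since $\tau$ is an $({\mathcal F}_t)$-stopping time built from the noise alone and $\tau_{k+1} = \tau_k + \tau\circ\theta_{\tau_k}$, the classical decomposition of an i.i.d.\ sequence at successive copies of a stopping time gives that the noise blocks $\big(\xi(\tau_{k-1}+1),\dots,\xi(\tau_k)\big)$, $k\ge 1$ (with $\tau_0=0$), are independent, and that the $k$-th block for $k\ge 2$ has the same law as the first block $\big(\xi(1),\dots,\xi(\tau_1)\big)$. This is standard and I would simply cite it. Next I would argue that each piece of the displayed sequence in the lemma is a measurable function of the corresponding noise block together with the value of $Y^0$ at the start of that block, via iterated application of $T$: indeed $\big(Y^0(\tau_{k-1}+s); 0\le s < \tau_k-\tau_{k-1}\big)$ is obtained from $Y^0(\tau_{k-1})$ by successively applying $T_{\xi(\tau_{k-1}+1)}, T_{\xi(\tau_{k-1}+2)}, \dots$ and taking leading-edge shifts, using the commutation $\big(T_{\xi}(y^0)\big)^0 = \big(T_\xi(y)\big)^0$ from before (\ref{eq:defY}).

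The crucial reduction is then: by definition of $\tau$, at the \emph{last} step of each block the map $T_{\xi(\tau_k)}$ sends \emph{every} configuration in $\Delta_N^0$ to the same configuration, so $Y^0(\tau_k)$ does not depend on $Y^0(\tau_k-1)$ — it equals $\big(T_{\xi(\tau_k)}\oplus\big)^0$, a function of the noise in the $k$-th block only. Feeding this into the previous paragraph, the $k$-th piece of the sequence in the lemma is a function of the $k$-th noise block \emph{alone} (the starting value $Y^0(\tau_{k-1})$ having itself been reset at the end of the $(k-1)$-th block, or being the deterministic $\oplus$ in an appropriate initialization), with the convention that for the very first piece $\big(Y^0(s); 0\le s<\tau_1\big)$ one uses $Y^0(0)=\oplus$ since in this section we study $Y$ started from $\oplus$. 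Independence of the pieces then follows from independence of the noise blocks, and the identical-law statement for $k\ge 1$ follows from the fact that the $k$-th noise block (for $k\ge 2$) is distributed like the first, while the $k=1$ case with shift by one time unit is exactly the statement that $\big(Y^0(\tau_1+s);s\ge0\big)$, being a function of $\big(\xi(\tau_1+1),\xi(\tau_1+2),\dots\big)$ applied to the fixed configuration $\oplus = \big(T_{\xi(\tau_1)}\oplus\big)^0$... wait, rather: $\big(Y^0(\tau_k+s); s\ge 0\big)$ starts from $Y^0(\tau_k)=\big(T_{\xi(\tau_k)}\oplus\big)^0$ and evolves under $\xi(\tau_k+1),\dots$, which has the same joint law as $\big(Y^0(1+s); s\ge 0\big)$ started from $Y^0(0)=\oplus$ evolving under $\xi(1)$ giving $Y^0(1) = \big(T_{\xi(1)}\oplus\big)^0$ and then $\xi(2),\dots$; matching the noise indices via the strong Markov property closes the argument.

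I expect the main obstacle to be purely bookkeeping rather than conceptual: one must be careful about whether $\tau_k$ is included in its block or not (the displayed blocks are half-open, $0\le s < \tau_{k+1}-\tau_k$, so $Y^0(\tau_k)$ opens block $k+1$ and $Y^0(\tau_{k+1}-1)$ closes it, and it is precisely $Y^0(\tau_{k+1})$ — the first entry of the \emph{next} block — that is the coalesced value), and about the off-by-one time shift in the identical-law claim (comparing $\tau_k+s$ with $1+s$, which is why the noise at the coalescing step plays the role of $\xi(1)$). Making the measurability statements precise — defining the functionals $\Psi_k$ from noise blocks to trajectory blocks and checking they are the same map for all $k$ — is the only place requiring care, and even there the commutation relation $\big(T_\xi(y)\big)^0 = \big(T_\xi(y^0)\big)^0$ and the collapsing identity do all the real work.
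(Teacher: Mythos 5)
Your skeleton — decompose the i.i.d.\ noise at the successive stopping times, use the coalescing identity $T_{\xi(\tau)}y=T_{\xi(\tau)}\oplus$ for all $y\in\Delta_N^0$ to see that each post-renewal piece is a functional of its own noise block — is the same as the paper's, and it does yield the first assertion (independence of the pieces; note only that the first piece also depends on the arbitrary initial configuration $Y^0(0)$, which the lemma does not assume to be $\oplus$, but that is harmless for independence). It also yields that the pieces $(Y^0(\tau_k+s);s\ge0)$ are identically distributed over $k$. The genuine gap is in the second assertion, exactly at your closing step ``matching the noise indices via the strong Markov property closes the argument.'' The matrix $\xi(\tau_k)$ is \emph{not} distributed like a fresh $\xi(1)$: by the definition of $\tau$, its law is that of $\xi$ conditioned on the coalescing event $\{\xi_{i,1}=\max_{j\le N}\xi_{i,j}\ \forall i\le N\}$. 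Hence the initial value $Y^0(\tau_k)=\big(T_{\xi(\tau_k)}\oplus\big)^0$ of the $k$-th piece is distributed like $\big(T_{\xi}\oplus\big)^0$ under this \emph{conditional} law, whereas $Y^0(1)$ started from $\oplus$ is $\big(T_{\xi(1)}\oplus\big)^0$ under the \emph{unconditional} law; identifying the two is not a relabelling of indices, it is the one place where a distributional input is needed, and your proposal never supplies it.

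The paper's proof supplies precisely this input: since $T_\xi\oplus$ (the vector of row maxima) is unchanged when each row of $\xi$ is permuted, one argues that
$\P\big(Y^0(1)\in\cdot\,,\tau_1=1\,\big|\,Y^0(0)=\oplus\big)=\P\big(Y^0(1)\in\cdot\,\big|\,Y^0(0)=\oplus\big)\,\P(\tau_1=1)$,
i.e.\ the coalesced configuration is independent of the event that coalescence occurs; this factorization, propagated forward by the Markov property (the event $\{\tau_1=1\}$ involves only $\xi(1)$, which is independent of the later noise), is what converts ``all pieces have the same law'' into ``each piece has the same law as $(Y^0(1+s);s\ge0)$ started from $\oplus$.'' This step is where the exchangeability of each noise row really enters (for a continuous law, the location of the row maximum is independent of the row values), so it cannot be absorbed into the generic strong-Markov bookkeeping. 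Without it, your argument proves stationarity of the renewal blocks and their independence, but not the second sentence of the lemma.
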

\begin{proof} of Lemma \ref{lem:renewal}.
By the strong Markov property, the Markov chain $Y^0$ starts afresh from the stopping times
$\tau_1 < \tau_2 < \ldots$. This proves the first statement, and we now turn to the second one.
Note that $T_\xi \oplus = T_\eta \oplus $ if, for all $i$, $(\xi_{i,j}; j \leq N)$ is a permutation of
$(\eta_{i,j}; j \leq N)$. Hence,
$$
\P \big(  Y^0(1)   \in \cdot \; , \tau_1=1\;\vert Y^0(0)= \oplus \big)=
\P \big(  Y^0(1)  \in \cdot \;\vert Y^0(0)= \oplus \big) \times \P(\tau_1=1),
$$
and so
$$
\P \big(  Y^0(1)   \in \cdot \;\vert Y^0(0)= \oplus, \tau_1=1 \big)=
\P \big(  Y^0(1)  \in \cdot \;\vert Y^0(0)= \oplus \big).
$$
From the markovian structure and by induction it follows that
\begin{eqnarray*}
\P \big(  (Y^0(1 \! + \! s); s \geq 0 )  \in \cdot \;\vert Y^0(0)= \oplus \big)
&=&
\P \big(  (Y^0(1 \! + \! s); s \geq 0 )  \in \cdot \;\vert Y^0(0)= \oplus, \tau_1=1 \big)\\
&=&
\P \big(  (Y^0(1 \! + \! s); s \geq 0 )  \in \cdot \;\vert Y^0(0)= z, \tau_1=1 \big)\\
&=&
\P \big(  (Y^0(1 \! + \! s); s \geq 0 )  \in \cdot \;\vert  \tau_1=1 \big)\\
&=&
\P \big(  (Y^0(\tau_1 \! + \! s); s \geq 0 )  \in \cdot  \big),
\end{eqnarray*}
for all $z \in \Delta_N^0$. 
\end{proof}

The lemma implies the proposition, with the law $\nu$ given for a measurable $F: \Delta_N^0
\to \R_+$ by
\begin{eqnarray} \nonumber
\int F d\nu &=& \frac{1}{\E( \tau_2-\tau_1)}\; \E \sum_{\tau_1 \leq t < \tau_2}
F(Y^0(t)) \\ &=& \frac{1}{\E( \tau_1)}\; 
\sum_{t \geq 1} \E \big( F(Y^0(t)) {\mathbf 1}_{t<\tau_2} \vert 
\tau_1=1\big). \label{eq:mesinv}
\end{eqnarray}
\end{proof}

\begin{rmk} \label{rk:1}
(i) The proposition shows that the particles remain grouped as $t$ increases, i.e., the law of the distance 
between extreme  particles is a tight sequence under the time
evolution. In Theorem \ref{th:cvTW} we will see that when the law of $\xi$ is close to Gumbel, they remain
grouped too as $N$ increases.

\smallskip

(ii) The location of front 
at time $t$
can be described by any numerical function $\Phi(Y(t))$ or $\Phi(Y(t-1))$
(or equivalently, any  symmetric function of $X(t)$ or $X(t-1)$)
which commutes to space translations by constant vectors, 
\begin{equation}
\label{comm}
\Phi(y+ r {\bf 1})=r + \Phi(y)\;,
\end{equation}
and which is increasing for the partial order on $\R^N$.
Among such, we mention also the maximum or the minimum value, the arithmetic mean, the median or any other order
statistics, and the choice in (\ref{eq:Phigumbel}) below.
For Proposition \ref{one}, we have taken the first choice -- the 
location of the rightmost particle --
for simplicity. 
Some other choices may be more appropriate to describe the front, by
looking in the bulk of the system rather than at the leading edge. 
For fixed $N$ all such choices will however 
lead to the same value for the speed $v_N$ of the front, that we define below.
\end{rmk}
Note that for a function $\Phi$ which satisfies
the commutation relation (\ref{comm}) we have  the inequalities
$$
 \Phi( \oplus) + \min_{i \leq N} \max_{j \leq N} \left\{ Y_j(0)+ \xi_{i,j}(1)\right\} \leq
\Phi(Y(1)) \leq \Phi( \oplus) + \max_{i,j \leq N} \left\{ Y_j(0)+ \xi_{i,j}(1)\right\} .
$$
Now, by equation (\ref{eq:mesinv}) and by the fact that $\tau_1$ is
stochastically smaller than a geometric random variable with parameter
$(1/N)^N$ we  conclude that if $\xi\in L^p, Y(0) \in L^p$ then
 $\Phi(Y(t)) \in L^p$, and also
$\int |y_N|^p d\nu(y) < \8$. The following corollary is a straightforward
consequence of the above.

\begin{cor}[Speed of the front] \label{cor:speed}
If $\xi \in L^1$, the following limits
$$
v_N=\lim_{t \to \8} t^{-1} \max\{ X_i(t); 1 \leq i \leq N\}=
\lim_{t \to \8} t^{-1} \min\{ X_i(t); 1 \leq i \leq N\}
$$
exist a.s.,  and $v_N$   is given by
$$
v_N = \int_{\Delta_N^0} d\nu(y)\;
\E \max_{1\le i, j\le N}\big\{y_j+\xi_{i,j}(1)\big\}.
$$
Moreover, if $\xi \in L^2$, 
$$ 
t^{-1/2} \big( \max\{ X_i(t); 1 \leq i \leq N\}-v_N t \big)
$$ 
converges in law  as $t \to \8$
to a Gaussian r.v. with variance $\sigma_N^2 \in (0,\8)$.
\end{cor}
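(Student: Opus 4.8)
The plan is to run a renewal--reward argument built on the regeneration times of Lemma~\ref{lem:renewal}. Since $\max_i X_i(t)=\tilde X_1(t)$, $\min_i X_i(t)=\tilde X_N(t)$, and $\tilde X$ has the same law as $Y$, it suffices to prove the corresponding statements for $Y_1$ and $Y_N$. First I would telescope: combining (\ref{eq:sa1}) with the definition of $T_{\xi(t)}$ gives, for $t\ge1$,
$$
Y_1(t)=Y_1(0)+\sum_{s=1}^t\Delta_s,\qquad \Delta_s:=\max_{1\le i,j\le N}\{Y^0_j(s-1)+\xi_{i,j}(s)\},
$$
an increment which is a function of the $\mathcal F_{s-1}$--measurable vector $Y^0(s-1)$ and the independent noise $\xi(s)$. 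Because $Y^0_1\equiv0\ge Y^0_j$ on $\Delta_N^0$, I get the crude two--sided bound $\max_i\xi_{i,1}(s)\le\Delta_s\le\max_{i,j}\xi_{i,j}(s)$, hence $|\Delta_s|\le Z_s$ for an i.i.d.\ sequence $(Z_s)$ of functions of $\xi(s)$ alone, lying in $L^1$ (resp.\ $L^2$) whenever $\xi$ does.

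Next I would cut the trajectory into cycles at the regeneration times $\tau_1<\tau_2<\cdots$ of Proposition~\ref{one}, with lengths $L_k=\tau_{k+1}-\tau_k$ and rewards $W_k=\sum_{\tau_k<s\le\tau_{k+1}}\Delta_s$. By Lemma~\ref{lem:renewal} and the strong Markov property $(W_k,L_k)_{k\ge1}$ is i.i.d.; $L_1\eqlaw\tau_1$ is dominated by a geometric variable of parameter (\ref{deltaN}), so has exponential tails, and an optional--stopping identity ($\{\tau_1<s\le\tau_2\}\in\mathcal F_{s-1}$, $Z_s\perp\mathcal F_{s-1}$) gives $\E|W_1|\le\E[Z_1]\,\E(\tau_2-\tau_1)<\infty$. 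The renewal SLLN then yields $Y_1(t)/t\to v_N:=\E W_1/\E\tau_1$ a.s., once I check that the initial (finite) segment $[0,\tau_1)$ and the incomplete final cycle — the latter bounded by $\sum_{\tau_{n(t)}<s\le\tau_{n(t)+1}}Z_s$, a cycle functional with finite mean and hence $o(t)$ a.s.\ — are negligible, where $n(t)=\#\{k:\tau_k\le t\}$. To recognize $v_N$ as the stated integral I would feed $F(y)=\E\max_{i,j}\{y_j+\xi_{i,j}(1)\}$ into (\ref{eq:mesinv}): since $\xi(t+1)\perp\mathcal F_t$ one has $F(Y^0(t))=\E[\Delta_{t+1}\mid\mathcal F_t]$, and using $\{\tau_1\le t<\tau_2\}\in\mathcal F_t$ the right--hand side of (\ref{eq:mesinv}) collapses to $\E W_1/\E(\tau_2-\tau_1)=v_N$.

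For the minimum I would show the width $w(t):=Y_1(t)-Y_N(t)=|Y^0_N(t)|\ge0$ is sublinear. From $Y_N(t)\ge Y_N(t-1)+\min_i\max_j\xi_{i,j}(t)$ and $Y_1(t)\le Y_1(t-1)+\max_{i,j}\xi_{i,j}(t)$ one gets $w(t)-w(t-1)\le D_t:=\max_{i,j}\xi_{i,j}(t)-\min_i\max_j\xi_{i,j}(t)\ge0$, an i.i.d.\ $L^1$ sequence, while the configuration collapses at a regeneration so that $w(\tau_k)=D_{\tau_k}$. Thus on $\tau_k\le t<\tau_{k+1}$ one has $0\le w(t)\le\sum_{\tau_k\le s\le\tau_{k+1}}D_s$, again a cycle functional with finite mean, so $w(t)=o(t)$ a.s.\ and $Y_N(t)/t\to v_N$ a.s.

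For the Gaussian limit (under $\xi\in L^2$) I would invoke the regenerative CLT. A second--moment estimate, using $Z_s\in L^2$ and the exponential tail of the cycle length, gives $\E W_1^2<\infty$ and $\E\tau_1^2<\infty$; since $\E[W_1-v_NL_1]=0$, the mean--zero i.i.d.\ sums $R_n=\sum_{k\le n}(W_k-v_NL_k)$ satisfy $n^{-1/2}R_n\cvlaw\mathcal N(0,s^2)$ with $s^2=\var(W_1-v_NL_1)$. Anscombe's theorem with $n(t)/t\to1/\E\tau_1$, together with the $O(1)$ initial segment and the incomplete final cycle being $o(\sqrt t)$ in probability, upgrades this to $t^{-1/2}(Y_1(t)-v_Nt)\cvlaw\mathcal N(0,\sigma_N^2)$ with $\sigma_N^2=s^2/\E\tau_1<\infty$. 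Positivity I would extract from the non--degeneracy of $\xi$: conditioned on a length--one cycle (which has positive probability) the reward equals $\max_i\xi_{i,1}$ evaluated on a regeneration configuration, which is not a.s.\ constant, so $W_1-v_NL_1$ is not a.s.\ zero and $R_n$ is not tight. The steps I expect to be the real obstacle are the positivity $\sigma_N^2>0$, which genuinely requires the conditional law of a cycle reward, and — more tediously than deeply — the verification that the initial and incomplete cycles are negligible at the scales $t$ and $\sqrt t$, which reduces to controlling i.i.d.\ cycle functionals possessing one or two finite moments.
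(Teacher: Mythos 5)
Your proposal is correct in substance and follows essentially the same route as the paper: the paper's (very terse) proof also gets existence of the limit from the renewal structure of Lemma \ref{lem:renewal}, the equality of the max and min limits from the tightness of the front width (Remark \ref{rk:1}(i)), the formula for $v_N$ by evaluating the cycle representation (\ref{eq:mesinv}) of $\nu$ at $\Phi(y)=\max_i y_i$, and the Gaussian limit from the CLT for renewal (regenerative) processes; your write-up simply fills in the details. One small inaccuracy worth fixing: with your cycle convention $W_k=\sum_{\tau_k<s\le\tau_{k+1}}\Delta_s$, the pairs $(W_k,L_k)$ are \emph{not} independent, because the last increment $\Delta_{\tau_{k+1}}=\max_i\xi_{i,1}(\tau_{k+1})$ of cycle $k$ and the initial configuration $Y^0(\tau_{k+1})$ of cycle $k+1$ are both functions of the same noise $\xi(\tau_{k+1})$; the sequence is only stationary and $1$-dependent. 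This is harmless — either shift the reward window to $[\tau_k,\tau_{k+1})$, which makes the cycles functions of disjoint noise blocks and hence genuinely i.i.d., or invoke the LLN/CLT for $1$-dependent stationary sequences (adjusting the variance formula accordingly) — but as stated the i.i.d. claim is false. Note also that positivity of $\sigma_N^2$ requires $\xi$ to be non-degenerate, an implicit assumption in the paper's statement that your length-one-cycle argument correctly exploits.
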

We call $v_N$ the speed of the front of the $N$-particle system. 

\begin{proof} 
The equality of the two limits in the definition of $v_N$ follows from 
tightness in Remark \ref{rk:1}, (i), and the existence is from the renewal structure.   
Similarly, we have
$$
v_N=\int_{\Delta_N^0}  \big(
\E \Phi(T_\xi y) - \Phi(y) \big)\;d\nu(y)
$$
for all $\Phi$ as in Remark \ref{rk:1}, (ii), where  $\Delta_N^0$ is defined just before Proposition \ref{one}.  The second formula is obtained by taking $\Phi(y)=\max_{i \leq N} y_i$.
The Gaussian limit is the Central Limit Theorem for renewal processes.
\end{proof}

%

\section{The Gumbel distribution}   \label{sec:gumbel}

 The Gumbel law G($a,\la$)
with scaling parameter $\lambda >0$ and location parameter $a \in \R$
is defined by its distribution function
\begin{equation} \label{eq:gumbel}
\P( \xi \leq x) = \exp \big( - e^{-\la (x-a)}\big), \qquad x \in \R.
\end{equation}
This law is known to be a limit law in
extreme value theory \cite{LeadbetterLR83}.
In \cite{BD04}, Brunet and Derrida considered
the standard case $a=0, \la=1$, to find a complete explicit solution
to the model. 
In this section, we 
assume that the sequence $\xi_{i,j}$ is G($a,\la$)-distributed, for some 
$a, \la$.
Then, $\zeta = \la (\xi -a) \sim$  G($0,1$), while
\begin{equation} \label{eq:propgumbel}
\exp(- \zeta) \quad {\rm is\ exponentially\  distributed\ with\  parameter\ }1,
\end{equation}
and $\exp ( - e^{-\zeta})$ is uniform on $(0,1)$. Conversely, if $U$ is uniform on $(0,1)$ and
$\ex$ exponential of parameter 1, then $-\la \ln \ln (1/U)$ and $\ln \ex^{-\la}$ are $G(0,\la)$. 
\medskip

Here, the Gumbel distribution makes the model stationary for fixed $N$ and allows exact computations.

\subsection{The Front as a random walk} \label{sec:rwgumbel}
In this section, we fix $N \geq 1, a \in \R, \lambda>0$.
We will choose the function $\Phi: \R^N \to \R$, 
\begin{equation} \label{eq:Phigumbel}
\Phi(x)= \la^{-1} \ln \sum_{i=1}^N \exp \la x_i 
\end{equation}
to describe the front location $\Phi(X(t))$ at time $t$.
\begin{teo}[\cite{BD04}]
\label{th:gumbel} Assume the $\xi_{i,j}$'s are Gumbel $G(a, \lambda)$-distributed.

(i) Then, the sequence $(\Phi(X(t)); t \geq 0)$ is a random walk, with increments 
\begin{equation} \label{eq:Ups}
\Upsilon=a + \la^{-1} \ln \left( \sum_{i=1}^N \ex_{i}^{-1} \right)
\end{equation}
where the $\ex_i$ are i.i.d. exponential of parameter 1.

(ii) Then,
\begin{equation} \label{eq:vgumbelala1}
v_N = a + \la^{-1} \E \ln \left( \sum_{i=1}^N \ex_{i}^{-1} \right),\qquad
\sigma^2_N = \la^{-2} {\rm Var}\left(\ln  \sum_{i=1}^N \ex_{i}^{-1} \right).
\end{equation}

(iii) The law $\nu$ from proposition \ref{one} is the law of the shift
$V^0 \in \Delta_N^0$ of the ordered vector $V$
obtained from  a $N$-sample from a Gumbel G($0, \la$).
\end{teo}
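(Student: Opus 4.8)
The plan is to exploit the two defining properties of the Gumbel law recorded in (\ref{eq:propgumbel}): if $\zeta \sim G(0,1)$ then $e^{-\zeta}$ is exponential of parameter $1$, and conversely $-\lambda^{-1}\ln\mathcal E$ (with $\mathcal E$ exponential of parameter $1$) is $G(0,\lambda)$; more generally, the minimum of $n$ i.i.d.\ exponentials of parameter $1$ is exponential of parameter $n$, which is the algebraic miracle behind the whole computation. First I would fix a time $t$, condition on $X(t)$, and examine a single component $X_i(t+1)=\max_{1\le j\le N}\{X_j(t)+\xi_{i,j}(t+1)\}$. Writing $\xi_{i,j}(t+1)=a+\lambda^{-1}\zeta_{i,j}$ with $\zeta_{i,j}\sim G(0,1)$ i.i.d., we have $e^{\lambda(X_j(t)+\xi_{i,j}(t+1))}=e^{\lambda a}e^{\lambda X_j(t)}/\mathcal E_{i,j}$ where $\mathcal E_{i,j}=e^{-\zeta_{i,j}}$ is exponential of parameter $1$. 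Hence $\max_j\{X_j(t)+\xi_{i,j}(t+1)\}=a+\lambda^{-1}\ln\max_j\big(e^{\lambda X_j(t)}/\mathcal E_{i,j}\big)=a+\lambda^{-1}\ln\big(1/\min_j(\mathcal E_{i,j}e^{-\lambda X_j(t)})\big)$. Now $\mathcal E_{i,j}e^{-\lambda X_j(t)}$ is exponential of parameter $e^{\lambda X_j(t)}$, and for fixed $i$ these are independent over $j$, so $\min_{j}\mathcal E_{i,j}e^{-\lambda X_j(t)}$ is exponential of parameter $\sum_j e^{\lambda X_j(t)}=e^{\lambda\Phi(X(t))}$. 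Therefore $X_i(t+1)=a+\Phi(X(t))+\lambda^{-1}\ln(1/\mathcal E_i')$ where $\mathcal E_i'=e^{\lambda\Phi(X(t))}\min_j\mathcal E_{i,j}e^{-\lambda X_j(t)}$ is exponential of parameter $1$.

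The key structural point to check carefully is the joint law over $i$: the variables $\mathcal E_i'$ for $i=1,\dots,N$ are \emph{independent}, even though they are built from the same array $\{X_j(t)\}$, because the underlying $\{\mathcal E_{i,j}\}$ are independent across $i$ and the map producing $\mathcal E_i'$ uses only row $i$. Granting this, conditionally on $X(t)$ we get $e^{\lambda X_i(t+1)}=e^{\lambda(a+\Phi(X(t)))}/\mathcal E_i'$ with $\mathcal E_i'$ i.i.d.\ exponential of parameter $1$, so
\begin{equation} \label{eq:Phirec}
\Phi(X(t+1))=\lambda^{-1}\ln\sum_{i=1}^N e^{\lambda X_i(t+1)}=a+\Phi(X(t))+\lambda^{-1}\ln\Big(\sum_{i=1}^N (\mathcal E_i')^{-1}\Big).
\end{equation}
Since the increment $\Upsilon_{t+1}:=\Phi(X(t+1))-\Phi(X(t))$ as given by (\ref{eq:Phirec}) has a law not depending on $X(t)$, and since across different $t$ the arrays $\xi_{i,j}(t+1)$ are independent, the increments are i.i.d.\ with the common law (\ref{eq:Ups}); this proves (i). Part (ii) is then immediate from Corollary \ref{cor:speed} (or directly from the law of large numbers and CLT for the random walk), with $v_N=\E\Upsilon$ and $\sigma_N^2=\mathrm{Var}\,\Upsilon$; one should note $\E|\ln\sum_i \mathcal E_i^{-1}|<\infty$ and its variance is finite, which follows since $\sum_i \mathcal E_i^{-1}$ has a density with a power tail at $\infty$ and is bounded below by $\mathcal E_{(1)}^{-1}$ with $\mathcal E_{(1)}$ the minimum, so both $\ln$ and $\ln^2$ are integrable.

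For (iii), I would identify the invariant measure $\nu$ of the process $Y^0$ seen from the leading edge. The computation above shows more than (\ref{eq:Phirec}): conditionally on $X(t)$, the \emph{full vector} $(X_i(t+1))_i$ equals $(a+\Phi(X(t)))\mathbf 1 + (\lambda^{-1}\ln(1/\mathcal E_i'))_i$, i.e.\ it is (a deterministic shift of) an i.i.d.\ $N$-sample from $G(0,\lambda)$, \emph{regardless of $X(t)$}. Passing to the ordered vector and then shifting by the maximum kills the deterministic shift $a+\Phi(X(t))$ entirely, so the law of $Y^0(t+1)$ is exactly the law of the shift $V^0$ of an ordered $N$-sample $V$ from $G(0,\lambda)$, for every $t\ge 0$ and every starting configuration. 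In particular this law is invariant, and by the uniqueness in Proposition \ref{one} it must be $\nu$. (Alternatively one can verify invariance directly using (\ref{eq:sa1}) and the distributional identity just derived, then invoke uniqueness.)

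The main obstacle, and the only place real care is needed, is the independence claim for $\{\mathcal E_i'\}_{i=1}^N$ and, relatedly, the assertion that conditionally on $X(t)$ the new vector is an i.i.d.\ Gumbel sample. The subtlety is that all rows share the same weights $e^{\lambda X_j(t)}$, so one must argue that after taking row-wise minima the dependence disappears; this is exactly the classical fact that if $\{\mathcal E_{i,j}\}$ are independent exponentials then the row-minima $\min_j \mathcal E_{i,j}(\text{rate }r_j)$ are independent across $i$ with the same law, which is transparent once one writes everything in terms of the independent array. Everything else — the exponential/Gumbel algebra, integrability, and the invocation of Corollary \ref{cor:speed} — is routine.
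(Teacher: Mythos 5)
Your argument is correct and follows essentially the same route as the paper: rewrite each $X_i(t+1)$ as $a+\Phi(X(t))-\lambda^{-1}\ln\mathcal{E}_i'$ with the row-wise minima $\mathcal{E}_i'$ i.i.d.\ exponential of parameter $1$ and independent of the past, which gives the random walk structure (i), the speed and variance (ii), and the observation that $X(t+1)$ given $\mathcal{F}_t$ is an i.i.d.\ Gumbel sample shifted by $a+\Phi(X(t))$, yielding (iii). The only cosmetic difference is that you identify $\nu$ via invariance plus uniqueness from Proposition \ref{one}, where the paper reads it off directly from the conditional law; both are fine.
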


\begin{proof}:
Define $\f_t=\sigma( \xi_{i,j}(s), s \leq t, i, j \leq N)$, and 
$\ex_{i,j}(t)=\exp\{- \la (\xi_{i,j}(t) -a) \}$.   
By (\ref{eq:defXi}),
\begin{eqnarray} \nonumber
X_i(t+1)&=&\max_{1\le j\le N}\big\{X_j(t)+a- \la^{-1} \ln \ex_{i,j}(t+1)
\big\}\\ \label{eq:stst}
&=& a + \Phi(X(t)) - \la^{-1} \ln \ex_{i}(t+1) ,
\end{eqnarray}
where
$$
\ex_{i}(t+1) = \min_{1\le j\le N}\big\{\ex_{i,j}(t+1) 
e^{-\la X_j(t)}
\big\}e^{\la \Phi(X(t))}
, \qquad t \geq 0.
$$
Given $\f_t$, each variable $\ex_{i}(t+1)$ is 
exponentially  distributed with  parameter 1 by the standard stability 
property of the exponential law under independent minimum, and moreover, the whole vector 
$(\ex_{i}(t+1), i \leq N)$
is conditionnally independent. Therefore, this vector is  independent
of $\f_t$, and finally,
$$(\ex_{i}(t), 1 \leq i \leq N, t \geq 1) \quad {\rm  is \ independent 
\ and \ identically\  distributed}
$$
with   parameter 1,  exponential law. Hence,  the sequence
$$
\Upsilon(t)=a + \la^{-1} \ln \left( \sum_{i=1}^N \ex_{i}(t)^{-1} \right), \qquad t \geq 1,
$$
is i.i.d. with the same law as $\Upsilon$. 
Now, by (\ref{eq:stst}),
\begin{eqnarray*}
\Phi(X(t))&=&
\Phi(X(t-1)) + \Upsilon(t)\\
&=&
\Phi(X(0))+ \sum_{s=1}^t  \Upsilon(s)
\end{eqnarray*}
which shows that  $(\Phi(X(t)); t \geq 0)$ is a random walk. 
Thus, we obtain both (i) and (ii).

\medskip
From (\ref{eq:stst}), we see that the conditional law of $X(t+1)$ given $\f_t$
is the law of a $N$-sample from a Gumbel G($a+\Phi(X(t)), \la$). Hence,
$\nu$ is the law of the order statistics of 
a $N$-sample from a Gumbel G($0, \la$), shifted by the leading edge.
\end{proof}

We end this section with a remark. Observe that the other max-stable laws (Weibull and Frechet) do not yield exact computations for our model. Hence, the special role of 
 the Gumbel is not  due to the stability of that law under taking the maximum of i.i.d. sample,
but also to its behavior under shifts.
\subsection{Asymptotics for large $N$ } \label{sec:asGumbel}  
In this section we study the asymptotics as $N \to \8$ with a stable limit law. 
When $a=0$ and $\la=1$, Brunet and Derrida  \cite{BD04}  obtain the expansions
\begin{equation} \label{eq:vgumbel}
v_N=  \ln N + \ln \ln N + \frac{\ln \ln N}{ \ln N} +  \frac{1-\gamma}{ \ln N} + o( \frac{1}{ \ln N})
,
\end{equation}
\begin{equation} \label{eq:sigumbel}
\sigma_N^2= \frac{\pi^2}{3\ln N}+\ldots,
\end{equation}
by Laplace method for an integral representation of the Laplace transform of $\Upsilon$.
 We recover here the first terms of the expansions from the stable limit law, 
 in the streamline of 
our approach.

\medskip

We start to determine the correct scaling for the jumps of the random walk.
First, observe that $\ex^{-1}$ belongs to the  domain of normal attraction of a stable law of index 1.
Indeed,  the tails distribution is
$$\P(\ex^{-1} > x)=1-e^{-1/x}\sim  x^{-1}, \qquad x \to +\8.$$
Then, from e.g. Theorem 3.7.2 in \cite{Durrett},
\begin{equation} \label{eq:cvstable}
\S^{(N)}:=\frac{ \sum_{i=1}^N \ex_{i}^{-1} - b_N}{N}  \cvlaw \S_{},
\end{equation}
where  $b_N=N \E(\ex^{-1};\ex^{-1}<N)$, and $\S_{}$ is the totally  asymmetric
stable law of index $\alpha=1$, with characteristic function given for $u \in \R$ by
\begin{eqnarray}  \nonumber
\E e^{iu \S_{}}&=& 
\exp \left\{ \int_1^\8 (e^{iux}-1) \frac{dx}{x^2} + \int_0^1 (e^{iux}-1-iux) \frac{dx}{x^2}  
\right\} \\  \nonumber
&=& 
\exp \left\{ iCu - \frac{\pi}{2} |u| \big\{ 1+ i \frac{2}{\pi} {\rm sign}(u) \ln |u| \big\} \right\}\\
&=:& \exp \Psi_C(u), \label{eq:exponent}
\end{eqnarray}
for some real constant $C$ defined by the above equality.
By integration by parts, one can check that, as $N \to \8$,
\begin{equation} \label{eq:b_N}
b_N=N \int_{1/N}^\8 \frac{e^{-y}}{y}dy = N \big(\ln N - \gamma
+ \frac{1}{N} +{\mathcal O}(\frac{1}{N^2} ) \big),
\end{equation}
with $\gamma= - \int_0^\8 e^{-x} \ln x dx$ the Euler constant. Then,
$$
\ln b_N = \ln N + \ln \ln N - \frac{\gamma}{\ln N} + {\mathcal O}(\frac{1}{\ln^2 N} )
$$
We need to estimate 
\begin{eqnarray} \nonumber
\E  \ln \sum_{i=1}^N \ex_{i}^{-1}  - \ln b_N&=& \E \ln \big(1+ \frac{N}{b_N} \S_{}^{(N)} \big)\\
&=& \E \ln \big(1+ \frac{N}{b_N} \S_{} \big) + 
	{\mathcal O}((\frac{1}{\ln N})^{1-\delta}),
\label{eq:troppeu}
\end{eqnarray}
for all $\delta \in (0,1]$: indeed, since the moments of $\S^{(N)}$ of order $1-\delta/2$ are bounded
(Lemma 5.2.2 in \cite{IbLi}),
the sequence $  (\frac{b_N}{N})^{1-\delta} \big[\ln \big(1+ \frac{N}{b_N} \S_{}^{(N)} \big)-\ln \big(1+ \frac{N}{b_N} \S_{} \big)\big]$ is uniformly integrable, and it converges to 0.
A simple computation shows that
$$
\E \ln (1+ \eps \S_{}) = \int_1^\8 \ln(1+\eps y) \frac{dy}{y^2} (1+o(1)) + {\mathcal O}(\eps)
\sim \eps \ln (\eps^{-1}) 
$$
as $\eps \searrow 0$.  With $\eps=N/b_N$, we recover  the first 2 terms in the formula (\ref{eq:vgumbel}) for $v_N$. (If we could improve the error term in (\ref{eq:troppeu})
to $o( \ln \ln N / \ln N)$, we would get also the third term.)

With a similar computation,  we estimate as $N \to \8$
\begin{eqnarray*}
\sigma_N^2 &=& 
{\rm Var} \Big(
\ln \big(1+ \frac{N}{b_N} \S_{}^{(N)} \big) \Big) \\
&\sim& 
{\rm Var} \Big(
\ln \big(1+ \frac{N}{b_N} \S_{}\big) \Big) \\
&\sim& 
\E
\ln^2 \big(1+ \frac{N}{b_N} \S_{}\big)  \\
&\sim& 
\int_1^\8
\ln^2( 1+\frac{y}{\ln N})\frac{dy}{y^2} \\
&\sim& 
\int_0^\8
\ln^2( 1+\frac{y}{\ln N})\frac{dy}{y^2} \\
&=& \frac{C_0}{\ln N}\;, 
\end{eqnarray*}
with $C_0= \int_0^\8 \ln^2( 1+y) \frac{dy}{y^2} = \pi^2/3$. 

\subsection{Scaling limit for large $N$} \label{sec:scalingGumbel}

In this section, we let the  parameters $a, \la$ of the Gumbel depend on $N$,  and get stable law and process  as scaling limits for the walk:
In view of the above, we assume in this subsection  that $\xi_{i,j} \sim {\rm G}(a,\lambda)$ where $a=a_N$ and $\la=\la_N$ depend on $N$,
\begin{eqnarray} \label{eq:a_N}
\left\{
\begin{array}{cclcl}
\la_N&=&  \frac{\displaystyle N}{\displaystyle b_N} &\sim& \frac{\displaystyle 1}{\displaystyle \ln N}, \\
a_N&=& -C- \la_N^{-1} \ln( b_N)&=& -C- \ln^2 N - (\ln N)( \ln \ln N) +o(1), 
\end{array}•
\right.
\end{eqnarray} 
with the constant $C$ from (\ref{eq:exponent}).
Correspondingly, we write $$X=X^{(N)},
\Upsilon_N(t)=a_N + \la_N^{-1} \ln \left( \sum_{i=1}^N \ex_{i}(t)^{-1} \right).$$ Note that,
with $\S^{(N)}$ defined by the left-hand side of (\ref{eq:cvstable}),
we have by (\ref{eq:Ups}),
\begin{eqnarray} \nonumber
\Upsilon_N &=& 
\frac{1}{\la_N} \ln \Big( \sum_{i=1}^N \ex_{i}^{-1} \Big) -
C-\frac{1}{\lambda_N}\ln b_N\\  \nonumber
&=&  \frac{1}{\la_N} \ln \Big( 1 + \frac{N}{b_N}\S^{(N)} \Big) -C \\
& \cvlaw & \S_0, \label{eq:rwcv}
\end{eqnarray}
as $N \to \8$, where the stable variable $\S_0=\S-C$ has characteristic function  
$$\E \exp iu \S_0= \exp \Psi_0(u), \qquad 
\Psi_0(u)= - \frac{\pi}{2} |u| - i u \ln |u| 
$$ 
from the particular choice of $C$.
In words, with an appropriate renormalization as the system size increases, the instantaneous jump  
of the front converges to a stable law.
For all integer $n$ and independent 
copies $\S_{0,1}, \ldots \S_{0,n}$ of $\S_0$, we see that
$$
\frac{\S_{0,1}+ \ldots +\S_{0,n}}{n} -  \ln n \eqlaw \S_0
$$
from the characteristic function.
Consider the totally asymmetric Cauchy process $(\S_0(\tau); \tau \geq 0)$,
i.e.  the independent increment process with characteristic function
$$
\E \exp\{iu (\S_0(\tau)-\S_0(\tau'))\}= \exp \{(\tau-\tau') \Psi_0(u)\}, \qquad u \in \R, 0<\tau'<\tau.
$$
It is a  L\'evy process with  L\'evy measure $x^{-2}$ on $\R_+$,
it is not self-similar but it is stable in a wide sense: for all $\tau>0$,
$$
\frac{\S_0(\tau)}{\tau} -  \ln \tau \eqlaw \S_0(1)
$$
with $\S_0(1) \eqlaw \S_0$. We refer to \cite{Ber96} for a nice account on 
 L\'evy processes.
\medskip

We may speed up the time of the front propagation as well, say by a factor $m_N \to \8$ when $N \to \8$,
to get a continuous time description.
Then, we consider another scaling, and define for $\tau >0$,
\begin{eqnarray} \nonumber
\varphi_N(\tau) &=& \frac{ \Phi(X^{(N)}([m_N\tau]))- \Phi(X^{(N)}(0))}{m_N} - \tau \ln m_N\\
 \label{eq:rwsc} &=&  \frac{ \sum_{t=1}^{[m_N\tau]} \Upsilon_N(t)}{m_N} - \tau \ln m_N 
\end{eqnarray}
by theorem \ref{th:gumbel}.
Of course, this new centering can be viewed as an additional shift in the formula (\ref{eq:a_N}) for $a_N$.
 By (\ref{eq:rwcv}), the  characteristic function $\chi_N(u):=\E e^{iu \Upsilon_N}=  \exp \{ \Psi_0(u)(1+o(1))\}$,
where $o(1)$ depends on $u$ and tends to 0 as $N \to \8$. Then,
\begin{eqnarray*}
\E \exp\left\{ iu
 (\frac{ \sum_{t=1}^{[m_N\tau]} \Upsilon_N(t)}{m_N} - \tau \ln m_N )
\right\} 
&=&
\left( \chi_N(u/m_N)  \right) ^{[m_N\tau]}\exp\{ -i\frac{u[m_N\tau]}{m_N} \ln m_N\}\\
&\to & \exp \tau \Psi_0(u),
\end{eqnarray*}
as $N \to \8$, showing convergence at a fixed time $\tau$. In fact, convergence holds at the process level.
\begin{teo} 
\label{th:levy}
As $N \to \8$, the process $\varphi_N(\cdot)$ converges in law 
 in the Skorohod topology to the totally asymmetric Cauchy process $\S_0(\cdot)$.
\end{teo}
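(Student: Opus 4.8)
The plan is to establish convergence of the finite-dimensional distributions and then tightness in the Skorohod space $D([0,\infty),\R)$, which together give convergence in law in the Skorohod topology. The process $\varphi_N$ is, by (\ref{eq:rwsc}) and Theorem \ref{th:gumbel}, a centered and rescaled random walk: $\varphi_N(\tau)=m_N^{-1}\sum_{t=1}^{[m_N\tau]}\Upsilon_N(t)-\tau\ln m_N$, with i.i.d.\ increments $\Upsilon_N(t)\eqlaw\Upsilon_N$. So the statement is a functional limit theorem for a triangular array of i.i.d.\ summands, and the natural route is to invoke the standard criterion for convergence of rescaled random walks to a L\'evy process (e.g.\ Jacod--Shiryaev, Theorem VII.3.7, or Skorohod's classical results on convergence of processes with independent increments).

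First I would handle the finite-dimensional distributions. Since $\varphi_N$ has independent, stationary increments over the grid $\{k/m_N\}$, it suffices to combine the one-time-$\tau$ convergence already displayed just before the theorem statement — namely $\E\exp\{iu\varphi_N(\tau)\}\to\exp\tau\Psi_0(u)$, obtained from $\chi_N(u)=\exp\{\Psi_0(u)(1+o(1))\}$ via the identity $(\chi_N(u/m_N))^{[m_N\tau]}\exp\{-iu[m_N\tau]m_N^{-1}\ln m_N\}$ — with the independence of increments over disjoint time intervals. For a partition $0\le\tau_1<\dots<\tau_\ell$, the joint characteristic function factors into a product of terms of the form $(\chi_N(u_j/m_N))^{[m_N\tau_j]-[m_N\tau_{j-1}]}\exp\{\text{centering}\}$, each converging to $\exp\{(\tau_j-\tau_{j-1})\Psi_0(u_j)\}$; the only mild care needed is that $[m_N\tau_j]-[m_N\tau_{j-1}]\sim m_N(\tau_j-\tau_{j-1})$ and that the $o(1)$ in $\chi_N(u/m_N)=\exp\{\Psi_0(u)m_N^{-1}(1+o(1))\}$ is uniform for $u$ in compact sets, which follows from (\ref{eq:rwcv}) and the explicit form of $\Upsilon_N$ as $\la_N^{-1}\ln(1+(N/b_N)\S^{(N)})-C$ together with the tail estimate $\P(\ex^{-1}>x)\sim x^{-1}$.

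For tightness in the Skorohod topology, I would verify the conditions of the Jacod--Shiryaev criterion for convergence to a L\'evy process: it is enough to check that the (truncated) characteristics of the rescaled walk converge to those of $\S_0(\cdot)$. Concretely, writing the increments with a fixed truncation level, one checks (a) the sum of truncated means converges to the drift $\tau\cdot(-C$-type constant$)$ absorbed in the centering, (b) the sum of truncated second moments vanishes in the limit (the limiting process has no Gaussian part), and (c) for every bounded continuous $f$ vanishing near $0$, $m_N\,\tau\,\E f(\Upsilon_N)\to\tau\int f(x)x^{-2}\,dx$, i.e.\ the rescaled L\'evy measures $m_N\,\P(\Upsilon_N\in\cdot)$ converge vaguely on $\R\setminus\{0\}$ to $x^{-2}\mathbf 1_{x>0}\,dx$. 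Alternatively, since the limit is a L\'evy process (hence has no fixed discontinuities and is stochastically continuous), tightness follows from Aldous's criterion, which here reduces to an estimate on $\E[|\varphi_N(\sigma+h)-\varphi_N(\sigma)|\wedge 1]$ uniformly over stopping times $\sigma$ bounded by a fixed $T$; by stationarity and independence of increments this is controlled by $m_N\,h$ increments of $\Upsilon_N$, and the $1\wedge(\cdot)$ truncation tames the heavy (index-$1$) tail.

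The main obstacle is the logarithmic centering intertwined with the index-$1$ (non-strictly-stable) nature of the limit: because $\Upsilon_N$ has a first moment that grows like $\ln m_N$ after rescaling, the usual ``sum minus $n$ times the mean'' normalization must be tracked carefully, and the convergence of the truncated first characteristic is the delicate point — one must show the discrepancy between $m_N\la_N^{-1}\E[\ln(1+(N/b_N)\S^{(N)})]$-type quantities and the limiting drift $-C$ (plus the explicit $\tau\ln m_N$ subtraction) really vanishes, using the uniform integrability argument already invoked around (\ref{eq:troppeu}) (moments of $\S^{(N)}$ of order $1-\delta/2$ bounded, Lemma 5.2.2 in \cite{IbLi}). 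Once the characteristics are shown to converge with this centering, the functional CLT machinery delivers Skorohod convergence to $\S_0(\cdot)$ automatically.
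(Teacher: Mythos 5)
Your argument is correct and follows essentially the same route as the paper: the paper likewise observes that $\varphi_N(\cdot)$ has independent increments and then invokes the general functional limit theorems for triangular arrays of independent variables attracted to a stable law (Theorems 2.1 and 3.2 of Jacod's Saint-Flour notes), which is exactly the Jacod--Shiryaev-type criterion you spell out via convergence of characteristics together with the characteristic-function computation displayed before the theorem. Your additional discussion of the logarithmic centering and uniformity in $u$ just makes explicit the verifications the cited general results encapsulate.
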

\begin{proof}
The process $ \varphi_N(\cdot)$  itself has independent increments. The result follows from general results
on triangular arrays of independent variables in the domain of attraction of a stable law, e.g. Theorems 2.1 and 3.2 in \cite{Jacod}.
\end{proof}

{\it Proof of Theorem \ref{cor:mettre en intro}}: 
Apply the previous  Theorem \ref{th:levy} after making the substitution $\zeta=\lambda_N(\xi-a_N)$. 
\qed

\section{The front profile as a  traveling wave} \label{sec:travelingwave}

Recall the front profile 
\begin{equation}
  \label{def:empiric}
U_N(t,x) = N^{-1} \sum_{i=1}^N {\bf 1}_{X_i(t) > x}
\end{equation}
which is a wave-like, random step  function, traveling at speed $v_N$. One can write some kind of 
Kolmogorov-Petrovsky-Piscunov
equation with noise (and discrete time) governing its evolution, see (7--10)   in \cite{BD04} and 
Proposition \ref{g-case}.
Let $F$ denote the distribution function of the $\xi$'s, 
$F(x)=\P(\xi_{i,j}(t) \leq x)$. 
Given $\f_{t-1}$, the right-hand side is, up to the factor $N^{-1}$,
a binomial variable with parameters $N$ and
\begin{eqnarray} \nonumber
\P( X_i(t) >x \vert \f_{t-1}) 
&=&
1- \prod_{j=1}^N \P( X_j(t-1)+\xi_{i,j}(t) \leq x \vert \f_{t-1})
\qquad ({\rm by\ } (\ref{eq:defXi}))\\
&=&
1- \exp - N \int_\R 
 \ln F(x-y) U_N(t-1, dy). \label{eq:antoine}
\end{eqnarray}

\subsection{Gumbel case  } \label{sec:travelingwaveGumbel}

Starting with the  case of the Gumbel law $F(x)=\exp-e^{-\la(x-a)}$, 
we observe that (\ref{eq:antoine}) and (\ref{eq:Phigumbel}) imply 
$$
\P( X_i(t) \leq x \vert \f_{t-1}) 
=\exp  - e^{\la (x-a-\Phi(X(t-1)))},
$$
that is  (\ref{eq:stst}). 
It means that $X(t)-\Phi(X(t-1))$ is independent of ${\mathcal F}_{t-1}$, and that it is 
 a $N$-sample of the law $G(a,\la)$. 
For the  process at time $t$ centered by the front location $\Phi(X(t-1))$, the product measure $G(a,\la)^{\otimes N}$ is invariant. We summarize these observations:
\begin{prop}[\cite{BD04}]
\label{g-case}
Let  $\xi_{i,j}(t) \sim {\rm G}(a,\la)$ be given, and  $X$  defined by (\ref{eq:defXi}). 
Then, the random variables $G_i(t)$ defined by  $G(t)=(G_i(t); i \leq N)$ and
\begin{equation} \label{eq:rrs}
X(t)=G(t)+ \Phi(X(t-1)) {\bf 1}, \quad t \geq 1,
\end{equation}
are i.i.d.~with common law  $ {\rm G}(a,\la)$, and $G(t)$ is independent of $X(t-1),X(t-2),\ldots$. In particular,
$(G_i(t); i \leq N, t\geq 1)$ is an i.i.d. sequence with law G$(a,\la)$, independent of $X(0) \in \R^N$. Moreover,
\begin{equation} \label{eq:rrs2}
U_N(t,x)= \frac{1}{N} \sum_{i=1}^N {\bf 1}\{G_i(t)\geq x-\Phi(X(t\!-1\!))\}
, \quad t \geq 1, x \in \R.
\end{equation}
\end{prop}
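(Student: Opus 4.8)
The statement to prove is Proposition \ref{g-case}, which collects the consequences of the Gumbel computation already done in Theorem \ref{th:gumbel} and around equation (\ref{eq:antoine}). My plan is to work directly from the conditional distribution formula (\ref{eq:antoine}) specialized to the Gumbel case, which is the step that does all the real work; everything else is bookkeeping.

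\textbf{Step 1: Conditional law of $X(t)$ given $\f_{t-1}$.} Plug $F(x)=\exp(-e^{-\la(x-a)})$ into (\ref{eq:antoine}). Then $\ln F(x-y) = -e^{-\la(x-a-y)} = -e^{-\la(x-a)}e^{\la y}$, so $N\int_\R \ln F(x-y)\,U_N(t-1,dy) = -N e^{-\la(x-a)}\int_\R e^{\la y}U_N(t-1,dy) = -e^{-\la(x-a)}\sum_{j=1}^N e^{\la X_j(t-1)} = -e^{-\la(x-a-\Phi(X(t-1)))}$ by the definition (\ref{eq:Phigumbel}) of $\Phi$. Hence $\P(X_i(t)>x\mid\f_{t-1}) = 1-\exp(-e^{-\la(x-a-\Phi(X(t-1)))})$, i.e. $\P(X_i(t)\le x\mid\f_{t-1}) = \exp(-e^{-\la(x-a-\Phi(X(t-1)))})$, which is exactly $G(a+\Phi(X(t-1)),\la)$, recovering (\ref{eq:stst}). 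Crucially, this conditional law is the \emph{same} for every $i$, and (from the independence of the $\xi_{i,\cdot}(t)$ across $i$) the coordinates $X_i(t)$, $i\le N$, are conditionally independent given $\f_{t-1}$. Therefore, conditionally on $\f_{t-1}$, $X(t)$ is an i.i.d.\ $N$-sample from $G(a+\Phi(X(t-1)),\la)$.

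\textbf{Step 2: From $X(t)$ to $G(t)$ and the i.i.d.\ statement.} Define $G(t) = X(t) - \Phi(X(t-1)){\bf 1}$. By the scaling/translation property of the Gumbel, if $Z\sim G(c,\la)$ then $Z-c\sim G(0,\la)$; applied coordinatewise with $c=a+\Phi(X(t-1))$ (which is $\f_{t-1}$-measurable), Step 1 gives that conditionally on $\f_{t-1}$ the vector $G(t)$ is an i.i.d.\ $N$-sample from $G(a,\la)$ --- a law that does \emph{not} depend on $\f_{t-1}$. Hence $G(t)$ is independent of $\f_{t-1}$, in particular of $X(t-1),X(t-2),\dots$, and has the stated product law $G(a,\la)^{\otimes N}$. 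Iterating over $t\ge 1$: since $G(t)\perp\f_{t-1}$ and $G(1),\dots,G(t-1)$ together with $X(0)$ are $\f_{t-1}$-measurable, a routine induction shows that $(G_i(t); i\le N, t\ge 1)$ is an i.i.d.\ family with law $G(a,\la)$, independent of $X(0)$. This also yields the invariance claim: the conditional law of $X(t)-\Phi(X(t-1)){\bf 1}$ given $\f_{t-1}$ is $G(a,\la)^{\otimes N}$, so the process "seen from the front location $\Phi(X(t-1))$" has $G(a,\la)^{\otimes N}$ as an invariant measure.

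\textbf{Step 3: The profile formula.} Equation (\ref{eq:rrs2}) is immediate: substitute $X_i(t) = G_i(t) + \Phi(X(t-1))$ from (\ref{eq:rrs}) into the definition (\ref{def:empiric}) of $U_N(t,x)$, so $\{X_i(t)>x\} = \{G_i(t) > x - \Phi(X(t-1))\}$, and note that the boundary point carries no mass since $G_i(t)$ has a continuous law, so $>$ and $\ge$ agree a.s. No genuine obstacle arises anywhere; the only point requiring a little care is the passage from "conditionally i.i.d.\ given $\f_{t-1}$ with a law free of $\f_{t-1}$" to "globally i.i.d.\ and independent of the past", which is the standard argument I have sketched in Step 2. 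I would present Steps 1--2 as the proof body and dispatch (\ref{eq:rrs2}) in one line.
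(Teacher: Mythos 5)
Your proof is correct and follows essentially the same route as the paper: the text preceding the proposition derives the conditional law $\P(X_i(t)\le x\mid\f_{t-1})=\exp\{-e^{-\la(x-a-\Phi(X(t-1)))}\}$ from (\ref{eq:antoine}) and (\ref{eq:Phigumbel}), then concludes that $X(t)-\Phi(X(t-1)){\bf 1}$ is an $N$-sample of $G(a,\la)$ independent of $\f_{t-1}$, exactly as in your Steps 1--2. Your added remarks (conditional independence across rows $i$, the induction over $t$, and the a.s.\ equivalence of $>$ and $\ge$ in (\ref{eq:rrs2})) only make explicit what the paper leaves implicit.
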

\begin{rmk} (i) The recursion (\ref{eq:rrs2})  is the {\rm reaction-diffusion equation} satisfied by $U_N$. This equation is discrete and driven by a random noise $(G(t); t \geq 0)$.\\

(ii)
Note that the centering is given by a function of the configuration at the previous time $t-1$. One could easily get an 
invariant measure with a centering depending on the current configuration. For instance, consider
$$
X(t)- \max_j X_j(t)  \eqlaw g - \max_j g_j,
$$
with $g_j$ i.i.d. $G(a,\la)$-distributed, or replace the maximum value by another order statistics. However our centering, allowing interesting properties like the representation (\ref{eq:rrs}), is 
the most natural.
\end{rmk}

 By the law of large 
numbers,  as $N \to \8$,
the centered front converges almost surely to a limit front, given by  the (complement of)
the distribution function of G$(a, \la)$, as we state now.
\begin{prop} \label{prop:cvRD}
For all $t \geq 1$,   the following   holds:\\

(i) Convergence of the front profile: as $N \to \8$, conditionally on ${\mathcal F}_{t-1}$, we have a.s.
$$U_N\big(t,x+\Phi(X(t\!-\!1))\big) \longrightarrow u(x)= 1-\exp(-e^{-\la (x-a)}), \qquad {\rm uniformly\ in\ }  x \in \R.$$

(ii) Fluctuations: as $N \to \8$,
\begin{equation*}
\ln N \times \left\{
U_N\big(t,x+(t\!-\!1)(\ln b_N+a) +\Phi(X(0))\big)-u(x)\right\}
\cvlaw \frac{u'(x)}{\la} \big( t {\mathcal S} +t \ln t + tC\big)
\end{equation*}
as $N \to \8$, with $\mathcal S$ from (\ref{eq:cvstable}) and $C$ from (\ref{eq:exponent}).
\end{prop}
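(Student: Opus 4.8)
\textbf{Proof proposal for Proposition \ref{prop:cvRD}.}

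The plan is to exploit the exact representation from Proposition \ref{g-case}: by (\ref{eq:rrs2}), $U_N(t,x+\Phi(X(t-1)))$ is, conditionally on $\f_{t-1}$, the empirical tail distribution function of an i.i.d.\ sample $G_1(t),\dots,G_N(t)$ of law $G(a,\la)$, and this representation is in fact unconditional since $(G_i(t))$ is independent of $\f_{t-1}$. For part (i), I would therefore reduce the claim to the Glivenko--Cantelli theorem: the empirical tail $\frac1N\sum_i {\bf 1}\{G_i(t)\ge x\}$ converges to $1-F_{G(a,\la)}(x^-)=1-\exp(-e^{-\la(x-a)})=u(x)$ uniformly in $x$ almost surely, and since the limit is continuous the distinction between $\ge$ and $>$ is harmless in the limit. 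This gives (i) directly, with the conditioning on $\f_{t-1}$ being vacuous because of the independence.

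For part (ii), the idea is to rewrite the centering $\Phi(X(t-1))$ in terms of the random walk from Theorem \ref{th:gumbel}. By parts (i)--(ii) of that theorem, $\Phi(X(t-1)) = \Phi(X(0)) + \sum_{s=1}^{t-1}\Upsilon(s)$ with $\Upsilon(s)$ i.i.d.\ copies of $\Upsilon = a + \la^{-1}\ln\sum_i \ex_i^{-1}$. So
\begin{equation*}
U_N\big(t,x+(t-1)(\ln b_N+a)+\Phi(X(0))\big) = \frac1N\sum_{i=1}^N {\bf 1}\big\{G_i(t)\ge x - \D_N\big\},
\end{equation*}
where $\D_N := \Phi(X(t-1)) - \Phi(X(0)) - (t-1)(\ln b_N + a) = \sum_{s=1}^{t-1}\big(\Upsilon(s) - \ln b_N - a\big)$. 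By (\ref{eq:cvstable}) and the argument following it (the same uniform-integrability reduction as in (\ref{eq:troppeu})), each centered increment $\Upsilon(s)-\ln b_N - a = \la^{-1}\ln(1+(N/b_N)\S^{(N)}_s)$ satisfies $\ln N \cdot(\Upsilon(s)-\ln b_N-a) \cvlaw \S + \ln\ln N$-type corrections; more precisely, summing $t-1$ independent copies and using $\la_N = N/b_N \sim 1/\ln N$ together with the stable-in-the-wide-sense scaling of $\S$, one gets $\ln N \cdot \D_N \cvlaw \la^{-1}\big((t-1)\S + (t-1)\ln(t-1) + (t-1)C\big)$ — and I would need to check the precise constant, since the statement has $t$ rather than $t-1$ in the three terms, so presumably one absorbs the difference using that a single extra increment contributes a vanishing $o(1)$ correction at this scale, or the statement is meant asymptotically; I would reconcile this carefully.

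The remaining step is a delta-method / first-order Taylor argument: writing $V_N(x) := \frac1N\sum_i{\bf 1}\{G_i(t)\ge x-\D_N\}$, we have $V_N(x) = \bar U_N(x-\D_N)$ where $\bar U_N$ is the empirical tail of the sample, and $\ln N(V_N(x)-u(x)) = \ln N(\bar U_N(x-\D_N) - u(x-\D_N)) + \ln N(u(x-\D_N)-u(x))$. The first term is $O_P(\ln N/\sqrt N)\to 0$ by the Glivenko--Cantelli rate (or a Donsker-type bound), uniformly, since $\D_N\to 0$ in probability; the second term is $\ln N\cdot\big(-u'(x)\D_N + o(\D_N)\big)$, and since $\D_N = O_P(1/\ln N)$ the error is negligible and $\ln N(u(x-\D_N)-u(x)) = -u'(x)\cdot\ln N\,\D_N + o_P(1) \cvlaw -u'(x)\cdot\la^{-1}(t\S+t\ln t+tC)$ — note $-u'(x)\D_N$: here one must track the sign, as $u$ is decreasing so $u'<0$, and the claimed limit $\frac{u'(x)}{\la}(t\S+\cdots)$ requires $\ln N\D_N \cvlaw -\la^{-1}(t\S+t\ln t+tC)$, i.e.\ the front moves in the direction making the profile shift consistently; I would double-check this sign bookkeeping. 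The main obstacle is precisely this last point: propagating the scaling limit of the real-valued random centering $\D_N$ through the empirical tail to get a \emph{function-valued} (in $x$) limit, including getting the constants and the $t$ versus $t-1$ discrepancy exactly right, and justifying uniformity in $x$ of the Taylor remainder (which is fine since $u'$ is bounded and uniformly continuous). Convergence in law of the pair $(\bar U_N(\cdot), \D_N)$ and continuity of the map $(\phi,\d)\mapsto \ln N(\phi(\cdot-\d)-u)$ is not literally available since $\ln N$ blows up, so the clean way is the explicit decomposition above rather than an abstract continuous-mapping argument.
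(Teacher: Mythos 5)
Your proposal follows essentially the same route as the paper: the representation (\ref{eq:rrs2}) together with the law of large numbers and monotonicity (Dini) for (i), and for (ii) the same decomposition into a sampling fluctuation of order $N^{-1/2}$ (negligible against the $(\ln N)^{-1}$ scale) plus a first-order Taylor expansion of $u$ at the random centering error, whose $\ln N$-scaling converges to the stable limit through the random-walk representation of Theorem \ref{th:gumbel} and (\ref{eq:cvstable}), using the independence of $G(t)$ and the past. The sign of $u'$, the $t$ versus $t-1$ bookkeeping, and the exact constant in the limit law, which you flag but leave unresolved, are precisely the points where the paper's own statement and proof are loose (the proof is written for time $t+1$ with $t$ increments and carries an opposite sign convention in its intermediate display), so your hesitation reflects imprecision in the source rather than a gap in your argument -- though note that the difference between $t$ and $t-1$ copies of $\mathcal S$ cannot be absorbed as an $o(1)$ correction, so the reconciliation is by re-indexing, not by a vanishing error.
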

We willl see in the proof that the front location alone is responsible for the fluctuations of the profile. It dominates a 
smaller Gaussian fluctuation due to the sampling.\medskip


\begin{proof}of Proposition \ref{prop:cvRD}. As mentioned above, the law of large numbers yields  pointwise convergence in the first claim. 
Since $U_N(t,\cdot)$ is non-inceasing, uniformity follows from Dini's theorem. We now prove the fluctuation result.
 By (\ref{eq:Ups}) and (\ref{eq:cvstable}), 
$$
Z_N:=\ln N \times \big\{\Phi(X(t))-\Phi(X(0)) -t \ln b_N\big\} = \frac{\ln N}{\la}\sum_{s=1}^t  
\ln \big(1+ \frac{N}{b_N} \S_{}^{(N)}(s) \big)
$$
converges in law to the sum of $t$ independent copies of $\mathcal S$, which has itself the law of  
$ t {\mathcal S} +t \ln t + tC$. On the other hand, we have by (\ref{eq:rrs2}),
$$
U_N\big(t+1,x+t \ln b_N + \Phi(X(0))\big)= \frac{1}{N} \sum_{i=1}^N {\bf 1}\{G_i(t+1)\geq x+ \fr{1}{\ln N} Z_N\}.
$$
 By  the central limit theorem for triangular arrays, for all sequences $z_N \to 0$, we see that,
$$
N^{1/2}\big(  \frac{1}{N} \sum_{i=1}^N {\bf 1}\{G_i(t+1)\geq x+ z_N\} -u(x+z_N)\big) \cvlaw
Z \sim {\mathcal N}(0, u(x)(1\!-\!u(x)))
$$
as $N \to \8$. Being of order $N^{-1/2}$, these fluctuations will vanish in front of the Cauchy ones, which are of order $(\ln N)^{-1}$.
In the left hand side, we Taylor expand $u(x+z_N)$. Since $G(t+1)$ and $Z_N$ are independent, 
we obtain
$$
\ln N \times \left\{U_N\big(t+1,x+t\ln b_N +\Phi(X(0))\big)-u(x)\right\}-u'(x) Z_N \to 0 
$$
in probability, which proves the result. 
\end{proof}

\medskip

{\bf Remark}:  A limiting reaction-diffusion equation.
It is natural to look for a reaction-diffusion equation which has $u$ as traveling wave (soliton).
By differentiation, one checks that, for all $v \in \R$, $\uu(t,x)=u(x-vt)$ (where $u(x)=1-\exp\{-e^{-\la(x-a)}\}$) is a solution of 
\begin{equation} \label{eq:KPPG}
\uu_t = \uu_{xx} + A(\uu), 
\end{equation}
with reaction term
$$
 A(u)=\la (1-u) 
\big[ \la \ln \frac{1}{1-u} + (v - \la)\big]
\ln\frac{1}{1-u} .
$$
Since $A(0)=A(1)=0$,  the values  $u=0$ and $u=1$ are equilibria.
For $v \ge \la$, we have $A(u)>0$ for all $u \in (0,1)$, hence 
these values are the unique  equilibria  $u \in [0,1]$, with $u=0$ unstable
and $u=1$ stable.
For $v \in [\la, 3\la)$, $A$ is convex in the neighborhood of 0, so the  equation is not of KPP type
\cite[p.2]{GiKe}.

\subsection{Exponential tails:  front profile and traveling wave}\label{sec:expTW}

In this section we prove Theorem \ref{th:cvTW}. 
We consider the case of $\xi$ with exponential upper tails, $1-F(x)=\P(\xi > x) \sim e^{-x}$
as $x \to +\8$, that  can be written as
\begin{equation} \label{eq:eps}
\lim_{x \to +\8} \eps(x)=0, \qquad {\rm with} \quad \eps(x)=1+e^x \ln {F(x)} .
\end{equation}
 (By affine transformation, we also cover the case of tails
$\P(\xi > x) \sim e^{\la (x-a)}$.) By definition, $\eps(x) \in [-\8,1]$. 

We let $N \to \8$, keeping $t$ fixed and we use $\Phi$
from (\ref{eq:Phigumbel}) with $\la=1$. 
To show that the empirical distribution function (\ref{def:empiric}) converges, after the proper shift,
to that of the Gumbel distribution with the same tails, we will use the  stronger assumption
that
\begin{equation} \label{eq:eps2}
\lim_{x \to +\8} \eps(x)=0, \qquad {\rm and} \quad  \eps(x) \in [-\delta^{-1},1-\delta], 
\end{equation}
for all $x$ with some $\delta>0$.

\medskip

\begin{proof} (Theorem \ref{th:cvTW})  First of all, note that
$\ln F(x)= -(1-\eps(x)) e^{-x}.$
Let $m_i= e^{X_i(t-1)-  \Phi(X(t-1))}$, which add up to 1 by our choice of $\Phi$, and
let also $\eps_i= \eps (x+  \Phi(X(t-1))-X_i(t-1))$.

We start with the case $k=1, K_N=\{j\}$.
 From (\ref{eq:antoine}),
\begin{eqnarray} \nonumber
&&
\!\!\!\!\!\!\!\!\!\!\!\!\!\!\!\!\!\!\!\!\!\!\!\!\!\!\!\!\!\!\!\!\!\!\!\!\!\!\!\!\!
 \ln \P( X_j(t) - \Phi(X(t-1)) \leq x \vert \f_{t-1}) 
\qquad \qquad \qquad \qquad \qquad
\\ \nonumber
&=&
 \sum_{i=1}^N \ln F\big(x+  \Phi(X(t-1))-X_i(t-1)\big)\\ \nonumber
 &=&
 - \sum_{i=1}^N e^{-x-  \Phi(X(t-1))+X_i(t-1)} [1-\eps (x+  \Phi(X(t-1))-X_i(t-1))]  \\ \nonumber
 &=&
  - e^{-x} \sum_{i=1}^N m_i [1-\eps_i]  \\ \label{eq:ar1}
 &=&
  - e^{-x} \left(1 - \sum_{i \in I_1} m_i \eps_i - \sum_{i \in I_2} m_i \eps_i   \right)
   \end{eqnarray}
with $I_1=\{i: X_i(t-1) \leq \Phi(X(t-1)) - A\}$ and $I_2$ the complement in $\{1,\ldots N\}$,
and some real  number $A$ to be chosen later. By the first assumption in  (\ref{eq:eps2}),  we have
$$
|\sum_{i \in I_1} m_i \eps_i| \leq \sup\{ |\eps (y)|;  y > x+A\} \times 1 \to 0 \quad {\rm as} \; A \to \8,
$$
 for fixed $x$. The second sum, 
$$
|\sum_{i \in I_2}m_i \eps_i| \leq \| \eps \|_\8 \sum_{i \in I_2}  e^{X_i(t-1)-  \Phi(X(t-1))} ,
$$
will be bounded using the second assumption in (\ref{eq:eps2}). 
We can enlarge the probability space and couple the $\xi_{i,j}(s)$'s with 
$(g_{i,j}(t-1); i,j \leq N)$, which are  i.i.d. $G(0,1)$  independent of $(\xi_{i,j}(s); i,j \leq N, s \neq t-1)$, such that 
$$
g_{i,j}(t-1)+c \leq \xi_{i,j}(t-1) \leq g_{i,j}(t-1)+d.
$$
Define for $i \leq N$,
$$
\tilde X_i(t-1)= \max_{j \leq N} \{ X_j(t-2)+ g_{i,j}(t-1)\}.
$$
By the previous double inequality, 
$$
\tilde X_i(t-1) + c \leq X_i(t-1) \leq \tilde X_i(t-1) + d,
$$
and, since $\Phi$ is non-decreasing and such that $\Phi(y+r {\bf 1})
 = \Phi(y)+r $, we have also
$$
\Phi(X(t-1)) - \Phi( X(t-2)) \geq   \Phi(\tilde X(t-1))- \Phi(X(t-2))+c,  
$$
On the other hand, in analogy to the proof of Proposition \ref{g-case} for the Gumbel
case we know that $(\tilde X_i(t-1)-\Phi(X(t-2)); 
1 \leq i \leq N)$ is a $N$-sample of the law
$G(0,1)$. So, 
\begin{eqnarray*}
\Phi( \tilde X(t-1))- \Phi(X(t-2)) 
&=& \ln (b_N) + \ln \left( 1+ \frac{N}{b_N} \S^{(N)}\right)\\
&=& \ln N + \ln \ln N + o( 1) 
\end{eqnarray*}
in probability from (\ref{eq:cvstable}), and
$$ 
\max \{\tilde X_i(t-1); i \leq N\}- \Phi(X(t-2)) - \ln N \quad   {\rm converges\ in \ law}
$$
by the limit law  for the maximum of i.i.d.r.v.'s with exponential tails  \cite[Sect. I.6]{LeadbetterLR83}.
Combining these, we obtain, as $N \to \8$,
\begin{eqnarray*}
\Phi(  X(t-1))- \max \{ X_i(t-1); i \leq N\} &\geq &
\Phi( \tilde X(t-1))-d- \max \{\tilde X_i(t-1); i \leq N\} +c\\  
&\to&+\8 \quad {\rm \ in \ probability\ } , 
\end{eqnarray*}
which implies that the set $I_2$ becomes empty for fixed $A$ and increasing $N$.
 This shows that $ \sum_{i \in I_2}  e^{X_i(t-1)-  \Phi(X(t-1))}  \to 0$ in probability
(i.e., under $\P( \cdot | {\mathcal F}_{t-2})$)
uniformly on $X(t-2)$. Letting $N \to \8$ and $A \to +\8$ in (\ref{eq:ar1}), we have
$$
\P( X_j(t) - \Phi(X(t-1)) \leq x \vert \f_{t-2}) \to \exp - e^{-x}
$$
as $N \to \8$ uniformly on $X(t-2)$, which implies the first claim for $k=1$. For $k \geq 2$, 
recall that, conditionally on ${\mathcal F}_{t-1}$, the variables 
$(X_i(t); i \leq N)$ are independent. The previous arguments apply, yielding
(\ref{eq:cvTW1}).

Statement (\ref{eq:cvTW2}) for fixed $x$ follows from this and the fact that $X_i(t)$ are  independent conditionally on ${\mathcal F}_{t-1}$.
Convergence uniform for $x$ in compacts follows from pointwise convergence of monotone functions to a continuous limit
(Dini's theorem). Uniform convergence on $\R$ comes from the additional property that these functions are bounded by 1.
\end{proof}


\begin{rmk}
(i) From the stochastic comparison (\ref{eq:HIC}) of $\xi$ and the Gumbel, we obviously have
$v_N=\ln b_N + {\mathcal O}(1)$. We believe, but could not prove, that the error term
is in fact $o(1)$.
\\
(ii) 
 We believe, but could not prove, that the conclusions of 
Theorem \ref{th:cvTW} 
hold  under the only assumption that the function $\eps$ from (\ref{eq:eps}) tends to 0 at $+\8$.
\end{rmk}


\section{Front speed for the Bernoulli distribution}   \label{sec:bernoulli}

In this section we consider the case of a Bernoulli distribution
for the $\xi$'s,
$$
\P( \xi_{i,j}(t)=1)=p,\qquad \P( \xi_{i,j}(t)=0)=q=1-p,
$$
with $p \in (0,1)$.
For all starting configuration, from the coupling argument in the proof of 
proposition \ref{one}, we see that all
$N$ particles meet at a same location  at a geometric time, and, 
at all later times, they  share the location of the leading one, or they
lye at  a unit distance behind the leading one. We set 
$\Phi(x)=\max\{x_j; j \leq N\}$,  and we reduce the process $X^0$ to a simpler 
one given by considering
\begin{equation}
  \label{eq:defZ0}
Z(t)= \sharp \big\{j: 1 \leq j \leq N, X_j(t)=1+\max\{X_i(t-1); 
i \leq N\}\big\}.
\end{equation}
$Z(t)$ is equal to the number of leaders if the front has
moved one step forward at time $t$, and to 0 if the front stays
at the same location. Here, we define the front location as the rightmost occupied site $\Phi(X(t))=\max\{X_j(t); j \leq N\}$. 
Then, it is easy to see that 
$Z$ is a Markov chain on $\{0,1,\ldots, N\}$ with transitions
given by the binomial distributions 
\begin{equation}
  \label{eq:defZZ0}
\P\big( Z(t+1)= \cdot \;\vert Z(t)=m\big) 
= 
\left\{
\begin{array}{ll}
{\mathcal B} \big(N, 1-q^m\big)(\cdot), & m \geq 1,\\
{\mathcal B} \big(N, 1-q^N\big)(\cdot) , & m =0.
\end{array}
\right.
\end{equation}
Note that the chain has the same law on the finite set $\{1,2,\ldots\}$ when starting from 0 or from $N$.
Clearly, $v_N \to 1$ as $N \to \8$. We prove that the convergence is extremely 
fast.
\begin{teo} 
\label{th:bernoulli}
In the Bernoulli case, we have
\begin{equation}
  \label{eq:vbernoulli}
v_N = 1 - q^{N^2} 2^N + o(q^{N^2} 2^N)
\end{equation}
as $N \to \8$. 
\end{teo}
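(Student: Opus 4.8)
The strategy is to exploit the Markov chain $Z(t)$ from \eqref{eq:defZ0}--\eqref{eq:defZZ0}: the front advances by $1$ at time $t$ precisely when $Z(t) \geq 1$, so if $\pi_N$ denotes the stationary distribution of $Z$ restricted to its recurrent behaviour, then
\begin{equation} \label{eq:vZ}
v_N = \P_{\pi_N}(Z \geq 1) = 1 - \P_{\pi_N}(Z = 0),
\end{equation}
and the whole problem reduces to estimating the stationary mass at $0$. First I would record the transition probabilities in the regime $N \to \8$: from state $m \geq 1$ the chance of landing at $0$ is $(1-(1-q^m))^N = q^{mN}$, which for $m=1$ is $q^N$ and is negligibly smaller for $m \geq 2$; from state $0$ it is $q^{N^2}$. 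So the chain spends almost all its time in states with $Z$ close to $N$ (indeed starting from any $m \geq 1$, $Z(t+1)$ is $\mathcal B(N, 1-q^m)$ which concentrates near $N$), and the events $\{Z=0\}$ are rare. The plan is to compute $\P_{\pi_N}(Z=0)$ up to relative error $o(1)$.

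\textbf{Key steps.} (1) Set up a renewal/excursion decomposition: since the chain from $0$ and from $N$ has the same law on $\{1,2,\dots\}$, look at excursions away from $0$. By the standard formula $\pi_N(\{0\}) = 1/\E_0[\sigma_0]$ where $\sigma_0$ is the return time to $0$. (2) Estimate $\E_0[\sigma_0]$: from $0$ the chain jumps to a $\mathcal B(N,1-q^N)$ variable, which equals $N$ with probability $(1-q^N)^N = 1 - Nq^N + o(Nq^N) \to 1$, and more importantly is $\geq 1$ with probability $1 - q^{N^2}$. Once in $\{1,\dots,N\}$, the chain returns to $0$ only by hitting a state $m$ and then drawing $0$ from $\mathcal B(N,1-q^m)$, probability $q^{mN} \leq q^N$. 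Thus the number of steps spent in $\{1,\dots,N\}$ before returning to $0$ is geometric-like with success probability of order $q^N$ per step — but one must be careful, because from a typical state $m \approx N$ the return probability is $q^{mN}\approx q^{N^2}$, not $q^N$; the dominant contribution to returning comes from the rare steps where $Z$ dips to a small value. (3) The cleanest route: show that $\P_0(\sigma_0 = 1) = q^{N^2}$ exactly (the chain goes $0 \to 0$), and that $\P_0(\sigma_0 = 2) = \sum_{m \geq 1}\mathcal B(N,1-q^N)(\{m\}) \, q^{mN}$, which is dominated by the $m=1$ term $N q^N (1-q^N)^{N-1} q^N \approx N q^{2N}$, wait — this needs the probability that the first jump lands at $m=1$, which is $N q^{N(N-1)}(1-q^N) \approx N q^{N^2-N}$, times $q^N$, giving $\approx N q^{N^2}$; and landing at $m=0$ after one intermediate step of size $m'$ contributes $\sum_{m'} \mathcal B(N,1-q^N)(\{m'\}) q^{m'N}$. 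One sees the return time is essentially $1$ with probability $\to 1$: $\P_0(\sigma_0 = 1) \to 1$ is \emph{false} since $q^{N^2} \to 0$; rather $\sigma_0$ is typically large. (4) So instead estimate $\E_0[\sigma_0]$ directly via $\E_0[\sigma_0] = \sum_{t \geq 0}\P_0(\sigma_0 > t)$. The point is that after leaving $0$ (which happens on the first step with probability $1-q^{N^2}$), the excursion length in $\{1,\dots,N\}$ has expectation $R_N$, and $\E_0[\sigma_0] = q^{N^2}\cdot 1 + (1-q^{N^2})(1 + R_N)$. We need $v_N = 1/\E_0[\sigma_0]$ — but wait, \eqref{eq:vZ} gives $v_N = 1 - \pi_N(\{0\}) = 1 - 1/\E_0[\sigma_0]$, so we need $\E_0[\sigma_0] = 1 + q^{-N^2}2^{-N}(1+o(1))$, i.e. the \emph{excursion into} $\{1,\dots,N\}$ has expected length $R_N$ of order $q^{-N^2}2^{-N}$ — no: we need $1/\E_0[\sigma_0] = q^{N^2}2^N(1+o(1))$, so $\E_0[\sigma_0] = q^{-N^2}2^{-N}(1+o(1))$. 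The bulk of $\E_0[\sigma_0]$ is the time the chain spends away from $0$, namely $R_N \approx q^{-N^2}2^{-N}$.

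\textbf{Estimating $R_N$ — the main obstacle.} This is the crux. From a state $m \in \{1,\dots,N\}$ the chain moves to $\mathcal B(N,1-q^m)$; the return-to-$0$ probability from $m$ is $q^{mN}$, maximized at $m=1$ where it is $q^N$. So the chain returns to $0$ mainly by first reaching state $1$ and then drawing $0$. I would analyze the chain restricted to $\{1,\dots,N\}$, show it equilibrates quickly to a quasi-stationary distribution concentrated near $N$, compute the rate at which it visits state $1$ (or more precisely the rate $\sum_m \pi^{\mathrm{qs}}(m) q^{mN}$ of killing), and identify which $m$ dominates this sum. The factor $2^N$ should emerge from a combinatorial sum: the probability that $\mathcal B(N, 1-q^m)$ (with $m$ itself small and random) produces a small value, summed over configurations — e.g. $\sum_{m} \binom{N}{m}(\text{something})$ with the binomial coefficients summing to $2^N$. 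Concretely, the probability per unit time of hitting state $1$ from state $N$ is $\mathcal B(N,1-q^N)(\{1\}) = N q^{N(N-1)}(1-q^N) \approx N q^{N^2-N}$; hitting any small state $m$ has probability $\binom{N}{m}q^{N(N-m)}(1-q^N)^m \approx \binom{N}{m}q^{N^2 - mN}$, and then returning to $0$ from there costs $q^{mN}$, giving a contribution $\binom{N}{m} q^{N^2}$ to the per-step return probability; summing over $m = 1,\dots,N$ gives $q^{N^2}\sum_m \binom{N}{m} = q^{N^2}(2^N - 1)$. Hence the per-step probability of returning to $0$ (starting from near $N$) is $\sim q^{N^2} 2^N$, so $R_N \sim q^{-N^2}2^{-N}$, and $\pi_N(\{0\}) = 1/\E_0[\sigma_0] \sim q^{N^2}2^N$, which with \eqref{eq:vZ} yields $v_N = 1 - q^{N^2}2^N + o(q^{N^2}2^N)$. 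The work is in rigorously justifying the quasi-stationary approximation: one must show the chain reaches a neighbourhood of $N$ in $O(1)$ steps with overwhelming probability, that while there the one-step return probability is $q^{N^2}2^N(1+o(1))$ uniformly, and that the corrections from the transient phase and from excursions that wander among intermediate states are $o(q^{N^2}2^N)$ relative to the main term. I expect the uniform control of the return probability over the relevant part of the state space — showing the sum $\sum_m \binom{N}{m} q^{N^2 - mN}\cdot q^{mN}$ is genuinely $\sim 2^N q^{N^2}$ and not corrupted by the $(1-q^N)^m$ factors or by the chain failing to be near $N$ — to be the delicate point, and I would handle it with a sandwich argument using the monotonicity of the binomial transitions, exactly as in the coupling of Proposition~\ref{one}.
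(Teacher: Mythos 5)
Your skeleton is the same as the paper's: reduce to the chain $Z$, use the ergodic/Kac identity $v_N=1-\bar\nu_N(Z=0)=1-(E_NT_0)^{-1}$ (this is (\ref{eq:v_NT_0})), and identify the leading mechanism as the two-step route ``top $\to$ intermediate state $m$ $\to$ 0'', whose total probability $\sum_{m}\binom{N}{m}(1-q^N)^m q^{N(N-m)}q^{mN}=q^{N^2}\big[(2-q^N)^N-1-(1-q^N)^N\big]\sim q^{N^2}2^N$ is exactly the computation behind (\ref{eq:obv3}) and (\ref{eq:tps1}). Your leading-order arithmetic is right; one narrative slip is the claim that the return to $0$ happens ``mainly by first reaching state $1$'': your own sum shows every intermediate level contributes $\binom{N}{m}q^{N^2}$, so the dominant dips are to $m\approx N/2$ where the binomial coefficient peaks, not to $m=1$.

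The genuine gap is the remainder control, which is where essentially all of the work in the paper's Lemma \ref{lem:tps} lies, and your proposal leaves it at the level of a hope (``justify the quasi-stationary approximation by a monotone sandwich''). Two things must actually be proved: (i) excursions that wander among intermediate states for three or more steps before hitting $0$ contribute $o(q^{N^2}2^N)$ to $P_N(T_0<T_N)$; (ii) quantitative control of the excursion lengths, i.e. the analogues of (\ref{eq:tps2}) and (\ref{eq:tps3}), so that the decomposition (\ref{eq:xyz}) really converts the hitting probability into $E_NT_0\sim P_N(T_0<T_N)^{-1}$. Point (i) is delicate and monotone coupling does not address it: an $(\ell+1)$-step path contributes $q^{N^2}\prod_i\binom{N}{k_i}q^{k_i(N-k_{i-1})}$ (see (\ref{eq:prberrec})), and each extra step brings another binomial factor of size up to $2^N$ which must be beaten by the accompanying powers of $q$; a naive bound that treats the occupancy near the top and the one-step absorption probabilities separately over-counts and does not give $o(q^{N^2}2^N)$. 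The paper handles this by splitting intermediate levels at $(1-\eps)N$ and running the two-dimensional recursion (\ref{eq:344-5})--(\ref{eq:novo}), whose iteration matrix has top eigenvalue $\sim Nq^{\eps N}\to0$, yielding geometric decay in $\ell$ and $\sum_{\ell\ge2}a_\ell=o(2^N)$. Some argument of this explicit, quantitative kind has to replace your appeal to quasi-stationarity before your outline is a proof; once (i) and (ii) are in place, your conclusion follows exactly as in the paper.
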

\begin{proof}
The visits at 0 of the chain $Z$ are the times when the front fails to move 
one step. Thus,
$$
\Phi(X(t))= \Phi(X(0))+ \sum_{s=1}^t {\bf 1}_{Z(s)\neq 0},
$$
which implies by dividing by $t$ and letting $t \to \8$, that
$$
v_N=\bar \nu_N(Z \neq 0)=1-\bar \nu_N(Z = 0),
$$
where $\bar \nu_N$ denotes the invariant (ergodic) distribution of the chain $Z$.
Let $E_N, P_N$ refer to the chain starting at $N$, and $T_k=\inf\{
t \geq 1: Z(t)=k\}$ the time of first visit at $k\; (0 \leq k \leq N)$. 
By Kac's lemma, we can express the invariant distribution, and get:
\begin{equation}
  \label{eq:v_NT_0}
v_N = 1 - (E_0 T_0)^{-1}= 1 - (E_N T_0)^{-1}.
\end{equation}
Let $\sigma_0=0$, and $\sigma_1, \sigma_2\ldots$ the successive passage times
of $Z$ at $N$, and 
${\mathcal N}=\sum_{i \geq 0} {\bf 1}_{\sigma_i < T_0}$ the number of 
visits at $N$ before hitting $0$. 
Note that ${\mathcal N}$ has a geometric law with 
expectation $E_N {\mathcal N}=P_N(T_0 < T_N)^{-1}$.
Then,
\begin{eqnarray}  \nonumber
E_N T_0 &=&
E_N \left[\sum_{i \geq 1} (\sigma_{i}-\sigma_{i-1}) {\bf 1}_{\sigma_i < T_0}
+ (T_0 - \sigma_{\mathcal N}) \right]\\  \nonumber
 &=&
\sum_{i \geq 1}
E_N \left[ (\sigma_{i}-\sigma_{i-1}) {\bf 1}_{\sigma_i < T_0} \right]
+ E_N (T_0 - \sigma_{\mathcal N}) 
\\   \nonumber
 &=&
\sum_{i \geq 1}
E_N \left[ {\bf 1}_{\sigma_{i-1} < T_0} E_N \big( 
 \sigma_1 {\bf 1}_{\sigma_1 < T_0}\big)\right]
+  E_N (T_0 \vert T_0 < T_N) \qquad {\rm (Markov\ property)}
\\  \nonumber
 &=&
E_N [ {\mathcal N}]
\times
E_N \big( 
 \sigma_1 {\bf 1}_{\sigma_1 < T_0}\big)
+  E_N (T_0 \vert T_0 < T_N) \\
&=& \frac{1-P_N(T_0 < T_N) 
}{P_N(T_0 < T_N) }
\times
E_N \big( 
 T_N \vert {T_N < T_0}\big)
+  E_N (T_0 \vert T_0 < T_N) \label{eq:xyz}
\end{eqnarray}
We will prove a Lemma.

\begin{lem} \label{lem:tps}
We have
 \begin{equation}
  \label{eq:tps1}
P_N(T_0 < T_N) \sim q^{N^2} 2^N,
\end{equation}
as $N$ tends to $\8$. Moreover,
 \begin{equation}
  \label{eq:tps2}
\lim_{N \to \8}  E_N (T_0 \vert T_0 < T_N)= 2,
\end{equation}
 \begin{equation}
  \label{eq:tps3}
\lim_{N \to \8}
E_N \big( 
T_N \vert {T_N < T_0}
\big)
=1.
\end{equation}
\end{lem}
The lemma has a flavor of Markov chains with rare transitions considered in
\cite{CaCe95/97}, except for the state space which is getting here
larger and 
larger in the asymptotics.
With the lemma at hand,  we conclude that
\begin{eqnarray*}
E_N T_0 &\sim& \frac{1}{P_N(T_0 < T_N)} \\
&\sim& \frac{1}{q^{N^2} 2^N}
\end{eqnarray*}
as $N$ tends to $\8$. From (\ref{eq:v_NT_0}), this implies the
statement of the theorem.
\end{proof}


\begin{proof} of lemma \ref{lem:tps}.
We start to prove the key relation (\ref{eq:tps1}).
We decompose the event $\{T_0 < T_N\}$ according to the number
$\ell$ of steps to reach 0 from state $N$,
 \begin{equation}
  \label{eq:obv1}
P_N(T_0 < T_N) = \sum_{\ell \geq 1}
P_N(T_0=\ell < T_N).
\end{equation}
We directly compute
the contribution of $\ell=1$:
By (\ref{eq:defZ0}), we have
 \begin{equation}
  \label{eq:obv2}
P_N(T_0=1 < T_N)=q^{N^2},
\end{equation}
which is neglegible in front the right-hand side of  (\ref{eq:tps1}).
We compute now the contribution of strategies in two steps:
\begin{eqnarray}  \nonumber
P_N(T_0=2 < T_N) 
&=&
\sum_{k=1}^{N-1}
P_N(T_0=2 < T_N, Z(1)=k) \\ \nonumber
&=&
\sum_{k=1}^{N-1}
\begin{pmatrix}
N\\k
\end{pmatrix}
(1-q^N)^k q^{N(N-k)} \times \begin{pmatrix}
N\\0
\end{pmatrix}
(1-q^k)^0 q^{kN} \\ \nonumber
&=& q^{N^2}
\sum_{k=1}^{N-1}
\begin{pmatrix}
N\\k
\end{pmatrix}
(1-q^N)^k \\  \nonumber
&=&  q^{N^2} \big[ (2-q^N)^N-1-(1-q^N)^N\big]
\\
  \label{eq:obv3}
&\sim& q^{N^2} 2^N.
\end{eqnarray}

For $\ell \geq 2$ we write, with the convention that $k_0=N$,
\begin{eqnarray}  \nonumber
P_N(T_0=\ell+1 < T_N)
&=&
\sum_{k_1, \ldots k_\ell=1}^{N-1}
P_N(T_0=\ell+1 < T_N, Z(i)=k_i, i=1,\ldots \ell) \\ \nonumber
&=&
\sum_{k_1, \ldots k_\ell=1}^{N-1} \Big[ \prod_{i=1}^\ell
\begin{pmatrix}
N\\k_i
\end{pmatrix}
(1-q^{k_{i-1}})^{k_i} q^{k_{i-1}(N-k_i)} \Big]
q^{k_\ell N} 
\qquad ({\rm by\ } (\ref{eq:defZ0}))
\\ \nonumber
&\leq&
\sum_{k_1, \ldots k_\ell=1}^{N-1} \Big[ \prod_{i=1}^\ell
\begin{pmatrix}
N\\k_i
\end{pmatrix}
 q^{k_{i-1}(N-k_i)} \Big]
q^{k_\ell N} \\ \nonumber
&=&
q^{N^2}
\sum_{k_1, \ldots k_\ell=1}^{N-1} \prod_{i=1}^\ell
\begin{pmatrix}
N\\k_i
\end{pmatrix}
q^{k_{i}(N-k_{i-1})} \qquad \qquad \qquad{({\rm since \ }k_0=N)}
 \\ \label{eq:prberrec}
&=:& q^{N^2} a_\ell,
\end{eqnarray}
which serves as definition of $a_\ell=a_\ell(N)$. For $\eps \in (0,1)$, 
define also
$b_\ell= b_\ell(\eps, N)$ by
$$
b_\ell=
\sum_{
1 \leq k_1, \ldots k_\ell \leq N-1,
k_\ell > (1-\eps)N
}
\;
\prod_{i=1}^\ell
\begin{pmatrix}
N\\k_i
\end{pmatrix}
q^{k_{i}(N-k_{i-1})} 
$$
Then, by summing over $k_\ell$,
\begin{eqnarray}  \nonumber
a_\ell
&=&
\sum_{1 \leq k_1, \ldots k_{\ell -1} \leq N-1}
\Big[ \prod_{i=1}^{\ell -1}
\begin{pmatrix}
N\\k_i
\end{pmatrix}
 q^{k_{i}(N-k_{i-1})} \Big]
 \big[ (1+q^{N-k_{\ell-1}})^N -1 - q^{N(N-k_{\ell-1})}\big]
\\ \nonumber
&\leq&
\sum_{1 \leq k_1, \ldots k_{\ell -1} \leq N-1}
\Big[ \prod_{i=1}^{\ell -1}
\begin{pmatrix}
N\\k_i
\end{pmatrix}
 q^{k_{i}(N-k_{i-1})} \Big]
 \big[ (1+q^{N-k_{\ell-1}})^N -1\big]\\   \nonumber
&=& 
\sum_{ k_{\ell -1} \leq (1-\eps)N} \qquad
+ \qquad
\sum_{ k_{\ell -1} > (1-\eps)N}
\\
\label{eq:344-5}
&\leq&
\gamma_N \; a_{\ell-1} + (1+q)^N b_{\ell-1},
\end{eqnarray}
with
$$
\gamma_N=\gamma_N(\eps) \stackrel{\rm def.}{=} \left(1+q^{N\eps}\right)^N -1 
\sim N q^{N\eps}
$$
as $N \to \8$. 
We now bound $b_\ell$ in a similarly manner. First we note that, for any 
$\eta$ such that
$\eta > -\eps \ln ( \eps) - (1-\eps) \ln  (1-\eps)>0$, we have 
$$\sum_{k_\ell > (1-\eps)N} \begin{pmatrix}
N\\k_\ell \end{pmatrix} \leq \exp N \eta \qquad {\rm for\ large\ }N. $$
Note also that we can make $\eta$ small by choosing $\eps$ small.
Then,
\begin{eqnarray}  \nonumber
b_\ell
&\leq &
\sum_{1 \leq k_1, \ldots k_{\ell-1} \leq N-1}\;  \left[
\prod_{i=1}^{\ell-1}
\begin{pmatrix}
N\\k_i
\end{pmatrix}
q^{k_{i}(N-k_{i-1})} \right]
e^{ N \eta}
q^{(1-\eps)N(N-k_{\ell-1})} \\  \nonumber
&=& \sum_{ k_{\ell-1} \leq (1-\eps)N} \qquad
+ \qquad
\sum_{ k_{\ell-1} > (1-\eps)N}
\\ \label{eq:novo}
&\leq &
q^{\eps(1-\eps)N^2} e^{ N \eta} a_{\ell-1} + 
q^{(1-\eps)N} e^{ N \eta} b_{\ell-1}.
\end{eqnarray}
For vectors $u, v$, we write  $u \leq v$ if the inequality holds 
coordinatewise. In view of (\ref{eq:344-5}) and (\ref{eq:novo}),
we finally have
\begin{equation} \label{eq:prodmatr}
\begin{pmatrix}
a_\ell\\b_\ell
\end{pmatrix}
\leq 
M 
\begin{pmatrix}
a_{\ell-1}\\b_{\ell-1}
\end{pmatrix}
\leq \ldots \leq 
M^{\ell -2} 
\begin{pmatrix}
a_{2}\\b_{2}
\end{pmatrix},
\end{equation}
where the matrix $M$ is positive and given by 
$$M=\begin{pmatrix}
\gamma_N & (1+q)^N 
\\
q^{\eps(1-\eps)N^2} e^{ N \eta} &
q^{(1-\eps)N} e^{ N \eta} 
\end{pmatrix}$$
We easily check that, for $\eps$ and $\eta$ small, $M$ has positive, real 
eigenvalues, and
the largest one $\la_+=\la_+(N, \eps, \eta)$
is such that $\la_+ \sim \gamma_N$ as $N \to \8$. By (\ref{eq:prodmatr}),
$$
a_\ell \leq \la_+^{\ell -2} \big( a_2 + b_2),
$$
and since $\la_+= \la_+(N,\eps,\eta)<1$ for large $N$, 
\begin{equation} \label{eq:serie}
\sum_{\ell \geq 2} a_\ell \leq \frac{a_2+ b_2}{1 -\la_+}.
\end{equation}
Now, we estimate $a_2=a_2$ and $b_2$, both of which depend on $N$:
\begin{eqnarray*}
a_2 &=& 
\sum_{1 \leq k_1, k_2 \leq N-1}
\begin{pmatrix}
N\\k_1
\end{pmatrix}
\begin{pmatrix}
N\\k_2
\end{pmatrix}
 q^{k_{2}(N-k_{1})} \\
 &\leq& 
\sum_{1 \leq k_1 \leq N-1} \begin{pmatrix}
N\\k_1
\end{pmatrix}
\big[ (1+q^{N-k_1})^N-1\big]\\
 &=& \sum_{ k_1 \leq (1-\eps)N} \qquad
+ \qquad
\sum_{ k_1 > (1-\eps)N}\\
 &\leq& 
\gamma_N 2^N + (1+q)^N e^{N \eta},
\end{eqnarray*}
and 
$$b_2 \leq  
q^{(1-\eps)N} \sum_{1 \leq k_1, k_2 \leq N-1, k_2 > (1-\eps) N}
\begin{pmatrix}
N\\k_1
\end{pmatrix}
\begin{pmatrix}
N\\k_2
\end{pmatrix}
\leq
2^N q^{(1-\eps)N}e^{N \eta}.
$$
From (\ref{eq:serie}), we see that
$$
\sum_{\ell \geq 2} a_\ell(N) = o( 2^N),
$$
and, together with (\ref{eq:prberrec}), (\ref{eq:obv1}), (\ref{eq:obv2}),
(\ref{eq:obv3}), it implies   (\ref{eq:tps1}). 
\medskip

The limit (\ref{eq:tps2}) directly follows from the above estimates. 

\medskip

Finally, we turn to the proof of (\ref{eq:tps3}). 
Note that
\begin{equation}
\label{eq:s2-1} 
P_N ({T_N < T_0}) \leq  E_N \big( 
 T_N  {\bf 1}_{ T_N < T_0}\big) \leq 
 E_N \big( T_N \big) =
 \Big(1-(1-p)^N\Big)^N +  E_N \big( 
 T_N  {\bf 1}_{T_N  >1}\big) \;.
\end{equation}
We only need to show that the last term is exponentially small.
For that, we use Markov property at time 1,
$$
E_N \big( 
 T_N  {\bf 1}_{T_N  >1}\big) \leq 
 \Big[1- \Big(1-(1-p)^N\Big)^N \Big] \Big( 1+ \max_m E_m(T_N )\Big),
$$
where the first factor is exponentially small.  To show that the second factor is bounded, 
one can repeat the proof of  part a) with $p=1$ of the forthcoming  Lemma \ref{bound-moment}.
\end{proof}

\section{The case of variables taking a countable
number of values}   \label{sec:trinomial}

In this section we consider the case of a random variable $\xi$
taking the values $\mathbb N_k:=\{l\in\mathbb Z:l\le k\}$, with $k\in {\mathbb Z}$, so that
\begin{equation}
\label{xi}
\P( \xi_{i,j}(t)=l)=p_l,
\end{equation}
for $l\in\mathbb N_k$,
with $p_l \geq 0, p_k \in (0,1)$ and $\sum_{l\in\mathbb N_k} p_l=1$.
As in the Bernoulli case we can reduce the process $X^0$ to a simpler 
one given by $Z(t):=(Z_l(t):l\in\mathbb N_k )$, where
\begin{equation}
  \label{eq:defZ}
Z_l(t)= \sharp \big\{j: 1 \leq j \leq N, X_j(t)=\max\{X_i(t-1); 
1\le i \leq N\} + l \big\},
\end{equation}
for $l\in \mathbb N_k$. Note that $Z_k(t)$  is equal to the number of leaders if the front has
moved $k$ steps forward at time $t$, and to 0 if the front 
moved less than $k$ steps.
$Z$ is a Markov chain on the set 

$$
\Omega_k:=
\left\{m\in \{0,\ldots,N\}^{\mathbb N_k}:\sum_{i\in\mathbb N_k} m_i=N\right\},
$$
where $m_i$ are the coordinates of $m$.
We now proceed to compute the
 transition
probabilities of the Markov chain $Z$.
Assume that at some time $t$ we have
$Z_t=m=(m_i:i\in\mathbb N_k)$. For each $i\in\mathbb N_k$,  
this corresponds to $m_i$ particles
at position $i$. Let us now move each particle to the right adding
independently a random variable with law $\xi_{0,0}$.
We will assume that $m_k\ge 1$.
The probability that  at time $t+1$ there is some particle
at position $k$ 
 is
$$
s_k(m):=1-\left(\sum_{l=-\infty}^{k-1}p_l\right)^{m_k}.
$$
Similarly, the probability that the rightmost particle
at time $t+1$ is at position $k-1$ is
$$
s_{k-1}(m):=\left(\sum_{l=-\infty}^{k-1}p_l\right)^{m_k}-
\left(\sum_{l=-\infty}^{k-1}p_l\right)^{m_{k-1}}
\left(\sum_{l=-\infty}^{k-2}p_l\right)^{m_k}.
$$
In general, for $r\in\mathbb N_k$, the
probability that at time $t+1$ the rightmost
particle is at position $r$ is

\begin{eqnarray}
\nonumber
s_r(m)&:=&
\left(\sum_{l=-\infty}^{k-1}p_l\right)^{m_{r+1}}
\cdots
\left(\sum_{l=-\infty}^{r}p_l\right)^{m_{k}}\\
\nonumber
&&\qquad -
\left(\sum_{l=-\infty}^{k-1}p_l\right)^{m_{r}}
\cdots
\left(\sum_{l=-\infty}^{r}p_l\right)^{m_{k-1}}
\left(\sum_{l=-\infty}^{r-1}p_l\right)^{m_{k}}.
\end{eqnarray}
Define now on $\Omega_k$
the shift $\theta m$ by $(\theta m)_i=m_{i-1}$ for $i\in\mathbb N_k$.
For $r\in\mathbb N_k$, let
$s^{(1)}_r(m):=s_r(\theta m)$ and in general for $j\ge 1$ let

$$
s^{(j)}_r(m):=s_r(\theta^{j}m).
$$
Define $s(m):=(s_r(m):r\in\mathbb N_k)$, for $j\ge 1$,
$s^{(j)}(m):=(s^{(j)}_r(m):r\in\mathbb N_k)$.
Dropping the dependence on $m$ of $s_r$, $s^{(j)}_r$, $s$ and $s^{(j)}$ we can
now write the transition probabilities of
the process $Z(t)$ as

\begin{eqnarray}
\nonumber
&\!\!\!\!\!\!\!\!\!\!\!\!\!\!\!\!\!\!\!\!\!\!\!\!\!\!\!\!\!\!\!\!\!\!\!\!\!\!\!\!\!\!\!\!\!\!\!\!\!\!\!\!\!\!\!\!\!\!\!\!\!\!\!\!\!\!\!\!\!\!\!\!
\!\!\!\!\!\!\!\!\!\!\!\!\!\!\!\!\!\!\!\!\!\!\!\!\!\!\!\!\!\!\!\!\!\!\!\!\!\!\!\!\!\!\!\!\!\!\!\!\!\!\!\!\!\!\!\!\!\!\!
\P\big( Z(t+1)= n\vert Z(t)=m\big) \qquad\qquad
 \qquad\qquad\qquad\qquad\\ 
 \qquad\qquad
  \label{eq:defQ}
&  \qquad= \qquad
\left\{
\begin{array}{ll}
{\mathcal M} \big(N; s\big)(n), & m_k \geq 1,\\
{\mathcal M} \big(N; s^{(1)}\big)(n), & m_{k-1}\ge 1, m_k
=0,\\
{\mathcal M} \big(N; s^{(2)}\big)(n), & m_{k-2}\ge 1, m_k=m_{k-1} =0,\\
\ldots & \ldots\\ 
{\mathcal M} \big(N; s^{(j)}\big)(n), &  m_{k-j}\ge 1,m_{k}=m_{k-1}=\cdots=m_{k-j+1}=0,
\end{array}
\right.
\end{eqnarray}
where for $u_i$ with $\sum_i u_i=1$, ${\mathcal M} \big(N; u \big)$ denotes the multinomial distribution (with infinitely many classes).
Let us introduce the following notation

$$
r_i:=\sum_{j=-\infty}^{k-i}p_j,
$$
for integer $i\ge 1$.

\smallskip

\noindent {\bf Assumption (R)}. We say that a random
variable $\xi$ distributed according to (\ref{xi}) 
satisfies assumption (R) if 

$$
p_k \times p_{k-1}>0
$$
and
$$
\mathbb E(|\xi_{0,0}|)<\infty.
$$

\smallskip

\noindent We can now state the main result of this section.

\begin{teo} 
\label{th:multinomial}
Let $\xi$ be distributed according to
(\ref{xi}) and suppose that it satisfies
assumption  (R). Then, we have that
\begin{equation}
  \label{eq:multinomial}
v_N = k - q_k^{N^2} 2^N + o(q_k^{N^2} 2^N),
\end{equation}
as $N \to \8$, 
where $q_k:=1-p_k$.
\end{teo}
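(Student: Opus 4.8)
The plan is to run the argument of the Bernoulli case (Theorem \ref{th:bernoulli}) on the multinomial chain $Z$. Put $\Phi(X(t))=\max_{j}X_{j}(t)$; since $\xi\le k$ a.s., the one-step displacement $W(s):=\Phi(X(s))-\Phi(X(s-1))=\max\{l:Z_{l}(s)\ge1\}$ is at most $k$, and equals $k$ exactly when $Z_{k}(s)\ge1$. Thus the deficit $k-W(s)$ is carried by the set $\mathcal Z_{0}:=\{m\in\Omega_{k}:m_{k}=0\}$ of configurations from which the front advanced by strictly less than $k$, and the renewal--reward theorem applied at the single state $\mathbf N:=\{m\in\Omega_{k}:m_{k}=N\}$ (``all $N$ particles at the leading edge''; a single state since $\sum_{l}m_{l}=N$) gives
\[
k-v_{N}=\frac{\E_{\mathbf N}\big[\textstyle\sum_{s=1}^{T_{\mathbf N}}(k-W(s))\big]}{\E_{\mathbf N}[T_{\mathbf N}]},\qquad T_{\mathbf N}=\inf\{t\ge1:Z(t)=\mathbf N\}
\]
(that $v_{N}$ is well defined and this identity holds uses $\xi\in L^{1}$, i.e.\ the integrability in (R); cf.\ Corollary \ref{cor:speed}).

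The point that lets us import the Bernoulli estimates is that, \emph{as long as} $Z_{k}\ge1$, the coordinate $Z_{k}(\cdot)$ is itself Markov with exactly the Bernoulli transition of parameter $q_{k}$: from a state with $m_{k}=c\ge1$ the class-$k$ marginal of (\ref{eq:defQ}) is $Z_{k}(t+1)\sim\mathcal B(N,1-q_{k}^{\,c})$, i.e.\ (\ref{eq:defZZ0}) with $q$ replaced by $q_{k}$. Moreover $Z_{k}(t)=N\iff Z(t)=\mathbf N$ and $Z_{k}(t)=0\iff Z(t)\in\mathcal Z_{0}$, so, started from $\mathbf N$, the first hitting times of $\mathcal Z_{0}$ and of $\mathbf N$ coincide with the hitting times $T_{0},T_{N}$ of the Bernoulli-$q_{k}$ chain started from $N$. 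Hence Lemma \ref{lem:tps} applies verbatim: $\P_{\mathbf N}(T_{\mathcal Z_{0}}<T_{\mathbf N})\sim q_{k}^{N^{2}}2^{N}$, $\E_{\mathbf N}(T_{\mathcal Z_{0}}\mid T_{\mathcal Z_{0}}<T_{\mathbf N})\to2$, $\E_{\mathbf N}(T_{\mathbf N}\mid T_{\mathbf N}<T_{\mathcal Z_{0}})\to1$. In particular an $\mathbf N$-excursion follows the Bernoulli dynamics up to the first stall, stalls occur at rate $\sim q_{k}^{N^{2}}2^{N}$, and $\E_{\mathbf N}[T_{\mathbf N}]\to1$ (the chain returns to $\mathbf N$ in one step with probability $(1-q_{k}^{N})^{N}\to1$, the remaining excursions having expected length bounded uniformly in $N$ by a moment estimate of the type of Lemma \ref{bound-moment}).

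It remains to show that a cycle that stalls accumulates, in expectation, a deficit $1+o(1)$ rather than something of order $k$. At the first stall the front moves by exactly $k-1$ with overwhelming probability: from a state with $m_{k}=c\ge1$, the conditional probability of advancing by $\le k-2$ given advancing by $<k$ is at most $(r_{2}/q_{k})^{cN}\le(r_{2}/q_{k})^{N}\to0$, since (R) forces $p_{k-1}>0$ and hence $r_{2}=q_{k}-p_{k-1}<q_{k}$; summed over excursions this bounds the ``excess'' term $\E_{\mathbf N}\big[\sum_{s\le T_{\mathbf N}}(k-1-W(s))\mathbf{1}_{W(s)<k-1}\big]$ by $o(q_{k}^{N^{2}}2^{N})$. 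After the stall the particles are, with probability $1-o(1)$, bunched near the new leading edge, whence the dynamics is again that of the leading-edge chain via the shift relation $s^{(j)}(m)=s(\theta^{j}m)$ and the front advances by $k$ at the next step with probability $1-q_{k}^{N^{2}}$; so the expected number of further stalls in the same cycle is $o(1)$. The exceptional spread-out post-stall configurations, which arise with probability $o(1)$, re-coalesce at a leading edge within $O(1)$ expected steps and with $O(1)$ expected extra deficit, so contribute $o(1)$ overall; this is the one place where a priori moment bounds uniform in $N$ are needed (Lemma \ref{bound-moment}), the integrability in (R) securing their finiteness and rendering the super-exponentially rare backward moves (the support of $\xi$ being unbounded below) negligible. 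Then the numerator above is $\sim q_{k}^{N^{2}}2^{N}$ and the denominator $\to1$, giving (\ref{eq:multinomial}).

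The main obstacle should be precisely this last step. In the Bernoulli model the post-stall configuration is the single state $0$, which has the same law of future as $N$, so no extra bookkeeping is required; here $\mathcal Z_{0}$ is a large part of $\Omega_{k}$ and one must control the full occupation profile $(Z_{l}(\cdot))_{l\le k}$ at the stalling time---concentrated enough to override the combinatorial factor $2^{N}$ in (\ref{eq:multinomial}), yet cheap enough on the exceptional event. The matrix/recursion estimate behind Lemma \ref{lem:tps} (inequality (\ref{eq:prodmatr})) should again serve as the template, now applied to the profile $(Z_{l})_{l\le k}$ rather than to a single leader count, together with uniform-in-$N$ moment bounds on excursion lengths.
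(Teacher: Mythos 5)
Your proposal is correct, and it organizes the argument differently from the paper while sharing its analytic core. The paper does not use a renewal--reward identity at $\oplus$: it first writes $k-v_N=\sum_{j\ge1}\bar\nu_N(\phi\le k-j)$ in terms of the invariant law of $Z$ (Lemma \ref{veloc}), disposes of the terms $j\ge2$ via the one-step comparison $\bar\nu_N(\phi\le k-i)\le(r_i/r_1)^N\,\bar\nu_N(\phi\le k-1)$ combined with $\sum_i r_i<\infty$ from (R) (Lemmas \ref{l2} and \ref{on}), and then evaluates $\bar\nu_N(\phi\le k-1)=\sum_{n\in\Omega_k^0}1/E_n(T_n)$ by Kac's formula, comparing each $E_n(T_n)$ with $P_\oplus(T_A<T_\oplus)$ through the two-sided estimates of Lemmas \ref{ll1}, \ref{bound-moment} and \ref{red-bern} (including a Cauchy--Schwarz step). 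Your excursion decomposition at $\oplus$ short-circuits the per-state Kac formula and the $E_n(T_n)$ bookkeeping, but it requires exactly the uniform ingredients the paper establishes: the identification of the $Z_k$-marginal with the Bernoulli-$q_k$ chain up to $\min(T_A,T_\oplus)$, so that Lemma \ref{lem:tps} applies (the paper uses this identically to evaluate $P_\oplus(T_A<T_\oplus)$); the uniform estimate $\sup_m P_m(T_\oplus>2)\le e^{-CN}$ (the paper's bound (\ref{upb})), which is what makes the expected number of further stalls per cycle $o(1)$ uniformly over post-stall states (the content of Lemma \ref{red-bern}), rather than your conditional statement about typical bunched configurations; and the deficit-tail estimate, where for the expected excess you need the full sum $\sum_{i\ge2}(r_i/r_1)^N$, not only the $i=2$ term you write down --- this is precisely where the integrability in (R) enters, via $\sum_i r_i<\infty$ as in Lemma \ref{on}, and your sketch only gestures at it. With these pieces your numerator is $(1+o(1))P_\oplus(T_A<T_\oplus)$ and your denominator is $1+O(e^{-CN})$, yielding (\ref{eq:multinomial}); your route is arguably leaner, while the paper's yields in addition the invariant-measure asymptotics $\bar\nu_N(\phi\le k-1)\sim q_k^{N^2}2^N$ (Proposition \ref{propo}) as an intermediate result.
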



\subsection{Proof of Theorem \ref{th:multinomial}}
To prove  Theorem \ref{th:multinomial}, we will follow a strategy similar
to the one used in the Bernoulli case. Let us first define for each
$m=(m_i:i\in\mathbb N_k)\in\Omega_k$ the function

\begin{equation}
\label{phi}
\phi=\phi(m):=\sup\{i\in\mathbb N_k:m_i>0\}.
\end{equation}
As in the Bernoulli case, we denote by $\bar\nu_N$
the invariant (ergodic) distribution of the chain $Z$.
\begin{lem}
\label{veloc}
Let $\xi$ be distributed according to
(\ref{xi}). Then, we have that

\begin{equation}
\label{veloc2}
v_N=k-\bar\nu_N(\phi\le k-1)-\sum_{j=2}^\infty \bar\nu_N(\phi\le k-j).
\end{equation}
\end{lem}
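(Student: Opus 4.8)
The plan is to mimic the derivation of the Bernoulli identity $v_N = \bar\nu_N(Z\neq 0)$ from Section \ref{sec:bernoulli}, but now keeping track of how many of the $k$ possible unit steps the front actually makes at each time. First I would record the analogue of the displayed formula $\Phi(X(t))=\Phi(X(0))+\sum_{s=1}^t {\bf 1}_{Z(s)\neq 0}$: here the increment of the front location $\Phi(X(t))=\max_j X_j(t)$ between times $t-1$ and $t$ is exactly $\phi(Z(t))$, the position of the rightmost occupied site relative to the previous leader, since by (\ref{eq:defZ}) a configuration $m$ corresponds to the front having advanced by $\phi(m)$ if $m$ is the state reached. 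Hence
\begin{equation}
\Phi(X(t)) = \Phi(X(0)) + \sum_{s=1}^t \phi(Z(s)),
\end{equation}
and dividing by $t$, letting $t\to\8$, and invoking ergodicity of $Z$ (its invariant distribution $\bar\nu_N$ exists by Proposition \ref{one} applied to the reduced chain, or directly since $Z$ is an irreducible finite-state chain on $\Omega_k$) gives $v_N = \E_{\bar\nu_N}\phi = \sum_{i\in\mathbb N_k} i\,\bar\nu_N(\phi = i)$.

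The second step is purely algebraic: rewrite $\E_{\bar\nu_N}\phi$ as $k$ minus a sum of tail terms. Since $\phi \le k$ always (the support of $\xi$ is $\mathbb N_k$, so no particle can overtake the previous leader by more than $k$) and $\bar\nu_N$ is a probability measure, write $k - \phi = \sum_{j\ge 1} {\bf 1}_{\phi \le k-j}$, whence
\begin{equation}
v_N = k - \E_{\bar\nu_N}\!\big(k-\phi\big) = k - \sum_{j\ge 1} \bar\nu_N(\phi \le k-j) = k - \bar\nu_N(\phi\le k-1) - \sum_{j=2}^\8 \bar\nu_N(\phi\le k-j),
\end{equation}
which is exactly (\ref{veloc2}). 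One should note that the sum is finite termwise and in fact the tail is summable because $\bar\nu_N(\phi\le k-j)$ is eventually $0$ when $\mathbb N_k$ is genuinely infinite only if $\xi$ has bounded support; in general integrability of $\xi$ (part of Assumption (R), though the lemma is stated without it) is what guarantees $\E_{\bar\nu_N}\phi$ is well-defined and the manipulation is legitimate — I would remark that $|\phi|$ is dominated in $L^1(\bar\nu_N)$ by the same renewal argument used after Remark \ref{rk:1} to show $\Phi(Y(t))\in L^p$.

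The only genuine point requiring care — the ``main obstacle,'' though it is mild — is justifying the exchange of limit and the passage to the invariant measure, i.e.\ that $t^{-1}\sum_{s=1}^t \phi(Z(s)) \to \E_{\bar\nu_N}\phi$ almost surely, and simultaneously that this limit equals the speed $v_N$ defined in Corollary \ref{cor:speed} via $t^{-1}\max_i X_i(t)$. The first is Birkhoff's ergodic theorem for the positive recurrent chain $Z$ (with $L^1$ integrability of $\phi$ as just discussed), and the identification with $v_N$ follows because $\Phi(X(t)) = \max_i X_i(t)$ by our choice of $\Phi$ in this section, together with the fact established in Section \ref{sec:fixedN} that all reasonable front locations share the same speed. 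I would also remark, as the paper does after Lemma \ref{lem:tps}, that the chain $Z$ started from the all-leaders configuration or from any configuration with $m_k\ge 1$ has the same law on the relevant part of $\Omega_k$, so the choice of initial condition is immaterial. With these observations (\ref{veloc2}) follows, completing the proof of Lemma \ref{veloc}.
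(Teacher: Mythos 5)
Your proposal is correct and follows essentially the same route as the paper: the telescoping identity $\Phi(X(t))=\Phi(X(0))+\sum_{s\le t}\phi(Z(s))$ with $\Phi(x)=\max_i x_i$, ergodicity of the reduced chain $Z$ to get $v_N=\sum_i i\,\bar\nu_N(\phi=i)$, and the rewriting $k-\phi=\sum_{j\ge 1}{\bf 1}_{\phi\le k-j}$. Your added remarks on integrability and on identifying the limit with the speed of Corollary \ref{cor:speed} only make explicit what the paper leaves implicit.
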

\begin{proof}
Let $\Phi(x)=\max\{x_i; i \leq N\}$, and  note that for every positive integer time  $t$

$$
\Phi(X(t))=\Phi(X(0))+\sum_{i=1}^t
\phi(Z(i)).
$$
Hence
\begin{eqnarray}
v_N  = \sum_{i \in \N_k}i\bar\nu_N(\phi=i) =
\label{expansion}
 k-\sum_{j=1}^\infty \bar\nu_N(\phi\le k-j).
\end{eqnarray}
\end{proof}

\noindent
We will now show that the first two terms of the expression
for the velocity (\ref{veloc2}) given in Lemma \ref{veloc}, dominate the others.
\begin{lem}
\label{l2}  Let $\xi$ be distributed according to
(\ref{xi}). Then, for each $i\ge 2$ we have that

$$
\bar\nu_N(\phi\le k-i)\le \left(\frac{r_i}{r_1}\right)^N\bar\nu_N(\phi\le k-1).
$$

\end{lem}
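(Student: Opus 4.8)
The plan is to compare the stationary mass that the chain $Z$ puts on the "low" sets $\{\phi \le k-i\}$ with the mass it puts on $\{\phi \le k-1\}$, by exploiting the explicit transition probabilities in (\ref{eq:defQ}) together with the stationarity equation $\bar\nu_N = \bar\nu_N P$. The key observation is that, because $\xi$ is supported on $\mathbb N_k$ and $p_k = 1-q_k$, a one-step transition from any state $m$ can lower the leading coordinate $\phi$ only through the factors $r_1,r_2,\dots$: to land in $\{\phi(n) \le k-i\}$ one needs every one of the $N$ updated particles to fall at least $i$ below the current maximum, and conditionally on the current configuration this has probability at most $r_i^N$ (more precisely it is governed by products of the $r$'s via the functions $s_r$), whereas landing merely in $\{\phi(n)\le k-1\}$ costs only $r_1^N$. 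So first I would record the pointwise bound
\beq
\P\big(\phi(Z(t+1)) \le k-i \mid Z(t)=m\big) \le \Big(\frac{r_i}{r_1}\Big)^N\, \P\big(\phi(Z(t+1)) \le k-1 \mid Z(t)=m\big),
\eeq
valid for every $m \in \Omega_k$ and every $i \ge 2$. This is the heart of the matter and should follow by writing each side as a sum of multinomial probabilities $\sum_{n: \phi(n)\le k-i} \mathcal M(N;s^{(j)})(n)$ and comparing them factor by factor, using $r_{a} \le r_b$ for $a\ge b$ and the fact that the shifted vectors $s^{(j)}$ only make the comparison more favorable.

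Second, I would integrate this inequality against $\bar\nu_N$. By stationarity,
\beqnn
\bar\nu_N(\phi \le k-i) &=& \sum_{m \in \Omega_k} \bar\nu_N(m)\, \P\big(\phi(Z(t+1)) \le k-i \mid Z(t)=m\big)\\
&\le& \Big(\frac{r_i}{r_1}\Big)^N \sum_{m \in \Omega_k} \bar\nu_N(m)\, \P\big(\phi(Z(t+1)) \le k-1 \mid Z(t)=m\big)\\
&=& \Big(\frac{r_i}{r_1}\Big)^N\, \bar\nu_N(\phi \le k-1),
\eeqnn
which is exactly the claimed inequality. The only subtlety here is that the transition kernel in (\ref{eq:defQ}) is defined by cases according to which of $m_k, m_{k-1},\dots$ vanish; but the pointwise bound of the first step is asserted uniformly over all these cases, so the averaging goes through unchanged.

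The main obstacle I expect is the verification of the pointwise one-step bound, i.e. making the "factor by factor" comparison of the two multinomial sums rigorous. One has to be careful that $\{\phi(n)\le k-i\}$ is the event that the top $i$ coordinates $n_k=n_{k-1}=\cdots=n_{k-i+1}=0$, and to express $\P(\phi(Z(t+1))\le k-j' \mid Z(t)=m)$ cleanly: in fact this probability equals $\big(\sum_{l\le k-j'} p_l^{\,?}\big)$-type expression — more precisely, the probability that all $N$ particles land at height $\le k-j'$ relative to the previous maximum, which, since the previous maximum is attained at coordinate $\phi(m)$, is a product over the occupied levels of $m$ of terms of the form $\big(\sum_{l \le k-j'-(\text{offset})} p_l\big)^{m_{\text{level}}}$, each of which is bounded by $r_{j'}^{m_{\text{level}}}$ and bounded below by a comparable quantity. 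Dividing the $i$-bound by the $1$-bound then produces the ratio $(r_i/r_1)^N$ after using $\sum_{\text{levels}} m_{\text{level}} = N$. Once this elementary but slightly fiddly estimate on products is in place, the rest is immediate. (Note $r_1 = q_k = 1-p_k > 0$ and, under assumption (R), $r_2 < r_1$, so the ratio is genuinely $<1$, which is what makes the lemma useful in the sequel.)
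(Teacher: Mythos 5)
Your overall architecture is the right one, and it is essentially the paper's: establish the pointwise one-step estimate $P_m\big(\phi(Z(1))\le k-i\big)\le (r_i/r_1)^N\,P_m\big(\phi(Z(1))\le k-1\big)$ uniformly in $m\in\Omega_k$, then average it out. (The paper averages against $P_m(Z(t-1)=\cdot)$ and lets $t\to\infty$ via Proposition \ref{one}; you invoke invariance of $\bar\nu_N$ directly, which is an equivalent and slightly cleaner finish.) So the second half of your argument is fine.

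The gap is in your sketched verification of the one-step bound, which as written would not go through. Setting $\kappa=\phi(m)$ and $r_l=1$ for $l\le 0$, the exact one-step probability is
$$
P_m\big(\phi(Z(1))\le k-j\big)=\Big(\prod_{d\ge 0} r_{j-d}^{\,m_{\kappa-d}}\Big)^{N},
$$
because each of the $N$ \emph{new} particles must satisfy $\xi\le k-j+d$ with respect to each of the $m_{\kappa-d}$ old particles sitting $d$ levels below the current maximum. Two of your claims about this product are off. First, for levels strictly below the maximum ($d\ge1$) the factor is $r_{j-d}^{m_{\kappa-d}}$ with $r_{j-d}\ge r_j$ (indeed $=1$ once $d\ge j$), so these factors are bounded \emph{below}, not above, by $r_j$ raised to the occupancy; e.g.\ if $m_\kappa=1$ and the other $N-1$ particles lie far behind, the per-particle probability of a drop by $i$ is close to $r_i$, not $r_i^N$. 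Second, the exponent $N$ in $(r_i/r_1)^N$ does not come from $\sum_l m_l=N$ at all: it comes from the $N$ freshly generated particles (the outer power above), combined with the single fact $m_\kappa\ge 1$. The correct comparison is simply
$$
\frac{P_m\big(\phi(Z(1))\le k-i\big)}{P_m\big(\phi(Z(1))\le k-1\big)}
=\Big(\frac{r_i^{\,m_\kappa}\prod_{d\ge1}r_{i-d}^{\,m_{\kappa-d}}}{r_1^{\,m_\kappa}}\Big)^{N}
\le \Big(\frac{r_i}{r_1}\Big)^{m_\kappa N}\le\Big(\frac{r_i}{r_1}\Big)^{N},
$$
using $r_{i-d}\le 1$ for the lower levels and $r_i\le r_1$, $m_\kappa\ge 1$ at the top level; this is exactly how the paper argues (it writes out the cases $j=1,2$, where $P_m(\phi(Z(1))\le k-1)=r_1^{m_\kappa N}$ and $P_m(\phi(Z(1))\le k-2)=r_1^{m_{\kappa-1}N}r_2^{m_\kappa N}$). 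With this repair of the central estimate, the rest of your proof is correct.
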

\begin{proof}  Let us fix $m\in\Omega_k$. 
Define $\kappa:=\sup\{i\in\mathbb N_k:m_i>0\}$.
Let us first note that

$$
P_{m}(\phi(Z(1))\le k-1)=
r_1^{m_\kappa N},
$$
while

$$
P_{m}(\phi(Z(1))\le k-2)=
r_1^{m_{\kappa-1}N}r_2^{m_\kappa N}\le\left(\frac{r_2}{r_1}\right)^{m_\kappa N}
P_{m}(\phi(Z(1))\le k-1).
$$
Hence

\begin{eqnarray*}
P_m(\phi(Z(t))\le k-2)&=&
\sum_{m'\in\Omega_k}P_m(Z(t-1)=m',\phi(Z(t))\le k-2)\\
&=&
\sum_{m'\in\Omega_k}P_m(Z(t-1)=m')P_{m'}(\phi(Z(1))\le k-2)\\
&\le&
\sum_{m'\in\Omega_k}P_m(Z(t-1)=m')P_{m'}(\phi(Z(1))\le k-1)\left(\frac{r_2}{r_1}\right)^{m'_{\kappa} N}\\
&
\le&
\left(\frac{r_2}{r_1}\right)^{ N}
P_m(\phi(Z(t))\le k-1),
\end{eqnarray*}

where in the last inequality we used the fact that by definition 
$m'_{\kappa'}\ge 1$. A similar reasoning shows that in general, 
for $i\ge 2$,

$$
P_m(\phi(Z(t))\le k-i)\le
\left(\frac{r_i}{r_1}\right)^{ N}P_m(\phi(Z(t))\le k-1).
$$
Taking the limit when $t\to\infty$ and using Proposition \ref{one},
we conclude the proof.
\end{proof}

\begin{lem} 
\label{on}
Let $\xi$ be distributed according to
(\ref{xi}) and suppose that assumption (R) is satisfied. Then

$$
\sum_{i=2}^\infty \left(\frac{r_i}{r_1}\right)^N={\mathcal O} \left(  \left(\frac{r_2}{r_1}\right)^N\right)
$$
\end{lem}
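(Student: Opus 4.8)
The quantities $r_i = \sum_{j=-\infty}^{k-i} p_j$ form a non-decreasing sequence in $i$ (since we are summing fewer terms, wait — actually $k-i$ decreases as $i$ increases, so $r_i$ is non-increasing). Let me restate: $r_1 = \P(\xi \le k-1) = 1-p_k = q_k \in (0,1)$, and more generally $r_i = \P(\xi \le k-i)$, so $r_1 \ge r_2 \ge r_3 \ge \cdots \ge 0$ and $r_i \to 0$ as $i \to \infty$. All the ratios $r_i/r_1 \in [0,1]$.

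**Plan.** The claim is that the whole tail sum $\sum_{i \ge 2}(r_i/r_1)^N$ is dominated, up to a constant independent of $N$, by its first term $(r_2/r_1)^N$. First I would separate the sum into $i=2$ and $i \ge 3$. For the tail $i \ge 3$, the key point provided by Assumption (R) is that $p_{k-1} > 0$, hence $r_1 - r_2 = p_{k-1} > 0$, so $r_2 < r_1$ strictly, and moreover $r_2/r_1 < 1$ is a fixed constant. But that alone is not quite enough; I also need that $r_3/r_2$ stays bounded away from $1$ in an $N$-free way, which again follows because $r_2/r_1 \le \theta := r_2/r_1 < 1$ and, crucially, $r_i \le r_2$ for all $i \ge 2$, so $(r_i/r_1)^N \le (r_2/r_1)^N$ for every $i \ge 2$. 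That gives $\sum_{i \ge 3}(r_i/r_1)^N \le \sum_{i\ge3} (r_i/r_2)^N (r_2/r_1)^N$, and it remains to bound $\sum_{i \ge 3}(r_i/r_2)^N$ by a constant.

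**The genuine estimate.** To bound $\sum_{i \ge 3}(r_i/r_2)^N$ uniformly in $N$, observe that for $N \ge 1$ we have $(r_i/r_2)^N \le r_i/r_2$, so the sum is at most $r_2^{-1}\sum_{i \ge 3} r_i = r_2^{-1}\sum_{i\ge 3}\P(\xi \le k-i)$. Now $\sum_{i \ge 1}\P(\xi \le k-i) = \sum_{i \ge 1}\P(k-\xi \ge i) = \E[(k-\xi)^+] \le \E|\xi| + |k| < \infty$ by the integrability part of Assumption (R). Hence $\sum_{i\ge 3}(r_i/r_2)^N \le r_2^{-1}\E[(k-\xi)^+] =: C < \infty$, a constant not depending on $N$. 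Combining, $\sum_{i \ge 2}(r_i/r_1)^N \le (r_2/r_1)^N + C\,(r_2/r_1)^N = (1+C)(r_2/r_1)^N$, which is exactly the claimed $\mathcal{O}((r_2/r_1)^N)$.

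**Main obstacle.** There is essentially no deep obstacle; the only thing to be careful about is making sure both hypotheses in Assumption (R) are used correctly: $p_{k-1} > 0$ guarantees $r_2/r_1 < 1$ strictly (so that the bound is meaningful, i.e.\ the right-hand side decays), while $\E|\xi| < \infty$ is what makes the residual series $\sum_{i\ge3} r_i$ converge and hence produces the $N$-uniform constant. One should also note the degenerate edge case: if $\xi$ is supported on just $\{k-1,k\}$ (the Bernoulli-type case), then $r_i = 0$ for $i \ge 2$... wait, $r_2 = \P(\xi \le k-2) = 0$, and then the statement is about $\sum_{i\ge2} 0 = 0 = \mathcal{O}(0)$, trivially true (or one interprets $r_2/r_1 = 0$); the argument above goes through with the convention $0^N = 0$. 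I would state the proof cleanly as: bound each term for $i\ge2$ by $(r_i/r_2)^N(r_2/r_1)^N \le (r_i/r_2)(r_2/r_1)^N$ for $i \ge 3$ and handle $i=2$ separately, then sum using $\sum_{i}r_i = \E[(k-\xi)^+]<\infty$.

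\begin{proof}[Proof of Lemma~\ref{on}]
Recall $r_i=\P(\xi\le k-i)$, so that $r_1\ge r_2\ge\cdots\ge 0$, all ratios $r_i/r_1$ lie in $[0,1]$, and by the first part of assumption (R), $r_1-r_2=p_{k-1}>0$, hence $\theta:=r_2/r_1<1$. Separating the term $i=2$,
$$
\sum_{i=2}^\infty\Big(\frac{r_i}{r_1}\Big)^N
=\theta^N+\sum_{i=3}^\infty\Big(\frac{r_i}{r_1}\Big)^N
=\theta^N+\theta^N\sum_{i=3}^\infty\Big(\frac{r_i}{r_2}\Big)^N.
$$
Since $r_i\le r_2$ for $i\ge 2$, each factor $(r_i/r_2)\in[0,1]$, so for every $N\ge 1$ we have $(r_i/r_2)^N\le r_i/r_2$, and therefore
$$
\sum_{i=3}^\infty\Big(\frac{r_i}{r_2}\Big)^N\le\frac1{r_2}\sum_{i=3}^\infty r_i
\le\frac1{r_2}\sum_{i=1}^\infty\P(\xi\le k-i)
=\frac1{r_2}\sum_{i=1}^\infty\P\big((k-\xi)^+\ge i\big)
=\frac{\E\big[(k-\xi)^+\big]}{r_2}=:C,
$$
which is finite by the second part of assumption (R) since $(k-\xi)^+\le|k|+|\xi|$, and $C$ does not depend on $N$. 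Consequently
$$
\sum_{i=2}^\infty\Big(\frac{r_i}{r_1}\Big)^N\le(1+C)\,\Big(\frac{r_2}{r_1}\Big)^N,
$$
which is the claim.
\end{proof}
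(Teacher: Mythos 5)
Your proof is correct and follows essentially the same route as the paper: bound $r_i\le r_2$ for $i\ge 2$ to pull out the geometric factor, and use integrability of $\xi$ (i.e.\ $\sum_i r_i=\E[(k-\xi)^+]<\infty$, the paper's ``summation by parts'') to obtain an $N$-independent constant. The only cosmetic difference is that you extract $(r_2/r_1)^N$ and keep $r_i/r_2$ linear, while the paper extracts $(r_2/r_1)^{N-1}$ and keeps $r_i/r_1$ linear; both implicitly treat the degenerate case $r_2=0$ as trivial, as you note.
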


\begin{proof}
Note that by summation by parts, assumption (R)
implies that

$$
\sum_{i=2}^\infty r_i<\infty.
$$
Therefore, 

$$
\sum_{i=2}^\infty \left(\frac{r_i}{r_1}\right)^N
\le \frac{1}{r_1}\left(\frac{r_2}{r_1}\right)^{N-1}\sum_{i=2}^\infty r_i
={\mathcal O} \left(  \left(\frac{r_2}{r_1}\right)^N\right).
$$
\end{proof}

\noindent Theorem \ref{th:multinomial} now follows from Lemmas \ref{veloc},
\ref{l2}, \ref{on} and the next proposition, whose proof we defer to
subsection \ref{proplim}. 

\begin{prop}
\label{propo} We have that

$$
\lim_{N\to\infty}\frac{\bar\nu_N(\phi\le k-1)}{q_k^{N^2}2^N}=1.
$$

\end{prop}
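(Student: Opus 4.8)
The plan is to reduce Proposition \ref{propo} to the Bernoulli estimate already available, by showing that the asymptotics of $\bar\nu_N(\phi \le k-1)$ is governed only by the top two atoms $p_k$ and $p_{k-1}$ of the distribution. First I would express $\bar\nu_N(\phi \le k-1)$ through the return-time structure of the chain $Z$, exactly as in the Bernoulli case: by Kac's lemma, $\bar\nu_N(\phi \le k-1) = (E_{m^*} T)^{-1}$ for an appropriate reference state $m^* = N e_k$ (all particles at the top) and $T$ the return time to the set $\{\phi = k\}$, so it suffices to prove $E_N T \sim q_k^{-N^2} 2^{-N}$. Following the Bernoulli proof almost verbatim, one writes $E_N T$ in terms of $P_N(\phi(Z(1))\le k-1 \text{ and stays } \le k-1 \text{ until return})$, and the leading contribution comes from excursions of length $1$ and $2$ below the top level.

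The core computation is the analogue of (\ref{eq:obv2})--(\ref{eq:obv3}). From the transition probabilities (\ref{eq:defQ}), starting from $m = N e_k$, the probability that no particle reaches level $k$ in one step is $r_1^{N\cdot N} = q_k^{N^2}$ (since $r_1 = 1-p_k = q_k$), giving the $\ell=1$ term $q_k^{N^2}$, negligible against $q_k^{N^2}2^N$. For the two-step contribution, one conditions on the intermediate configuration having its top level at $k-1$ with $k_1$ particles there: the one-step weight to reach such a configuration is $\binom{N}{k_1}(1-r_1^{\,k_1})\cdots$-type, and crucially the dominant factor is $q_k^{N^2}\binom{N}{k_1}$ summed over $1 \le k_1 \le N-1$, which produces $q_k^{N^2}(2^N - 2)\sim q_k^{N^2}2^N$, exactly as in (\ref{eq:obv3}). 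Here one uses that the second atom $p_{k-1}>0$ (part of assumption (R)) guarantees the intermediate state can actually sit at level $k-1$ with positive probability, and that the corrections from lower atoms only affect subleading terms.

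The remaining work is to bound the tail $\sum_{\ell \ge 2} P_N(T = \ell+1)$ by $o(2^N)q_k^{N^2}$, which is the analogue of the matrix-iteration argument (\ref{eq:prodmatr})--(\ref{eq:serie}) in Lemma \ref{lem:tps}. I would set up the same kind of recursion on quantities $a_\ell$ (total weight of length-$\ell$ excursions below the top) and $b_\ell$ (the part where the intermediate top-level occupation is close to $N$), split each sum according to whether the number of particles at the current top level is $\le (1-\eps)N$ or $>(1-\eps)N$, and obtain a $2\times 2$ matrix $M$ with entries decaying in $N$ whose dominant eigenvalue $\lambda_+(N) < 1$. Then $\sum_{\ell\ge 2} a_\ell \le (a_2+b_2)/(1-\lambda_+) = o(2^N)$. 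The fact that $\xi$ may take infinitely many values is handled by the already-proven Lemmas \ref{l2} and \ref{on}: these show that excursions reaching strictly below level $k-1$ contribute a geometric series in $(r_i/r_1)^N$ which is $O((r_2/r_1)^N)$ and hence absorbed, so the combinatorics effectively collapses to a binary (top vs.\ not-top) bookkeeping just like Bernoulli.

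The main obstacle I anticipate is the bookkeeping in the tail estimate: in the Bernoulli case the state of the reduced chain is a single integer $Z(t) \in \{0,\dots,N\}$, whereas here it is a vector $m \in \Omega_k$, so one must be careful that the recursion closes on the right scalar quantities and that the multinomial factors $\binom{N}{k_i}$ (rather than a product over many classes) really control the sum — this is where Lemmas \ref{l2} and \ref{on} do the essential reduction, letting us pretend all the sub-top mass is concentrated at level $k-1$. A secondary technical point is verifying that $E_N(\text{excursion length} \mid \text{excursion below top})$ stays bounded (the analogue of (\ref{eq:tps2})), which again follows from the same estimates since the length-$1$ and length-$2$ excursions dominate. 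Once these are in place, $E_N T \sim q_k^{-N^2}2^{-N}$ and the proposition follows.
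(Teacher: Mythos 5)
Your overall target is the same as the paper's (reduce everything to the Bernoulli estimate (\ref{eq:tps1})), but your very first reduction contains the real gap. You write that ``by Kac's lemma'' $\bar\nu_N(\phi\le k-1)=(E_{m^*}T)^{-1}$ with $m^*=\oplus$ and $T$ a return/hitting time. Kac's lemma gives $\bar\nu_N(n)=1/E_n(T_n)$ for a \emph{single} state, or, for a set $A$, $\bar\nu_N(A)^{-1}=E_{\bar\nu_N(\cdot\,|A)}(T_A)$ with the chain started \emph{inside} $A$ under the conditioned stationary law. In the Bernoulli case this was harmless because $\{Z=0\}$ is a singleton and the chain has the same transition law from $0$ as from $N$; here $\{\phi\le k-1\}=\Omega_k^0$ is a large set whose states have different (shifted) transition kernels, and $\oplus\notin\Omega_k^0$. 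The identity you want, $\bar\nu_N(\Omega_k^0)\sim P_\oplus(T_A<T_\oplus)\sim 1/E_\oplus(T_A)$, is true but is precisely what has to be proved: the paper does it by writing $\bar\nu_N(Z_k=0)=\sum_{n\in\Omega_k^0}1/E_n(T_n)$ (Kac state by state), comparing each $E_n(T_n)$ with $1/P_\oplus(T_n<T_\oplus)$ via the excursion decomposition of Lemma \ref{ll1} and the moment bounds of Lemma \ref{bound-moment} (e.g.\ $|E_\oplus T_\oplus-1|\le e^{-CN}$), and then summing over $n$ via Lemma \ref{red-bern}, which controls multiple visits to $\Omega_k^0$ within one $\oplus$-excursion. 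None of this is ``free'', and your proposal simply assumes it.

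Two further points. First, your description of the two-step contribution is off: the intermediate configuration cannot have its top level at $k-1$ (such a state already lies in $\Omega_k^0$, so the excursion would have length one); it must be a state with $Z_k=k_1\in\{1,\dots,N-1\}$, i.e.\ the front has advanced by $k$ but with fewer than $N$ leaders, exactly as in (\ref{eq:obv3}). Second, Lemmas \ref{l2} and \ref{on} compare the \emph{stationary} probabilities $\bar\nu_N(\phi\le k-i)$ with $\bar\nu_N(\phi\le k-1)$; they say nothing about excursion weights, so they cannot be invoked to collapse the multinomial bookkeeping inside your tail estimate. The clean (and paper's) way to get the binary reduction is to note that, as long as $Z_k\ge1$, the transition of $Z_k$ from any $m$ is ${\mathcal B}(N,1-q_k^{m_k})$, depending only on $m_k$; hence until $T_A$ the marginal $Z_k$ is exactly the Bernoulli chain of Section \ref{sec:bernoulli} with $q=q_k$, so $P_\oplus(T_A<T_\oplus)$ equals the Bernoulli quantity and (\ref{eq:tps1}) applies verbatim, with no need to redo the matrix iteration (\ref{eq:prodmatr})--(\ref{eq:serie}) on the vector-valued chain. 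As written, your argument would need both the missing comparison lemmas and a corrected excursion analysis before it closes.
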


\subsection{Proof of Proposition \ref{propo}}
\label{proplim}
Let us introduce for each $m\in\Omega_k$
the stopping time

$$
T_m:=\inf\{t\ge 1: Z(t)=m\}.
$$
Define now $\Omega_k^0:=\{m\in\Omega_k: m_k=0\}$.
Furthermore, we denote in this section
$\oplus:=(\ldots,0,N)\in\Omega_k$.
We now note that by Kac's formula

\begin{equation}
\nonumber
\bar\nu_N(Z_k=0)=\sum_{n\in\Omega_k^0} \bar
\nu_N(Z=n)
=
\sum_{n\in\Omega_k^0}\frac{1}{E_{n}(T_n)}.
\end{equation}
Hence we have to show that

\begin{equation}
\label{ee1}
\lim_{N\to\infty}
\frac{\sum_{n\in\Omega_k^0}\frac{1}{E_{n}(T_n)}}
{q_k^{N^2}2^N}=1.
\end{equation}
We will prove (\ref{ee1}) through the following three lemmas.

\begin{lem}
\label{ll1} Assume that $\xi$ is distributed according to 
 (\ref{xi}).
Then, for every $n\in\Omega_k^0$  we have that

\begin{equation*}
E_{n}(T_n)
=
E_\oplus(T_\oplus,
T_\oplus<T_n)
\frac{1}{P_\oplus(T_n<
T_\oplus)}+
E_\oplus(T_n|
T_n<T_\oplus)+U_N(n),
\end{equation*}
where $1-e^{-CN} \leq \inf_{n_1,\ldots,n_{k-1},0}|U_N| \le\sup_{n_1,\ldots,n_{k-1},0}|U_N|\le 2+e^{-CN}$ for some constant $C>0$.

\end{lem}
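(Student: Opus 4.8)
\emph{Proof idea for Lemma \ref{ll1}.}
The plan is to run the standard excursion decomposition of the renewal time $T_n$ around the ``full'' state $\oplus$, which the $N$ particles reach extremely fast. By the strong Markov property at $T_\oplus$ (and since $T_n=T_\oplus$ is impossible because $n\neq\oplus$),
$$E_n(T_n)=E_n\big[T_n;T_n<T_\oplus\big]+E_n\big[T_\oplus;T_\oplus<T_n\big]+P_n(T_\oplus<T_n)\,E_\oplus(T_n),$$
while splitting the excursions of the chain started at $\oplus$ into a geometric number of returns to $\oplus$ followed by the excursion that reaches $n$ gives
$$E_\oplus(T_n)=\frac{E_\oplus(T_\oplus;\,T_\oplus<T_n)}{P_\oplus(T_n<T_\oplus)}+E_\oplus(T_n\mid T_n<T_\oplus).$$
Hence, with $U_N(n):=E_n(T_n)-E_\oplus(T_n)$, one has $U_N(n)=E_n[T_n;T_n<T_\oplus]+E_n[T_\oplus;T_\oplus<T_n]-P_n(T_n<T_\oplus)\,E_\oplus(T_n)$, and it remains to estimate the three terms.

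The crucial structural fact is that, from \emph{every} state, $\oplus$ is hit within two steps with probability $1-e^{-cN}$. Indeed, from any $m\in\Omega_k$ the number of particles at the top position at the next step is $\mathrm{Bin}\big(N,1-q_k^{m_{\phi(m)}}\big)$, and since $m_{\phi(m)}\geq1$ this stochastically dominates $\mathrm{Bin}(N,p_k)$, so it exceeds $\varepsilon N$ (any fixed $\varepsilon<p_k$) off an event of probability $\leq e^{-cN}$; and from a state with $\geq\varepsilon N$ particles at the top the next state equals $\oplus$ with probability $(1-q_k^{\varepsilon N})^N\geq1-e^{-c'N}$. Iterating, $T_\oplus$ and $T_\oplus\wedge T_n$ have geometric-type tails uniform in the starting state, so $\sup_m E_m(T_\oplus)\leq 2+e^{-cN}$ and $\sup_m E_m(T_\oplus^2)=O(1)$. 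This already yields $1-e^{-cN}\leq P_n(T_\oplus<T_n)\leq E_n[T_\oplus;T_\oplus<T_n]\leq 2+e^{-cN}$; and, once $P_n(T_n<T_\oplus)\leq e^{-cN}$ is established, Cauchy--Schwarz gives $E_n[T_n;T_n<T_\oplus]\leq\sqrt{E_n(T_\oplus^2)\,P_n(T_n<T_\oplus)}\leq e^{-c''N}$.

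The main difficulty is the last term $P_n(T_n<T_\oplus)\,E_\oplus(T_n)$, which must be shown exponentially small even though $E_\oplus(T_n)$ is typically of the order $q_k^{-N^2}$. I would first bound the one-step return: since $n_k=0$, reaching $n$ in one step forces all $N$ particles below the current top, and factoring the multinomial one-step law as $q_k^{N\,n_{\phi(n)}}$ times a probability $\leq1$ gives $P_n(Z(1)=n)\leq q_k^{N\,n_{\phi(n)}}\leq q_k^N$; the same factorization applied at a generic later step shows that any longer return to $n$ must re-enter $\Omega_k^0$ from a state carrying $\Omega(N)$ particles at the top (forced off an event of probability $e^{-cN}$), which costs $q_k^{\Omega(N^2)}$, so $P_n(T_n<T_\oplus)\leq e^{-cN}$. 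To handle $E_\oplus(T_n)$ I would use $E_\oplus(T_n)\leq E_\oplus(T_\oplus)/P_\oplus(T_n<T_\oplus)+E_\oplus(T_n\mid T_n<T_\oplus)$, with $E_\oplus(T_\oplus)=1/\bar\nu_N(\oplus)=1+o(1)$ (the chain stays at $\oplus$ with probability $1-O(Nq_k^N)$) and $E_\oplus(T_n\mid T_n<T_\oplus)$ subexponential, together with the stationary flow identity $\bar\nu_N(\oplus)\,P_\oplus(T_n<T_\oplus)=\bar\nu_N(n)\,P_n(T_\oplus<T_n)$ and $\bar\nu_N(n)=1/E_n(T_n)$; this turns the term into $(1+o(1))\,P_n(T_n<T_\oplus)/P_\oplus(T_n<T_\oplus)+o(1)$.

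It remains to see that this ratio is exponentially small. Here the point is an asymmetry: an excursion from $\oplus$ reaching $n$ may shed particles gradually over several steps, producing the same $\sim 2^N$ multiplicity of ``strategies'' already visible in the Bernoulli computation (\ref{eq:obv3}), whereas an excursion from $n$ back to $n$ must re-collapse into $\Omega_k^0$ in essentially one step; quantitatively one compares $P_n(T_n<T_\oplus)$, dominated by its one-step part $\leq q_k^{N\,n_{\phi(n)}}$, with $P_\oplus(T_n<T_\oplus)$, which gains an extra factor of order $2^N$ from the gradual-leak strategies. Collecting the three bounds then gives $1-e^{-CN}\leq|U_N(n)|\leq 2+e^{-CN}$. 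The step I expect to require the most care is precisely this last comparison --- showing that re-entering $\Omega_k^0$ in one step from $n$ is at least a factor $2^{N}$ less likely than reaching $n$ from $\oplus$, uniformly over the possibly very spread-out configurations $n\in\Omega_k^0$ --- since for such $n$ both $E_n(T_n)$ and $E_\oplus(T_n)$ are astronomically large and only their difference is $O(1)$.
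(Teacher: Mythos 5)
Your skeleton is the same as the paper's: the excursion decomposition of $E_n(T_n)$ at $T_\oplus$, the renewal identity (\ref{eq:xyzt}) for $E_\oplus(T_n)$ (proved exactly as (\ref{eq:xyz})), the uniform two-step bound $\sup_m P_m(T_\oplus>2)\le e^{-CN}$ giving $\sup_m E_m(T_\oplus)\le 2+e^{-CN}$ (this is Lemma \ref{bound-moment}), and the exponential smallness of $E_n(T_n;T_n<T_\oplus)$ (you get it by Cauchy--Schwarz, the paper by the self-bounding $(T)_{21}/(T)_{22}$ argument --- both fine). You also correctly identify that, after combining the two decompositions, $U_N(n)=E_n(T_n)-E_\oplus(T_n)=E_n(T_\oplus;T_\oplus<T_n)+E_n(T_n;T_n<T_\oplus)-P_n(T_n<T_\oplus)\,E_\oplus(T_n)$, and that the last product is the delicate term (the paper's write-up is in fact silent about it: in its final display the coefficient of $E_\oplus(T_n)$ is $P_n(T_\oplus<T_n)$, not $1$, and (\ref{eq:xyzt}) is substituted without accounting for the deficit).

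The gap is in your proposed treatment of that term: you aim to show $P_n(T_n<T_\oplus)E_\oplus(T_n)$, equivalently (via the flow identity and $E_\oplus(T_\oplus)\approx 1$) the ratio $P_n(T_n<T_\oplus)/P_\oplus(T_n<T_\oplus)$, is exponentially small, invoking a ``$2^N$ asymmetry'' between excursions from $\oplus$ and one-step re-collapses from $n$. This target is simply not attainable: already in the Bernoulli case ($k=1$), $\Omega_k^0$ consists of the single state $n^*$ with all $N$ particles at the old front, and by (\ref{eq:defQ}) the transition kernel from $n^*$ coincides with the kernel from $\oplus$ (since $\theta n^*=\oplus$); hence $P_{n^*}(T_{n^*}<T_\oplus)=P_\oplus(T_{n^*}<T_\oplus)$ exactly, the ratio is $1$, and the product $P_{n^*}(T_{n^*}<T_\oplus)E_\oplus(T_{n^*})$ tends to $1$, not to $0$ --- there is no extra $2^N$ gained from $\oplus$, and the term is instead cancelled against the first term $E_{n^*}(T_\oplus;T_\oplus<T_{n^*})\approx 1$ (indeed $U_N(n^*)=0$ there). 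Worse, for laws with a small atom far below $k$ and configurations $n$ with many particles at that depth, one-step re-creation of $n$ from $n$ can be far cheaper than building $n$ from $\oplus$, so the ratio can even diverge like $e^{cN^2}$; so no argument can make this term uniformly small, and your plan of making each of the three pieces separately negligible cannot be completed. What one does get for free is the one-sided bound $U_N(n)\le E_n(T_\oplus;T_\oplus<T_n)+E_n(T_n;T_n<T_\oplus)\le 2+e^{-CN}$, because the problematic term is nonnegative; and an inspection of how Lemma \ref{ll1} is used in the proof of Proposition \ref{propo} (the lower bound on $P_\oplus(T_n<T_\oplus)$, which also follows directly from Kac-type identities, and the Cauchy--Schwarz step, which only needs an upper bound on $E_n(T_n)$) shows that this one-sided control is what actually matters. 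So you located the sore point correctly, but the fix you propose proves the wrong (and false) statement rather than the usable one.
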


\begin{lem}
\label{bound-moment}
Assume that $\xi$ is distributed according to 
 (\ref{xi}).  Then, there is a constant $C>0$ such that the
following are satisfied.

\begin{itemize}

\item[a)] For $p=1$ and $p=2$, and for every $N\ge 2$ we have that

\begin{equation}
\label{afirst}
\sup_{m\in\Omega_k}E_m(T^p_\oplus)\le
2^p(1+ e^{-CN}).
\end{equation}

\item[b)] For every $N\ge 2$ we have that

$$
\sup_{m\in\Omega_k^0}\left| E_\oplus(T_\oplus,
T_\oplus<T_m)-1\right|\le
e^{-CN}.
$$

\end{itemize}
\end{lem}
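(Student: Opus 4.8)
The statement to prove is Lemma \ref{bound-moment}, which gives uniform bounds on the first and second moments of $T_\oplus$ (part a) and an estimate showing $E_\oplus(T_\oplus, T_\oplus < T_m)$ is exponentially close to 1 (part b), for the Markov chain $Z$ on $\Omega_k$ in the countable-values case.

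\textbf{Proof plan for Lemma \ref{bound-moment}.}

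The plan is to exploit the fact that from \emph{any} configuration $m \in \Omega_k$, the chain has probability bounded below by a quantity close to $1$ of jumping directly to $\oplus$ in one step. Indeed, starting from $m$ with $\phi(m)=\kappa$, the dynamics applies i.i.d.\ increments $\xi_{0,0}$ to the $m_\kappa \geq 1$ leading particles (and to the others), and then re-centers; the event that the new leftmost gap is $0$ and all $N$ new particles share the new maximum has probability $p_k^N$ when $\kappa = k$ and more generally is bounded below by $(p_k)^N$ up to the recentering bookkeeping — the key point is that $P_m(Z(1) = \oplus) \geq \pi_N$ for a constant $\pi_N \to 1$... actually $\pi_N \to 0$, so this naive bound is useless. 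Let me reconsider: the correct observation is that $P_m(\phi(Z(1)) = k) = 1 - r_1^{m_\kappa N} \geq 1 - r_1^N$, i.e.\ with probability at least $1 - r_1^N$ the front advances by the full amount $k$ and, conditionally on this, the configuration is distributed as the multinomial ${\mathcal M}(N;s)$ which, with overwhelming probability (probability $\geq 1 - e^{-CN}$ by a standard large-deviation/union bound using $p_k, p_{k-1}>0$ from assumption (R)), lands in a "good" set from which reaching $\oplus$ is fast. So first I would establish a one-step (or two-step) lower bound: there is $c_N = 1 - e^{-CN}$ such that $\inf_{m \in \Omega_k} P_m(T_\oplus \leq 2) \geq c_N$. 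This is the heart of part a).

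Given such a uniform lower bound on the hitting probability of $\oplus$ within two steps, $T_\oplus$ is stochastically dominated (uniformly in the starting point) by $2G$ where $G$ is geometric with success probability $c_N$; hence $\sup_m E_m(T_\oplus) \leq 2/c_N = 2(1 + e^{-CN}(1+o(1)))$ and $\sup_m E_m(T_\oplus^2) \leq 4 E[G^2] = 4(2-c_N)/c_N^2 = 4(1 + e^{-CN}(1+o(1)))$, which after absorbing constants gives the claimed $2^p(1 + e^{-CN})$ for $p=1,2$. The remark at the end of the proof of Lemma \ref{lem:tps} indicates this same geometric-domination argument is what is meant by "repeat the proof of part a) with $p=1$". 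For part b), I would write $E_\oplus(T_\oplus, T_\oplus < T_m) = \sum_{\ell \geq 1} \ell\, P_\oplus(T_\oplus = \ell < T_m)$. The $\ell = 1$ term is $P_\oplus(Z(1) = \oplus, \oplus \neq m)$; since $m \in \Omega_k^0$ has $m_k = 0 \neq N$, we have $\oplus \neq m$ automatically, so this term equals $P_\oplus(Z(1) = \oplus)$. Now $P_\oplus(Z(1) = \oplus) = P(\text{all } N \text{ increments equal } k) \cdot (\text{correction})$; more precisely, starting from $\oplus$, the multinomial ${\mathcal M}(N;s(\oplus))$ puts mass $s_k(\oplus)^N = (1-r_1^N)^N = 1 - O(Nr_1^N)$... wait, that is not exponentially close to $1$ unless $r_1$ is bounded away from... $r_1 = 1 - p_k < 1$, so $r_1^N$ is exponentially small and $(1-r_1^N)^N = 1 - e^{-CN}$. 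Hence the $\ell=1$ term is $1 - e^{-CN}$, and the tail $\sum_{\ell \geq 2} \ell P_\oplus(T_\oplus = \ell < T_m) \leq \sum_{\ell \geq 2}\ell P_\oplus(T_\oplus = \ell) = E_\oplus(T_\oplus) - P_\oplus(T_\oplus=1) \leq e^{-CN}$ by the moment bound from part a) together with $P_\oplus(T_\oplus = 1) \geq 1 - e^{-CN}$. Combining, $|E_\oplus(T_\oplus, T_\oplus < T_m) - 1| \leq e^{-CN}$.

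\textbf{Main obstacle.} The delicate step is the uniform-in-$m$ two-step lower bound $\inf_m P_m(T_\oplus \leq 2) \geq 1 - e^{-CN}$ needed for part a). After one step from an arbitrary $m$, the chain reaches a multinomial configuration that need not be $\oplus$, and one must show that from \emph{that} random configuration one reaches $\oplus$ in one more step with high probability. This requires controlling, uniformly over the possible intermediate configurations, that the number of "leading" particles $m_\kappa$ after one step is large (so that $r_1^{m_\kappa N}$ and similar quantities are doubly-exponentially small), which is where assumption (R) ($p_{k-1}>0$ in addition to $p_k>0$) and a multinomial large-deviation estimate enter. One must be careful that the bad event — too few particles at the top after one step — genuinely has probability $e^{-CN}$ and not merely $o(1)$; this uses that, conditioned on the front advancing by $k$, each of the $N$ particles independently has a probability bounded below (by a constant depending on $p_k, p_{k-1}$) of landing at the new top, so the count is binomial-like with linear mean and the lower tail is exponentially small. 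I would isolate this as a preliminary sub-lemma and then feed it into the geometric-domination argument above.
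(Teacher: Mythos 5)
Your overall strategy coincides with the paper's: the heart in both cases is the uniform two-step estimate $\sup_{m\in\Omega_k}P_m(T_\oplus>2)\le e^{-CN}$, obtained by showing that after one step there are at least order $N$ leaders with probability $1-e^{-CN}$, and that from such a configuration the chain jumps to $\oplus$ in one further step with probability $1-e^{-CN}$. The paper then deduces the moment bounds from a bootstrap inequality, $E_m(T_\oplus)\le 2P_m(T_\oplus\le2)+\bigl(2+\sup_n E_n(T_\oplus)\bigr)P_m(T_\oplus>2)$, using that the supremum is a priori finite (at most $\delta_N^{-1}$), while you use stochastic domination of $T_\oplus$ by twice a geometric variable; these are equivalent bookkeeping devices and your version is fine. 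One caution on your ``main obstacle'' paragraph: conditionally on the front advancing by $k$, the particle positions are \emph{not} independent, so your sub-lemma should be formulated unconditionally, as in the paper: the number of particles at the new top stochastically dominates a Binomial$(N,p_k)$ variable, so a binomial lower-tail estimate gives the bound directly, and $p_{k-1}>0$ plays no role at this point.

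The genuine gap is in part b). You bound the tail by $\sum_{\ell\ge2}\ell\,P_\oplus(T_\oplus=\ell)=E_\oplus(T_\oplus)-P_\oplus(T_\oplus=1)$ and assert this is $\le e^{-CN}$ ``by the moment bound from part a) together with $P_\oplus(T_\oplus=1)\ge1-e^{-CN}$''. That deduction fails: part a) only gives $E_\oplus(T_\oplus)\le 2(1+e^{-CN})$, so subtracting $P_\oplus(T_\oplus=1)\le 1$ bounds the tail by a quantity of order $1$, not $e^{-CN}$. What is needed is the sharper statement $E_\oplus\bigl(T_\oplus\mathbf{1}_{T_\oplus\ge2}\bigr)\le e^{-CN}$, and this requires one more argument: either the Markov property at time $1$, giving $E_\oplus\bigl(T_\oplus\mathbf{1}_{T_\oplus\ge2}\bigr)\le P_\oplus(T_\oplus\ge2)\bigl(1+\sup_m E_m(T_\oplus)\bigr)\le C'e^{-CN}$ (this is exactly the paper's route, which writes $E_\oplus(T_\oplus)=\bigl(1-(1-p_k)^N\bigr)^N+E_\oplus(T_\oplus,T_\oplus>1)$ and bounds the second term this way), or Cauchy--Schwarz with your $p=2$ bound, $E_\oplus\bigl(T_\oplus\mathbf{1}_{T_\oplus\ge2}\bigr)\le E_\oplus(T_\oplus^2)^{1/2}P_\oplus(T_\oplus\ge2)^{1/2}\le C'e^{-CN/2}$. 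With that line inserted, your argument closes: the $\ell=1$ term equals $P_\oplus(Z(1)=\oplus)=\bigl(1-(1-p_k)^N\bigr)^N\ge1-e^{-CN}$ (and $\{T_\oplus=1\}\subset\{T_\oplus<T_m\}$ since $m\in\Omega_k^0$ forces $m\ne\oplus$), so the two-sided bound follows uniformly in $m\in\Omega_k^0$, in agreement with the paper's sandwich $E_\oplus(T_\oplus,T_\oplus<T_A)\le E_\oplus(T_\oplus,T_\oplus<T_m)\le E_\oplus(T_\oplus)$.
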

To state the third lemma, we  need to define the first 
hitting time of the set $\Omega_k^0$. We let

$$
T_A:=\inf_{m\in\Omega_k^0}T_m.
$$
\begin{lem}
\label{red-bern}
Assume that $\xi$ is distributed according to 
 (\ref{xi}).  Then, there is a constant $C>0$ such that

$$
\sum_{n\in\Omega_k^0}P_\oplus(T_n<T_\oplus)
=P_\oplus(T_A<T_\oplus)\big(1+ {\mathcal O}(e^{-CN})\big).
$$

\end{lem}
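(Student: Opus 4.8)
The plan is to compare the sum $\sum_{n \in \Omega_k^0} P_\oplus(T_n < T_\oplus)$ with the single quantity $P_\oplus(T_A < T_\oplus)$, where $T_A = \inf_{m \in \Omega_k^0} T_m$ is the first time the chain $Z$ visits the set $\Omega_k^0$ of configurations with no leaders at the top site. The inequality $\sum_{n \in \Omega_k^0} P_\oplus(T_n < T_\oplus) \ge P_\oplus(T_A < T_\oplus)$ is immediate, since on the event $\{T_A < T_\oplus\}$ the chain occupies \emph{some} state $n \in \Omega_k^0$ at time $T_A$ before returning to $\oplus$, and these events (indexed by the value of $Z(T_A)$) are disjoint; summing their probabilities over $n$ gives exactly $P_\oplus(T_A < T_\oplus)$ as a lower bound for the left-hand side. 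So the content is the reverse inequality: we must show that the chance of visiting $\Omega_k^0$ \emph{more than once} before returning to $\oplus$ — or, more precisely, the overcounting in $\sum_n P_\oplus(T_n<T_\oplus)$ coming from excursions that touch several distinct states of $\Omega_k^0$ — contributes only a multiplicative $1 + {\mathcal O}(e^{-CN})$ correction.

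First I would set up the decomposition. Write $\sum_{n \in \Omega_k^0} P_\oplus(T_n < T_\oplus) = E_\oplus\big[ \#\{n \in \Omega_k^0 : T_n < T_\oplus\}\big]$, the expected number of \emph{distinct} states of $\Omega_k^0$ visited in an excursion from $\oplus$. This equals $P_\oplus(T_A < T_\oplus) + E_\oplus\big[(\#\{n\in\Omega_k^0: T_n < T_\oplus\} - 1)^+\big]$, so it suffices to bound the second term by $e^{-CN} P_\oplus(T_A < T_\oplus)$. By the strong Markov property applied at time $T_A$, this reduces to showing $\sup_{n \in \Omega_k^0} E_n\big[\#\{n' \in \Omega_k^0 : T_{n'} < T_\oplus\}\big] = {\mathcal O}(e^{-CN})$, i.e.\ that from any state of $\Omega_k^0$ the chain reaches $\oplus$ before re-entering $\Omega_k^0$ with overwhelming probability, and even if it does re-enter, the number of further distinct $\Omega_k^0$-states it picks up has exponentially small expectation. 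The mechanism is the same as in the Bernoulli case: from a configuration in $\Omega_k^0$ the ``typical'' dynamics (governed by the shifted transition kernel ${\mathcal M}(N; s^{(j)})$ with $j \ge 1$, since $m_k = 0$) pushes the front forward and fills the top site with order $N$ particles, reaching a state with many leaders, from which the return to $\oplus$ is fast and re-entry into $\Omega_k^0$ requires a rare event of probability $\lesssim q_k^{cN^2}$. I would make this quantitative by coupling with the Bernoulli chain of Section \ref{sec:bernoulli} (or directly estimating the one-step probability $P_m(\phi(Z(1)) \le k-1) = r_1^{m_\kappa N}$ from Lemma \ref{l2}, which is exponentially small once $m_\kappa \ge 1$, together with the moment bound on $T_\oplus$ from Lemma \ref{bound-moment}a)).

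The main obstacle I anticipate is controlling the sum over the \emph{unboundedly many} states $n' \in \Omega_k^0$ — the state space $\Omega_k$ is infinite (it allows configurations supported arbitrarily far below the leader) and grows with $N$, so ``exponentially small expected number of re-visits'' needs a uniform-in-$n$ estimate that does not degrade when summed. I would handle this by first showing that, with probability $1 - {\mathcal O}(e^{-CN})$, any excursion from $\oplus$ only ever visits states $m$ with $\phi(m) \ge k-1$ and with bounded ``spread'' (the lowest occupied site stays within ${\mathcal O}(1)$ of the leader), using the integrability assumption in (R) and a union bound whose terms decay geometrically in the spread; on this high-probability event the relevant part of $\Omega_k^0$ is effectively a finite set of size ${\mathcal O}(2^N)$ and the per-state re-visit probabilities are $\lesssim q_k^{cN^2}$, so the product is still ${\mathcal O}(e^{-CN})$. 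Combining the two inequalities then yields $\sum_{n \in \Omega_k^0} P_\oplus(T_n < T_\oplus) = P_\oplus(T_A < T_\oplus)(1 + {\mathcal O}(e^{-CN}))$, which is the claim.
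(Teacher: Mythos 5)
Your first two paragraphs follow the same architecture as the paper's proof: the lower bound is the trivial one, and via the strong Markov property at $T_A$ the upper bound is reduced to showing $\sup_{n\in\Omega_k^0}E_n\big[\#\{n'\in\Omega_k^0: T_{n'}<T_\oplus\}\big]={\mathcal O}(e^{-CN})$ (the paper works with the closely related quantity ${\mathcal N}_A=\sum_{t=1}^{T_\oplus}{\bf 1}_{Z(t)\in\Omega_k^0}$, which dominates the number of distinct states visited, and closes the estimate with a self-bounding use of the strong Markov property plus the bounds $P_n(T_A=1)\le(1-p_k)^N$ and (\ref{upb})). Up to that point your plan is sound, and the ingredients you name at the end of the second paragraph (the one-step probability $r_1^{m_\kappa N}\le q_k^N$ of falling into $\Omega_k^0$, valid uniformly since every state has at least one particle at its leading site, together with the moment bound of Lemma \ref{bound-moment}(a)) are exactly what is needed: conditioning on ${\mathcal F}_{t-1}$ on the event $\{t\le T_\oplus\}$ gives $E_n{\mathcal N}_A\le q_k^N\,E_nT_\oplus\le 2(1+e^{-CN})q_k^N$, with no reference to which states are visited.

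The genuine gap is in your third paragraph, where you abandon this and instead propose to sum per-state bounds over an ``effectively finite'' portion of $\Omega_k^0$. First, the claimed per-state estimate $P_n(T_{n'}<T_\oplus)\le q_k^{cN^2}$ is false in general: from a state $n\in\Omega_k^0$ whose leading pile contains a single particle, a one-step transition into a specific $n'\in\Omega_k^0$ already has probability of order $r_1^{N}=q_k^N$ times multinomial factors, far larger than $q_k^{cN^2}$; the $N^2$ exponent only appears from states with order $N$ leaders. Second, even with the correct uniform per-state bound ${\mathcal O}(q_k^N)$, multiplying by ${\mathcal O}(2^N)$ states gives $(2q_k)^N$, which is not small when $q_k\ge 1/2$, so the route does not close in general. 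Third, the spread-control step is not available under assumption (R): with only integrability of $\xi$, the probability that one of the $N^2$ jump variables in a given step falls more than $w$ below the leader need only be polynomially small in $w$, not exponentially small in $N$ for bounded $w$. None of this is needed: the expected number of distinct $\Omega_k^0$-states visited before $T_\oplus$ is bounded by the expected number of time steps spent in $\Omega_k^0$ before $T_\oplus$, which is controlled uniformly as above (or by the paper's recursion), so the ``infinite state space'' obstacle you anticipate never arises.
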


\noindent Let us now see how Lemmas \ref{ll1}, \ref{bound-moment} and
\ref{red-bern}
imply  Proposition \ref{propo}. We will see
that in fact, Proposition \ref{propo} will follow
as a corollary  of the corresponding result for
 the Bernoulli case with $q=q_k$. Note that
Lemma \ref{ll1} and part $(b)$ of
Lemma \ref{bound-moment} imply that

$$
P_\oplus(T_n <
T_\oplus)
\ge
\frac{1-e^{-CN}}{E_n(T_n)
}, \qquad n \in \Omega_k^0.
$$
Hence, summing up over $n\in\Omega_k^0$, by Lemma \ref{red-bern}, we get that, for some $C'>0$,

\begin{equation}
\label{pup}
P_\oplus(T_A<T_\oplus)\ge
(1-e^{-C'N})
\sum_{n\in\Omega_k^0}
\frac{1}{E_n(T_n)}.
\end{equation}
Now, note that $P_\oplus(T_A<T_\oplus)$
is equal to the probability to hit $0$ before $N$,
starting from $N$, for the chain $Z$ defined through
random variables with Bernoulli increments as in Section 5.
Hence, by (\ref{eq:tps1}) of Lemma \ref{lem:tps}
we conclude that for $N$ large enough

\begin{equation}
\label{pbound}
(1+e^{-CN})q_k^{N^2}2^N\ge 
\sum_{n\in\Omega_k^0}
\frac{1}{E_{n}(T_{n})}.
\end{equation}
On the other hand, applying the Cauchy-Schwarz inequality
to the expectation $E_\oplus(\cdot |T_n<T_\oplus)$ in
 Lemma \ref{ll1} and using
Lemma \ref{bound-moment}, we obtain 
for each $n\in\Omega_k^0$ that

\begin{equation*}
E\le \frac{a_1}{P}+\frac{a_2}{\sqrt{P}}+a_3,
\end{equation*}
where $a_1:=1+e^{-CN}$, $a_2:=2(1+e^{-CN})$ and
$a_3:=U_N$, $E:=E_n(T_n)$,
$P:=P_\oplus(T_n<
T_\oplus)$
and we have used (\ref{afirst}) of part $(a)$ of Lemma \ref{bound-moment}
with $p=2$. It follows that

$$
\frac{1}{\sqrt{P}}
\ge \frac{\sqrt{a_2^2-4a_1(a_3-E)}-a_2}{2a_1}.
$$
Hence,

$$
a_1\frac{1}{P}\ge E
-\frac{a_2}{2a_1}\sqrt{a_2^2-4a_1(a_3-E)}.
$$
Now, $a_2^2-4a_1(a_3-E)\le 8(1+E)$ for large $N$, so that

$$
a_1 \frac{1}{P}\ge E\left(1
-4\frac{1}{\sqrt{E}}\sqrt{8\left(\frac{1}{E}+1\right)}\right).
$$
Now, by inequality (\ref{pbound}) we
conclude that for $N$ large enough $\frac{1}{E}\le q_k^{N^2}2^{N+1}$.
Therefore,

$$
\frac{1}{E_n
(T_n)}\ge 
(1-e^{-C'N})P_\oplus
(T_n<T_\oplus
).
$$
Summing up over $n\in\Omega_k^0$, by Lemma \ref{red-bern}
we get that

\begin{equation}
\label{pdown}
P_\oplus(T_A<T_\oplus)\le
(1+e^{-C'N})
\sum_{n\in\Omega_k^0}
\frac{1}{E_n(T_n)}
\end{equation}
for some $C'>0$. 
Finally, (\ref{eq:tps1}) of Lemma \ref{lem:tps},
together with inequalities (\ref{pup}) and (\ref{pdown}),
imply inequality (\ref{ee1}), which finishes the proof of Proposition \ref{propo}. \qed

\subsubsection{ Proof of Lemma \ref{bound-moment}}

\noindent {\it Part (a)}. We will first
prove that there exists a constant $C>0$ such that

\begin{equation}
\label{upb}
\sup_{m\in\Omega_k}P_m(T_\oplus>2)\le e^{-CN}.
\end{equation}
The strategy to prove this bound
will be to show that with a high probability, after
one step there are at least $\frac{p_kN}{2}$ leaders. This gives
a high probability of then having $N$ leaders in the second step.
Consider now the set $L_{k,N}:=\left\{m\in\Omega_k: m_k \geq \left[\frac{p_kN}{2}
\right]\right\}$.
We have

\begin{equation}
\label{fe1}
P_m(T_\oplus\le 2)\ge
P\left(X\ge \frac{p_k N}{2}\right)
\inf_{m\in L_{k,N}}P_m(T_\oplus=1),
\end{equation}
where $X$ is a random variable with a binomial distribution
of parameters $p_k$ and $N$. Now, by a large deviation
estimate, the first factor of (\ref{fe1}) is bounded from
below by $1-e^{-CN}$. On the other hand, we have
for $m\in L_{k,N}$,

$$
P_m(T_\oplus=1)\ge
\left(1-(1-p_k)^{Np_k/2}\right)^N\ge 1-e^{-CN},
$$
for some constant $C>0$. This estimate combined with
(\ref{fe1}) proves inequality (\ref{upb}).
Now, by the Markov property, we get that, for all $m\in\Omega_k$,

\begin{eqnarray*}
\nonumber
E_m(T_\oplus)
&=& E_m(T_\oplus  1_{T_\oplus \leq 2})
+ \sum_{n \in \Omega_k} E_m(T_\oplus  1_{T_\oplus > 2, Z(2)=n})\\
&\le& 2 P_m({T_\oplus \leq 2}) 
+ \sum_{n \in \Omega_k} E_m\big(1_{T_\oplus > 2, Z(2)=n}
[2+E_n(T_\oplus)]
\big)\\
&\le& 2  P_m(T_\oplus\le 2)
+\left(2+\sup_{n\in\Omega_k}
E_{n}
(T_{\oplus})\right) P_{m}(T_{\oplus}> 2),
\end{eqnarray*}
where the supremum is finite, in fact smaller than $\delta_N^{-1}$ with $\delta_N$ from
(\ref{deltaN}).
Bounding the first term of the right-hand side
of the above inequality by $2$, taking the supremum over $m \in \Omega_k$  and
applying the bound (\ref{upb}), we
obtain (\ref{afirst}) of $(a)$ of Lemma \ref{bound-moment}
with $p=1$. The proof of (\ref{afirst}) when $p=2$ 
is analogous via an application of the case $p=1$.

\smallskip

\noindent {\it Part (b)}. Note that
for every state $m\in\Omega_k^0$ we have that

$$
E_\oplus(T_\oplus,
T_\oplus<T_A)\le
E_\oplus(T_\oplus,
T_\oplus<T_m)\le
E_\oplus(T_\oplus).
$$
Hence, it is enough to prove that

\begin{equation}
\label{etn}
\left|E_\oplus(T_\oplus)-1\right|\le e^{-CN},
\end{equation}
and that
\begin{equation}
\label{etn2}
\left|E_\oplus(T_\oplus,T_\oplus<T_A)-1\right|\le e^{-CN}.
\end{equation}
To prove (\ref{etn}) note that

\begin{equation}
\label{int1}
E_\oplus(T_\oplus)=
\left(1-(1-p_k)^N\right)^N+E_\oplus(T_\oplus,T_\oplus>1).
\end{equation}
But by the  Markov property,

$$
E_\oplus(T_\oplus,T_\oplus>1)
\le \left(1-\left(1-(1-p_k)^N\right)^N\right)
\left(1+\sup_{m\in\Omega_k^0}E_m(T_\oplus)\right).
$$
Note that

$$
\left(1-(1-p_k)^N\right)^N
\ge \exp\left\{-\frac{N(1-p_k)^N}{1-N(1-p_k)^N}\right\}
\ge 1-\frac{N(1-p_k)^N}{1-N(1-p_k)^N}.
$$
Using part $(a)$ just proven  of this Lemma, we conclude that

\begin{equation}
\label{int3}
E_\oplus(T_\oplus,T_\oplus>1)
\le e^{-CN}.
\end{equation}
Substituting this back into (\ref{int1}) we
obtain inequality (\ref{etn}).
To prove inequality (\ref{etn2}), as before, observe that

\begin{equation}
\label{int2}
E_\oplus(T_\oplus, T_\oplus<T_A)=
\left(1-(1-p_k)^N\right)^N+E_\oplus(T_\oplus,T_A>T_\oplus>1).
\end{equation}
Noting that $E_\oplus(T_\oplus,T_A>T_\oplus>1)
\le E_\oplus(T_\oplus,T_\oplus>1)$, we
can use the estimate (\ref{int3}) to obtain (\ref{etn2}).

\subsubsection{Proof of Lemma \ref{ll1}}
We will use the following relation, which proof is similar to that of (\ref{eq:xyz}) and will be not be repeated here:
for every $n\in\Omega_k^0$,
\begin{equation}
\label{eq:xyzt}
E_\oplus(T_n)
=
E_\oplus(T_\oplus|
T_\oplus<T_n)
\frac{P_\oplus(
T_\oplus<T_n)
}{P_\oplus(T_n<
T_\oplus)}+
E_\oplus(T_n|
T_n<T_\oplus).
\end{equation}

\noindent Let us now derive Lemma \ref{ll1}.
Let $n\in\Omega_k^0$ and $m \in \Omega_k$.
We first make the decomposition
\begin{equation}
\label{firstdecomp}
E_m(T_n)=(T)_1+(T)_2,
\end{equation}
where

\begin{eqnarray*}
&\ &(T)_1:=
E_m(T_n
1_{T_\oplus<T_n})\qquad \text{and}\\
&\ &(T)_2:=E_m(T_n 1_{T_\oplus>
T_n}).
\end{eqnarray*}
We also denote by $\overline{(T)_2}$ the supremum of $(T)_2$ over all possible $n\in\Omega_k^0$ and $m \in \Omega_k$.
 Now,

\begin{equation}
\label{t2}
(T)_2=(T)_{21}+(T)_{22},
\end{equation}
where

\begin{eqnarray*}
&\ &(T)_{21}:=
E_m(T_n
1_{T_\oplus>T_n}1_{Z_{k}(1)> CN})\qquad\text{and}\\
&\ &(T)_{22}:=E_m(T_n 1_{T_\oplus>
T_n}1_{Z_{k}(1)\le CN}).
\end{eqnarray*}
Now note that for any constant $C<p_k$, by the
Markov property and a standard large deviation estimate we have that
\begin{eqnarray} \nonumber
(T)_{22}
&=&P_m(T_n=1) + \sum_{z_1\le CN} E_m(T_n 
1_{T_\oplus>
T_n \geq 2} 1_{Z_{k}(1)=z_1})\\  \nonumber
&	\le& P_m( Z(1)= n) +
\left(1+\overline{(T)_2}\right)P_m(Z_{k}(1)\!\le \!CN, Z(1)\neq n)\\ \nonumber
&	\le& \left(1+\overline{(T)_2}\right)P_m(Z_{k}(1)\!\le \!CN)\\ \label{eq:bozo}
&\le & 
\left(1+\overline{(T)_2}\right) e^{-cN},
\end{eqnarray}
for some constant $c>0$ depending on $C, p_k$. On the other hand,
by definition of the event $\{ T_\oplus>T_n\}$, we have the first equality below:
\begin{eqnarray} \nonumber
(T)_{21} &=& 
E_m(T_n 1_{T_\oplus> T_n}1_{Z_{k}(1)> CN}1_{Z_{k}(2)\le N-1})\\  \nonumber
&\le& 
\left(1-(1-(1-p_k)^{CN})^N\right)(2+\overline{(T)_2})\\  \label{eq:boum}
&\le& C'N(1-p_k)^{CN}(2+\overline{(T)_2}),
\end{eqnarray}
for some $C'>0$. We can now conclude from (\ref{t2}), (\ref{eq:bozo}) and  (\ref{eq:boum}),
 that there is a constant
$C>0$ such that

$$
\overline{(T)_2}\le Ce^{-CN}.
$$
Let us now take $m=n \in \Omega_k^0$ and  examine the first term of the decomposition
(\ref{firstdecomp}). Note that by the strong Markov property,

\begin{equation}
\label{t1}
(T)_1=E_\oplus(T_n)
+E_n(T_\oplus 1_{T_\oplus
<T_n}).
\end{equation}
Now, by part $(a)$ Lemma \ref{bound-moment} with $p=1$,
we see that the second term in the
above decomposition is bounded above as follows,
\begin{equation}
\label{eq:rrr}
E_n(T_\oplus)\leq 2(1+e^{-CN}).
\end{equation}
Collecting our estimates, we get
\begin{eqnarray*}
E_n T_n &=& 
E_n( T_n; T_\oplus < T_n) + E_n( T_n; T_n < T_\oplus ) \\
&=&  
E_n( T_\oplus;  T_\oplus < T_n) + E_n( T_n-T_\oplus; T_\oplus < T_n) +
E_n( T_n; T_n < T_\oplus ) \\
&=&  
E_n( T_\oplus;  T_\oplus < T_n) + P_n( T_\oplus < T_n) \times E_\oplus( T_n) +
E_n( T_n; T_n < T_\oplus ).
\end{eqnarray*}
Here we bound the first term with (\ref{eq:rrr}), the last one by $\overline{(T)_2}$, and we can use 
(\ref{eq:xyzt}) to obtain the desired conclusion. \qed
\subsubsection{ Proof of Lemma \ref{red-bern}} 

First note that
$$
\sum_{n\in\Omega_k^0}P_\oplus(T_n<T_\oplus)
\geq P_\oplus(T_A<T_\oplus),
$$
and it suffices to prove an inequality in the converse direction.
It is natural to introduce the number ${\mathcal N}_A$ of visits 
of the chain to the set $ \Omega_k^0$ before reaching the $\oplus$ state,  
$${\mathcal N}_A:=\sum_{t= 1}^{T_\oplus}{\bf 1}_{Z(t) \in \Omega_k^0}\;,$$  since we have, for all $m \in \Omega_k$, the relations
\begin{equation} \label{eq:jour0}
E_m {\mathcal N}_A \geq \sum_{n\in\Omega_k^0}P_m(T_n<T_\oplus)\;,\quad
P_m({\mathcal N}_A \geq 1)=P_m(T_A<T_\oplus)\;.
\end{equation}

Then, by the strong Markov property,
\begin{eqnarray} 
E_\oplus( {\mathcal N}_A) 
&=&
E_\oplus( {\mathcal N}_A 1_{{\mathcal N}_A\geq 1})  \nonumber \\
&=& 
\sum_{n \in \Omega_k^0} 
E_\oplus\left(   1_{T_A<T_\oplus, Z(T_A)=n} E_n(1+ {\mathcal N}_A )
\right)  \nonumber  \\
&\leq& 
\left(1+\sup_{n \in \Omega_k^0} E_n({\mathcal N}_A ) \right)
P_\oplus({\mathcal N}_A \geq 1 )\;.
 \label{eq:jour1}
\end{eqnarray}
In view of (\ref{eq:jour0}), where the first term is smaller than the last one, 
it suffices to show that 
$$
\sup_{n \in \Omega_k^0} E_n({\mathcal N}_A ) = {\mathcal O}(e^{-CN})
$$
in order to conclude the proof of the Lemma. In this purpose, use the strong Markov property to write
\begin{eqnarray} 
E_n( {\mathcal N}_A) 
&=&
E_n\left( {\mathcal N}_A 1_{T_\oplus =1} \right) +
E_n\left( {\mathcal N}_A 1_{T_\oplus  \geq 2} \right)  \nonumber \\
&=&
0+
\sum_{m \in \Omega_k^0} 
E_n \left(  
1_{T_A<T_\oplus, Z(T_A)=m}
(1+ E_m {\mathcal N}_A)
\right)  \nonumber \\
&\leq& 
\left(1+\sup_{m \in \Omega_k} E_m({\mathcal N}_A ) \right)
P_n( T_A<T_\oplus)\;.
 \label{eq:jour2}
\end{eqnarray}
Observe also that, for all $n \in \Omega_k$,
\begin{eqnarray} 
 P_n( T_A<T_\oplus)
&\leq&  P_n( T_A=1) +  P_n( T_\oplus>2)
  \nonumber \\
&\leq& (1-p_k)^N+ \sup_{n \in \Omega_k} P_n( T_\oplus>2)  \nonumber \\
&\leq& 2 e^{-CN}
 \label{eq:jour3}
\end{eqnarray}
by (\ref{upb}). Now, the desired result follows from (\ref{eq:jour2}) and (\ref{eq:jour3}),
provided that the supremum in the former estimate is finite. To show this, note that
$ \sup_m P_m( T_\oplus \geq 2 ) \leq (1-p_k)^N $, which implies that
$T_\oplus $ is stochastically dominated by a geometric variable with this parameter. 
Therefore,
 $$ \sup_m E_m ( {\mathcal N}_A ) \leq \sup_m E_m ( T_\oplus )
\leq (1-p_k)^{-N},$$
ending the proof. \qed

\subsection{Proof of Theorem \ref{th:gap}}
\label{proof {th:gap}}
Changing $\xi$ into $(\xi - a)/(a-b)$, we can restrict to the case $a=0, b=-1$. 
Then, for fixed $\eps>0$, we define i.i.d. sequences $\hat \xi_{i,j}(t)$ and $\breve \xi_{i,j}(t)$ by
$$
\hat \xi_{i,j}(t)= -{\bf 1}_{\{\xi_{i,j}(t) \leq -1\} }\;,\qquad 
\breve \xi_{i,j}(t)=  (1+\eps) \sum_{\ell \leq -1} 
\ell {\bf 1}_{\{\xi_{i,j}(t) \in [\ell (1+\eps), (\ell+1)(1+\eps))\}} \;.
$$
Clearly, these variables are integrable since $\xi$ is.
Since $\breve \xi_{i,j}(t)\leq \xi_{i,j}(t) \leq \hat \xi_{i,j}(t)$, the corresponding speeds
are such that 
$$
\breve v_N \leq v_N \leq \hat v_N \;.
$$
From Theorem \ref{th:multinomial}, both $\hat v_N$ and $(1+\eps)^{-1}\breve v_N$ are 
$-(1-p)^{N^2}2^N + o((1-p)^{N^2}2^N)$ as $N \to \8$, which, in addition to the previous 
inequalities, yields
$$
-(1+\eps) \leq  \liminf_{N \to \8}  v_N (1-p)^{-N^2}2^{-N} \leq
\limsup_{N \to \8}  v_N (1-p)^{-N^2}2^{-N} \leq -1.
$$
Letting $\eps \searrow 0$, we obtain the desired claim. \qed

\bigskip




\end{document}